\theoremstyle{plain}
\newtheorem{theorem}[subsection]{Theorem}
\newcommand\Thm[1]{Theorem~\ref{#1}}
\newtheorem{corollary}[subsection]{Corollary}
\newcommand\Cor[1]{Corollary~\ref{#1}}
\newtheorem{lemma}[subsection]{Lemma}
\newcommand\Lem[1]{Lemma~\ref{#1}}
\newtheorem{proposition}[subsection]{Proposition}
\newcommand\Prop[1]{Proposition~\ref{#1}}
\newtheorem*{citedtheorem}{Theorem}
\theoremstyle{definition}
\theoremstyle{remark}
\newtheorem{remark}[subsection]{Remark}
\newcommand\Rem[1]{Remark~\ref{#1}}
\newcommand{\emptyprop}{q}
\newcommand \acf{algebraically closed field}
\newcommand \ann[2]{\operatorname{Ann}_{#1}(#2)}
\newcommand \ch{characteristic}
\newcommand \CM{Coh\-en-Mac\-au\-lay}
\newcommand \complet[1]{\widehat {#1}}
\newcommand \ext[4]{\operatorname{Ext}_{#1}^{#2}(#3,#4)}
\renewcommand \hom [3]{\operatorname{Hom}_{#1}(#2,#3)} 
\newcommand \homo{homomorphism}
\renewcommand\iff{if and only if}%took out commas July 2005
\newcommand\into{\hookrightarrow}
\newcommand \inverse[2]{{#1^{-1}(#2)}}
\newcommand \iso{\cong}
\newcommand \loc{{\mathcal {O}}}
\newcommand \map[1]{{\newcommand{\tmpprop}{#1q}  \if\tmpprop\emptyprop \to\else \xrightarrow{{\phantom{i}{#1}\phantom{i}}}\fi}} 
\newcommand \maxim{\mathfrak m}
\newcommand \nat{\mathbb N}
\newcommand \onto{\twoheadrightarrow}
\newcommand \op\operatorname
\newcommand \pr{\mathfrak p}
\let\sub\subseteq
\newcommand \tensor{\otimes}
\newcommand \tor[4]{\operatorname{Tor}^{#1}_{#2}(#3,#4)}
\newcommand \zet{\mathbb Z}
\newcommand \exactseq [5]{0\to{#1}\:\map{#2}\:{#3}\:\map{#4}\:{#5}\to0}
\newcommand \Exactseq [3]{0\to {#1}\to {#2}\to {#3}\to 0}
\newcommand{\commdiagram}[9][]{%
\begin{equation}
{\newcommand{\tmpprop}{#1q} 
\if\tmpprop\emptyprop \relax\else \label{#1}\fi}
\begin{aligned}%
\mbox{
\begin{picture}(130,90)%
\put(120,70){\vector( 0,-1){50}}%
\put(10,80){\vector( 1, 0){100}}%
\put(0,70){\vector( 0,-1){50}}%
\put(10,10){\vector( 1, 0){100}}%
\put(115,80){\makebox(0,0)[l]{$#4$}}%
\put(5,80){\makebox(0,0)[r]{$#2$}}%
\put(115,10){\makebox(0,0)[l]{$#9$}}%
\put(5,10){\makebox(0,0)[r]{$#7$}}%
\put(-3,50){\makebox(0,0)[r]{$#5$}}
\put(123,50){\makebox(0,0)[l]{$#6$}}
\put(60,3){\makebox(0,0)[c]{$#8$}}
\put(60,88){\makebox(0,0)[c]{$#3$}}
\end{picture}}
\end{aligned}
\end{equation}}
\newcommand\commtrianglefront[7][]{%
\begin{equation}
{\newcommand{\tmpprop}{#1q} 
\if\tmpprop\emptyprop \relax\else \label{#1}\fi}
\begin{aligned}%
\mbox{
\begin{picture}(120,80)%
\put(55,68){\vector(-1,-2){30}}
\put(65,68){\vector(1,-2){30}}
\put(30,5){\vector(1,0){60}}
\put(60,75){\makebox(0,0)[c]{$#2$}}
\put(25,5){\makebox(0,0)[r]{$#4$}}
\put(95,5){\makebox(0,0)[l]{$#6$}}
\put(60,0){\makebox(0,0)[c]{$#5$}}
\put(37,43){\makebox(0,0)[r]{$#3$}}
\put(83,43){\makebox(0,0)[l]{$#7$}}
\end{picture}}
\end{aligned}
\end{equation}}
\newcommand\commtriangleback[7][]{%
\begin{equation}
{\newcommand{\tmpprop}{#1q}
\if\tmpprop\emptyprop \relax\else \label{#1}\fi}
\begin{aligned}%
\mbox{
\begin{picture}(120,80)%
\put(55,70){\vector(-1,-2){30}}
\put(65,70){\vector(1,-2){30}}
\put(30,5){\vector(1,0){60}}
\put(60,75){\makebox(0,0)[c]{$#2$}}
\put(25,5){\makebox(0,0)[r]{$#6$}}
\put(95,5){\makebox(0,0)[l]{$#4$}}
\put(60,0){\makebox(0,0)[c]{$#5$}}
\put(37,43){\makebox(0,0)[r]{$#7$}}
\put(83,43){\makebox(0,0)[l]{$#3$}}
\end{picture}}
\end{aligned}
\end{equation}}
\newcommand\binord[1]{\mathfrak o({#1})}
\newcommand\val[2]{\op{val}_{#1}(#2)}
\newcommand\genchi[1]{\chi^{\text{gen}}(\mathcal {#1})}
\newcommand\matlis[1]{#1^\dagger}
\newcommand\assid[2]{\mathfrak {ass}_{#1}(#2)}
\newcommand\aleq[1]{\preceq_{#1}}
\newcommand\order[2]{\op{ord}_{#1}(#2)}
\newcommand\maxel{maximum}
\newcommand\minel{minimum}
\newcommand\ord{\mathbf O}
\newcommand\low[1]{\op{ord}(#1)}
\newcommand\Ssum{\bigoplus}
\newcommand\ssum{\oplus}
\newcommand\bsum{\mathbin{\bar\oplus}}
\newcommand\tsum{\mathbin{\tilde\oplus}}
\newcommand \len[1]{\op{len}(#1)}
\newcommand \genlen[2]{\ell^{\text{gen}}_{#1}(#2)}
\newcommand \lenmod[2]{\op{len}_{#1}(#2)}
\newcommand \lowlen[1]{\op{Lolen}(#1)}
\newcommand \hilen[1]{\op{Hilen}(#1)}
\newcommand  \sch{Grassmanian}
\newcommand  \hdim{height rank}
\newcommand  \hd[1]{\mathfrak l(#1)}
\newcommand  \odmod[2]{\mathfrak o_{#1}(#2)}
\newcommand  \hdmod[2]{\mathfrak l_{#1}(#2)}
\newcommand \gr[1]{\op{Grass}(#1)}%{#1\sptilde}
\newcommand \grass[2]{\op{Grass}_{#1}(#2)}%{#1\sptilde}
\newcommand \zeroid[1]{\mathbf 0_{#1}}
\newcommand \init[2]{{[#1,#2]}}
\newcommand \term[2]{\init{#2}{#1}} %{\mathstrut_{#1}[#2} 
\title {Semi-addivitity and acyclicity}
\author{Hans Schoutens}
\date\today%{14.12.04}
\address{Department of Mathematics\\
365 5th Avenue\\
the CUNY Graduate Center\\
New York, NY 10016, USA}
\email{hschoutens@citytech.cuny.edu}
\begin{document}
\begin{abstract} We generalize the notion of length to an ordinal-valued invariant defined on the class of finitely generated modules over a Noetherian ring. A key property of this invariant is its semi-additivity on short exact sequences. As an application, we prove some general acyclicity theorems.
\end{abstract}
%\subjclass[2010]{13E05}
%\begin{keyword}
%commutative algebra; ordinal length; acyclicity criteria
%\end{keyword}

\maketitle
%463013187

% start of the document

%new material

\section{Introduction}
%Jordan-H\"older theory ensures that the \emph{length}  $\len M$ of an Artinian, finitely generated module $M$ is  well-defined. From a order-theoretic point of view, $\len M$ is simply the longest chain of submodules in $M$. Since we have the descending chain condition, such a chain is finite, and hence can be viewed as a finite ordinal. 
The \emph{length}  $\len M$ of an Artinian, finitely generated module $M$ is      defined as the longest chain of submodules in $M$. Since we have the descending chain condition, such a chain is finite, and hence can be viewed as a finite ordinal. Hence we can immediately generalize this by transfinite induction to arbitrary Artinian modules, getting an ordinal-valued length function. To remain in the more familiar category of finitely generated modules, observe that at least over a complete Noetherian local ring, the latter is anti-equivalent with the class of Artinian modules via Matlis duality. We could have used this perspective (which we will discuss  in \cite{SchOrdLen}), but a moment's reflection directs us to a simpler solution: just reverse the order. Indeed, if we view the class of all submodules\footnote{Throughout the paper, $R$ is a finite dimensional Noetherian ring and $M,N,\dots$ are  finitely generated $R$-modules.} of $M$, the \emph\sch\ $\grass RM$, as a partially ordered set by reverse inclusion, then $\grass RM$ admits the descending chain condition, and hence any subchain is well-ordered, that is to say an ordinal. We then simply define $\len M$ as the supremum of all such chains/ordinals in $\grass RM$. Viewed as a module over itself, this yields the \emph{length} $\len R$ of a Noetherian ring $R$.

The key property of ordinary length is its additivity on short exact sequences. An example like $\exactseq\zet  2\zet{}{\zet/2\zet}$ immediately shows this can no longer hold in the general transfinite case. Moreover, even the formulation of additivity becomes problematic since ordinal sum is no longer commutative. There does exist a different, commutative sum, called in this paper the \emph{shuffle sum} $\ssum$ (see Appendix~\ref{s:ssum}), which, as our  main result shows, also plays a role:

\begin{citedtheorem}[Semi-additivity, \Thm{T:semadd}]
If $\Exactseq MNQ$ is exact, then $\len Q+\len N\leq \len M\leq \len Q\ssum\len N$.
\end{citedtheorem}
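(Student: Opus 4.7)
The plan is to encode chains of submodules of the middle module via the inclusion $M\into N$ and the projection $\pi\colon N\onto Q$, through the two assignments $N'\mapsto\pi(N')$ and $N'\mapsto N'\cap M$. The algebraic input is the standard ``butterfly'' consequence of exactness: if $N_1\supseteq N_2$ in $\grass R N$ satisfy both $\pi(N_1)=\pi(N_2)$ and $N_1\cap M=N_2\cap M$, then $N_1=N_2$---any $x\in N_1$ differs modulo $M$ from some $y\in N_2$, so $x-y\in M\cap N_1=M\cap N_2\sub N_2$.

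For the lower bound, I would build a long chain in $\grass R N$ by concatenation. Take a chain $C_Q$ in $\grass R Q$ whose order type approaches $\len Q$ and lift via $\pi^{-1}$ to a chain of submodules of $N$ containing $M$, of the same order type (the correspondence theorem preserves strict inclusions). Next take a chain $C_M$ in $\grass R M$ approaching $\len M$ and regard it as a chain in $\grass R N$. Stacking $C_M$ above the lifted $C_Q$ in reverse inclusion---they share the point $M=\pi^{-1}(0)$---produces a well-ordered chain in $\grass R N$ whose order type is the ordinal sum of the two pieces. Passing to suprema delivers the additive lower bound.

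For the upper bound, given any chain $C\sub\grass R N$, consider $\phi\colon C\to\grass R Q\times\grass R M$ defined by $\phi(N')=(\pi(N'),N'\cap M)$. The butterfly lemma says that $\phi|_C$ is strictly monotone in the product partial order: distinct elements of $C$ must differ in at least one coordinate. The two projections of the image are chains in $\grass R Q$ and $\grass R M$ of order type at most $\len Q$ and $\len M$. Appealing to the characterising property of the shuffle sum from Appendix~\ref{s:ssum}---that a well-ordered chain inside the product of two ordinals under the product partial order has order type at most their shuffle sum---yields the upper bound.

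The hard part is the upper bound, specifically the combinatorial fact that chains in a product of two ordinals under the product partial order are shuffle-bounded. Once that appendix lemma is in hand, the rest is a clean application of the butterfly observation; the remaining care is at limit stages of $C$, where one must verify that $\phi$ is compatible with the intersections that define the value of a well-ordered chain at limit ordinals in reverse inclusion.
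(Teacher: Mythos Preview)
Your module-theoretic content is correct and matches the paper: the ``butterfly'' observation is exactly the verification that the map $\phi\colon\grass RN\to\grass RQ\times\grass RM$, $H\mapsto(\pi(H),H\cap M)$, is strictly increasing, and stacking the interval of submodules containing $M$ (isomorphic to $\grass RQ$) below $\grass RM$ inside $\grass RN$ is what underlies \Lem{L:AB}. The gap is that you have framed both halves in terms of \emph{chains}, whereas length is defined via \hdim\ (\S\ref{s:odim}), not as a supremum of chain order types. These two notions coincide for \sch{s}---that is \Thm{T:chain}---but its proof \emph{uses} semi-additivity, so invoking it here would be circular.

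Concretely: for the lower bound you need chains in $\grass RQ$ and $\grass RM$ whose order types approach $\len Q$ and $\len M$, but for a general partial well-order no such chains need exist (see the example preceding \Thm{T:chain}, where $\len P=\omega+1$ yet every chain is finite). For the upper bound, bounding the order type of every chain in $\grass RN$ only bounds the chain length of $\grass RN$, which a priori could be strictly smaller than $\len N$. The remedy is to argue with \hdim\ directly. For the lower bound, \Lem{L:AB} applied to $A=\{H\in\grass RN:H\supseteq M\}$ and $B=\grass RM$ is your concatenation, done rank-wise. For the upper bound, your butterfly argument already shows $\phi$ is strictly increasing on \emph{all} of $\grass RN$ (not merely on one chain), so \Thm{T:inc} gives $\len N=\hdmod{\grass RN}{\zeroid N}\leq\hdmod{\grass RQ\times\grass RM}{\phi(\zeroid N)}$, and the Product Formula (\Thm{T:prod}, rather than an appendix fact about chains in products) finishes.
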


To appreciate the power of this result, notice that we instantaneously recover Vasconcelos' observation that a surjective endomorphism is also injective (see \Cor{C:Vasc} below). Extending this observation, we formulate in \Thm{T:acycl} a general  acyclicity criterion in terms of a certain ``ordinal Euler \ch'' (since we do not have a well-behaved subtraction, this is in fact a pair of two ordinals). 
As an application, we see that $f\colon M\to N$ is injective  whenever $\len{N}=\len{M}\ssum\len{\op{coker}f}$. We also get a new proof and a generalization of a result by Miyata in \cite{Miy}:  any exact sequence of the form  $M\to M\oplus N\to N\to 0$ must be split exact.
%a morphism $f\colon M\to N$ is split, if $N$ is isomorphic as a module to $M\oplus\op{coker}f$. 
The length of a module encodes quite some information of the module. For instance, its degree, that is to say, the highest power $\omega^d$ occurring in $\len M$, is precisely the dimension of $M$ (\Thm{T:dim}). It follows that a Noetherian ring has length $\omega^d$ \iff\ it is a $d$-dimensional domain.  In \cite{SchOrdLen}, we give a formula  expressing  length via (zero-th) local cohomology, but in this paper, we content ourself with proving the following special case: all associated primes of $M$ have the same dimension---that is to say, $M$ is \emph{unmixed}---\iff\ $\len M$ is a monomial of the form $a\omega^d$; moreover, $a$ is then the generic length $\genlen{}M$ and $d$ its dimension (\Thm{T:unm}).
%
%Just using this fact, we could improve some of the results of this paper, as we will mention in separate remarks. 
Since ordinals and ordered structures are not the usual protagonists in commutative algebra, the paper starts with a section on this, and in an appendix, I explain   shuffle sums. In \S\ref{s:mod}, I prove semi-additivity, and the the next sections contain some applications.

\section{Notation and generalities on ordered sets}
  An \emph{ordered}
set $P$ (also called a \emph{partially ordered} set or \emph{poset}), is a set
together with a reflexive, antisymmetric and transitive binary relation
$\leq_P$, called the \emph{ordering} of $P$, and almost always written as 
$\leq$, without a subscript. A partial order is \emph{total} if for any two elements $a,b\in P$ either $a\leq b$ or $b\leq a$. A subset $C\sub P$ is called a \emph{chain}, if its induced order is total.
%The
%\emph{dual} (or
%\emph{reverse order}) of $P$ is the ordering induced by $\geq$ and is
%denoted $P^*$ (where, as always, $b\geq a$ means $a\leq b$).
 If $a\leq b$, then
we may express
this by saying that \emph{$a$ is below $b$};
% or that \emph{$b$ is above $a$}
if $a<b$ (meaning that $a\leq b$ and $a\neq b$), we also say that \emph{$a$ is
strictly below $b$}.
% or that \emph{$b$ is strictly above
%$a$}. 
More generally, for subsets
$A,B\sub P$, we say $A$ is \emph{below} $B$, 
%or $B$ is \emph{above} $A$, 
and
write $A\leq B$, to mean that $a\leq b$ for
all $a\in A$ and all $b\in B$.
% If $A$ or $B$ is a singleton $\{c\}$, we simply
%write $c\leq B$ or $A\leq c$, as the case may be. 

The   \emph{initial closed interval}  determined by $a\in P$ is by definition the set of 
$b\in  P$ with $b\leq a$ and will be denoted   $\init Pa$. Dually, the
 \emph{terminal closed interval} of $a$, denoted  $\term Pa$,
is the collection of all $b\in P$
with  $a\leq b$.

\subsection{Ordinals}\label{s:Ord}
A partial ordering is called a \emph{partial well-order} if it has the
descending chain condition, that is to say, any descending chain must
eventually be constant. A total order is a well-order  if every non-empty
subset has a minimal element.  
%$x_i\leq x_j$. It can be shown that this is equivalent with the fact that any
%extension of the  order on $P$ to a total order on $P$  has to be   well-founded. 

Recall that an \emph{ordinal} is an equivalence class, up to an order-preserving  isomorphism, of a total well-order.  The class of all ordinals is denoted $\ord$;  any bounded subset of $\ord$ has  then  an infimum and a supremum. 
For generalities on ordinals, see any elementary textbook on set-theory (but see also 
Appendix~\ref{s:ssum}   for the notion of shuffle sum). Let me remind the reader of the fact that ordinal sum (see   \S\ref{s:sum} below) is not commutative: $1+\omega\neq\omega+1$ since the former is just $\omega$. We
will adopt the usual notations except for one, where we will reverse the
order. Frankly, the common notation
 for multiplication goes against any (modern) sense of logic, aesthetics or
analogy. Therefore, in this paper, $\alpha\beta$ will simply mean \emph{$\alpha$
copies of $\beta$}, that is to say, $\alpha\beta$ is equal to the lexicographic
ordering on $\alpha\times\beta$. After all, $2\omega$ should mean `two omega', that is to say,   $\omega+\omega$.

All ordinals considered will be less than $\omega^\omega$, and hence can be written uniquely in Cantor normal form  
\begin{equation}\label{eq:Cantor}
\alpha=\sum_{i=0}^da_i\omega^i
\end{equation} 
 with $a_i\in\nat$. We call the least $i$ (respectively, the largest $i$) such that $a_i\neq 0$ the \emph{order} $\order{}\alpha$ (respectively, the \emph{degree} $\deg\alpha$) of $\alpha$; the sum of all $a_i$ is called its \emph{valence} $\val{}\alpha$.
An ordinal $\alpha$ is called a \emph{successor ordinal}  if it has an immediate predecessor, denoted simply $\alpha-1$. This is equivalent with $\order{}\alpha=0$. 
%Nonetheless, we can define (left-)subtraction: if $\alpha\leq\beta$, then there exists a unique ordinal $\gamma$ such that $\alpha+\gamma=\beta$ and we will simply write $\beta-\alpha$ for $\gamma$. Hence, $\alpha-1$ is the predecessor of $\alpha$ if $\alpha$ is a successor ordinal; and equal to $\alpha$ itself, if $\alpha$ is a limit ordinal. 
Given any $e\geq 0$, we will write 
\begin{equation}\label{eq:splitord}
\alpha^+_e:=\sum_{i=e}^d a_i\omega\qquad\text{and}\qquad \alpha_e^-:=\sum_{i=0}^{e-1} a_i\omega^i,
\end{equation} 
where the $a_i$ are given by \eqref{eq:Cantor}. In particular, $\alpha=\alpha^+_e+\alpha_e^-$. We will write $\alpha\aleq e\beta$ (respectively, $\alpha=_e\beta$) if $\alpha^+_e\leq\beta_e^+$ (respectively, $\alpha^+_e=\beta_e^+$). To  make   statements uniform in all $e$, we assign degree $-1$ to $0$, and dimension $-1$ to a zero module. We will need:

\begin{lemma}\label{L:eineq}
Let  $\alpha$ and $\beta$  be ordinals and let $e\in\nat$ be such that $\order{}\alpha\geq e$ and $\alpha=_e\beta$.  If for some ordinal $\lambda$ we have $\alpha+\lambda\leq  \beta$, then $\lambda$ has degree at most $e-1$.
\end{lemma}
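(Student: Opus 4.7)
The plan is to exploit the left-cancellativity of ordinal addition, together with the Cantor normal form decomposition $\beta = \beta^+_e + \beta^-_e$ provided by \eqref{eq:splitord}.

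First I would unpack the two hypotheses on $\alpha$. Since $\order{}\alpha \geq e$, every coefficient $a_i$ of the Cantor normal form of $\alpha$ with $i<e$ vanishes, so $\alpha^-_e=0$ and therefore $\alpha=\alpha^+_e$. The hypothesis $\alpha =_e \beta$ says $\alpha^+_e = \beta^+_e$, so combining these gives $\alpha = \beta^+_e$. Using the decomposition from \eqref{eq:splitord} applied to $\beta$, we get
\[
\beta \;=\; \beta^+_e + \beta^-_e \;=\; \alpha + \beta^-_e.
\]

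The assumption $\alpha + \lambda \leq \beta$ now reads $\alpha + \lambda \leq \alpha + \beta^-_e$. By left-cancellativity of ordinal addition (if $\alpha+\mu \leq \alpha+\nu$ then $\mu \leq \nu$, a standard fact proved by transfinite induction on $\nu$), this yields $\lambda \leq \beta^-_e$. Since $\beta^-_e = \sum_{i=0}^{e-1} a_i\omega^i$ has degree at most $e-1$ by construction, and since the degree function is monotone with respect to $\leq$ on ordinals (comparing Cantor normal forms lexicographically from the top), we conclude $\deg \lambda \leq \deg \beta^-_e \leq e-1$, as required.

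The proof is essentially a one-line bookkeeping argument once the decomposition $\beta = \alpha + \beta^-_e$ is set up, so there is no real obstacle; the only point worth being careful about is left-cancellativity of ordinal sum (which in particular is not replaceable by right-cancellativity, as the non-commutativity example $1+\omega = \omega$ in \S\ref{s:Ord} reminds us), and the monotonicity of $\deg$ under $\leq$. Both are completely standard and can be cited without further comment, or proved in a single line from Cantor normal form if desired.
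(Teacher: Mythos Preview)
Your proof is correct and takes a genuinely different route from the paper. The paper argues by downward induction on a degree index $q$ from some large $d$ down to $e$: assuming $\deg\lambda\leq q$, it computes the coefficient of $\omega^q$ in $\alpha+\lambda$ as $a_q+l_q$, and then uses $\alpha^+_q=\beta^+_q$ together with $\alpha+\lambda\leq\beta$ to force $l_q=0$, so $\deg\lambda\leq q-1$. Running this down to $q=e$ gives the conclusion.

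Your argument bypasses this induction entirely by recognizing at once that $\beta=\alpha+\beta^-_e$, and then invoking left-cancellativity of ordinal addition to obtain $\lambda\leq\beta^-_e$ in one step. This is shorter and conceptually cleaner: it isolates exactly the one non-obvious fact needed (left-cancellation), whereas the paper's induction is effectively re-proving that fact coefficient by coefficient. The paper's approach has the minor advantage of being entirely self-contained at the level of Cantor normal form manipulations, but your citation of left-cancellativity is completely standard and the trade-off clearly favors your version.
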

\begin{proof}
We use the notation from \eqref{eq:splitord}. By assumption, $\alpha=\alpha^+_e=\beta^+_e$. 
Let $d$ be   larger than the degree of any of the ordinals involved.  For $e\leq q\leq d$, we prove by downward induction on $q$ that $\deg\lambda\leq q$. The case $q=d$ is clear, so assume $\deg\lambda\leq q$. Write $\alpha=\sum_{i\geq e} a_i\omega^i$ and $\lambda=\sum_{i=0}^ql_i\omega^i$. The coefficient of $\omega^q$ in $\alpha+\lambda$ is equal to $a_q+l_q$, and since $\alpha^+_q=\beta^+_q$, this is at most $a_q$, proving that $l_q=0$. 
\end{proof}
%
%\begin{lemma}\label{L:eineq}
%Let  $\alpha$ be an ordinal of order at least $e$ and  $\beta $   an ordinal of degree at most $e-1$. If for some ordinal $\lambda$ we have $\alpha+\lambda\leq \alpha+\beta$, then $\lambda$ has degree at most $e-1$.
%\end{lemma}
%\begin{proof}
%Let $d$ be the degree of $\alpha$.  For $e\leq q\leq d$, we prove by downward induction on $q$ that $\deg\lambda\leq q$. Write $\alpha=\sum_{i\geq e} a_i\omega^i$ and $\lambda=\sum_{i=0}^ql_i\omega^i$. Since $\deg\lambda\leq q$,  the coefficient of $\omega^q$ in $\alpha+\lambda$ is equal to $a_q+l_q$, and since $\deg\beta<e\leq q$, this is at most $a_q$, proving that $l_q=0$. 
%\end{proof}

% Given two ordinals $\alpha$ and $\beta$, let us say that $\alpha$ is
%\emph{\short\ than} $\beta$, written $\alpha\ll
%\beta$, if
%$\alpha+\beta=\beta$ (caveat: sometimes this notation is used to mean
%\emph{$\alpha$ meshes with $\beta$}, which is a weaker notion). 
%%In other words, $\alpha$ is \short\ than $\beta$ if $\beta-\alpha=\beta$. 
%For instance $0$ is
%\short\
%than any ordinal,  any finite ordinal is \short\ than $\omega$, and $\omega$ is \short\ than $\omega^2$, but  not \short\ than $2\omega$.

\subsection{The length of a partial well-order}\label{s:odim} 

Let $P$ be a partial well-order. We define 
%the length of $P$, denoted $\len P$, as the supremum of all chains $C\sub P$, where we view each $C$, being a total well-order, as an ordinal.  Alternatively, define the 
    the \emph{\hdim} $\hdmod P \cdot$ on $P$ by transfinite induction as follows: at successor stages, we say that $\hdmod P a\geq\alpha+1$, if there exists $b\leq a$ with $\hdmod P b\geq\alpha$, and at limit stages, that $\hdmod Pa\geq\lambda$, if there exists for each $\alpha<\lambda$ some $b_\alpha\leq a$ with $\hdmod P{b_\alpha}\geq\alpha$. We then say that $\hdmod Pa=\alpha$ if $\hdmod Pa\geq\alpha$ but not $\hdmod Pa\geq \alpha+1$. In particular, $\hdmod Pa=0$ \iff\ $a$ is a minimal element of $P$. For a subset $A\sub P$, we set $\hdmod PA$ equal to the supremum of all $\hdmod Pa$ with $a\in A$. Finally, we define the \emph{(ordinal) length} of $P$ as $\len P:=\hdmod PP$. In  particular, if $P$ has a \maxel\ $\top$, then $\len P=\hdmod P\top$.

\begin{lemma}\label{L:AB}
Let $P$ be a  partial well-order and let 
$A,B\sub P$ be subsets. If $A\leq B$, then 
\begin{equation}
\label{eq:AB}
\hdmod AA+\hdmod BB \leq \hdmod PB
\end{equation}
\end{lemma}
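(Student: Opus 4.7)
The plan is to first establish the pointwise refinement
\[
\hdmod AA + \hdmod Bb \;\leq\; \hdmod Pb \qquad \text{for every } b\in B,
\]
and then take the supremum over $b\in B$. The sup step uses that $\hdmod PB=\sup_{b\in B}\hdmod Pb$ by definition, together with continuity of ordinal addition in its right argument, which gives $\sup_{b\in B}\bigl(\hdmod AA+\hdmod Bb\bigr)=\hdmod AA+\sup_{b\in B}\hdmod Bb=\hdmod AA+\hdmod BB$.

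The pointwise inequality will be proved by transfinite induction on $\hdmod Bb$. In the base case $\hdmod Bb=0$ I need $\hdmod AA\leq\hdmod Pb$, and this follows from two quick monotonicities. First, for every $a\in A$ one has $\hdmod Aa\leq\hdmod Pa$; this is a one-line induction on $\hdmod Aa$, since a strictly descending $A$-chain below $a$ is \emph{a fortiori} a strictly descending $P$-chain below $a$. Second, since $A\leq B$, each $a\in A$ satisfies $a\leq b$ in $P$, and so $\hdmod Pa\leq\hdmod Pb$. Combining the two inequalities and taking the supremum over $a\in A$ yields $\hdmod AA\leq\hdmod Pb$.

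For the successor step, if $\hdmod Bb\geq\alpha+1$ then by the defining clause some $b'<b$ in $B$ has $\hdmod B{b'}\geq\alpha$; the induction hypothesis gives $\hdmod P{b'}\geq\hdmod AA+\alpha$, and since $b'<b$ strictly in $P$ as well, $\hdmod Pb\geq\hdmod P{b'}+1\geq\hdmod AA+\alpha+1$. For a limit $\lambda$, if $\hdmod Bb\geq\lambda$ then for every $\alpha<\lambda$ there is $b_\alpha\leq b$ in $B$ with $\hdmod B{b_\alpha}\geq\alpha$; the induction hypothesis yields $\hdmod P{b_\alpha}\geq\hdmod AA+\alpha$, which is at most $\hdmod Pb$ because $b_\alpha\leq b$. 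Taking the supremum over $\alpha<\lambda$ and using right-continuity of ordinal addition gives $\hdmod Pb\geq\hdmod AA+\lambda$, completing the induction.

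I do not anticipate a serious obstacle. The one piece of bookkeeping to keep straight is the asymmetry between the \emph{strict} inequality $b'<b$ needed to harvest the extra "$+1$" in the successor clause and the \emph{non-strict} $b_\alpha\leq b$ that suffices at limits, along with the three distinct rank functions $\hdmod A\cdot$, $\hdmod B\cdot$, and $\hdmod P\cdot$ involved. Once these are handled, the only non-formal input is the monotone-continuity of $\gamma+{\cdot}$, invoked both in the limit step of the induction and in the final passage to suprema.
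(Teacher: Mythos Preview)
Your proof is correct and follows essentially the same route as the paper's own argument: reduce to the pointwise inequality $\hdmod AA+\hdmod Bb\leq\hdmod Pb$, prove it by transfinite induction on $\hdmod Bb$ with the base case handled via $\hdmod Aa\leq\hdmod Pa\leq\hdmod Pb$, and then pass to suprema. The only difference is cosmetic: you make the right-continuity of $\gamma+\cdot$ explicit in the limit step and in the final sup, whereas the paper leaves these implicit.
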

\begin{proof}
Let $\alpha:=\hdmod AA=\len A$. Since  $\hdmod PB$ is the supremum of all $\hdmod Pb$ with $b\in B$, it suffices to show   that
\begin{equation}\label{eq:Ab}
\alpha+\hdmod Bb \leq  \hdmod Pb.
\end{equation}
We will prove \eqref{eq:Ab} by induction on $\beta:=\hdmod Bb$. Assume
first that $\beta=0$. Let $\theta:=\hdmod PA$. Since $\alpha$
is the supremum of all $\hdmod Aa$ for $a\in
A$, and since $\hdmod Aa\leq\hdmod Pa$, we get $\alpha\leq\theta$. Since $A\leq b$, we have
$\theta\leq\hdmod Pb$, and hence we are done in this case.

Next,  assume   $\beta$ is a successor ordinal, and denote its predecessor by $\beta-1$. By definition, there exists $b'\in B$ below $b$ such that $\hdmod B{b'}\geq\beta-1$.
By induction,
we get $\hdmod  P{b'}\geq \alpha+\beta-1$. This in turn shows that
$\hdmod Pb$ is at least $\alpha+\beta$. Finally, assume $\beta$ is a limit
ordinal. Hence for each $\gamma<\beta$, there
exists
$b_\gamma\in B$ below $b$ such that $\hdmod B{b_\gamma}=\gamma$. By induction,
$\hdmod P{b_\gamma}\geq\alpha+\gamma\leq \alpha+\beta$ and hence also  $\hdmod Pb\geq \alpha+\beta$.
\end{proof}

\subsection{Sum Orders}\label{s:sum}
By the  \emph{sum} $P+Q$ of two  partially ordered sets $P$ and $Q$, we mean the partial order induced on their disjoint union $P\sqcup Q$ by declaring any element in $P$ to lie below any element in $Q$. In fact, if $\alpha$ and $\beta$ are ordinals, then their sum is just $\alpha\sqcup\beta$, customarily denoted $\alpha+\beta$.
% is defined as follows.
%If $Q$ has no \minel, then $P+Q:=P\sqcup Q$, the disjoint union, that is to say, any element in $P$ lies below any element in $Q$. If $Q$ has a
%\minel, then $P+Q:=P\sqcup (Q-\{\bot_Q\})$. To treat both cases simultaneously,
%we will think of  $\bot_Q$ as an element which does not
%belong to $Q$ if $Q$ has no \minel. 
We may represent elements
in the disjoint union $P\sqcup Q$ as pairs $(i,a)$ with $i=0$ if $a\in P$
and $i=1$ if $a\in Q$. The
ordering  $P+ Q$ is  then the lexicographical
ordering on such pairs, that is to say, $(i,a)\leq (j,b)$ if $i<j$ or if $i=j$
and $a\leq b$. 
%Put differently, $P+ Q$ is the ordering obtained by
%`putting $P$ below $Q$', when $Q$ has no \minel, and otherwise, by
%`replacing $\{\bot_Q\}$ by $P$'. In particular,
%this notation is consistent with the ordinal sum notation $\alpha+\beta$. 
%\comment{No, it is not!}
%Also
%note that $Q+P$ does not have to be order isomorphic with $P+Q$.

\begin{proposition}\label{P:sum}
If $P$ and $Q$ are partial well-orders, then so is $P+Q$.
If $P$ has moreover a \maxel, then 
$\len{P+Q}=\len P+\len Q$.
\end{proposition}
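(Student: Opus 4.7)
The plan is to handle the two assertions of the proposition separately.

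For the first claim—that $P+Q$ inherits the descending chain condition—the key observation is that any descending chain in $P+Q$ eventually lies entirely in $P$. Indeed, once a term of the chain belongs to $P$, all subsequent terms must also be in $P$, since no element of $P$ lies above an element of $Q$ by construction. Hence any descending chain decomposes as a (possibly empty) initial segment in $Q$ followed by a tail in $P$; DCC for each of $P$ and $Q$ separately then forces the whole chain to stabilize.

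For the length identity, I would prove the two inequalities separately. The lower bound $\len P + \len Q \leq \len{P+Q}$ is immediate from \Lem{L:AB} applied to the ambient poset $P+Q$ with the subsets $A := P$ and $B := Q$: since every element of $P$ lies below every element of $Q$, the hypothesis $A \leq B$ holds, so $\hdmod P P + \hdmod Q Q \leq \hdmod{P+Q}Q \leq \len{P+Q}$.

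For the reverse inequality, I plan to compute $\hdmod{P+Q} a$ for every $a \in P + Q$, exploiting the maximum $\top$ of $P$. For $b \in P$, the set $\{a \in P+Q : a < b\}$ coincides with $\{a \in P : a < b\}$, so a straightforward transfinite induction yields $\hdmod{P+Q}b = \hdmod P b$, in particular bounded by $\hdmod P \top = \len P$. For $x \in Q$, I would argue by transfinite induction on $\hdmod Q x$ that $\hdmod{P+Q}x \leq \len P + \hdmod Q x$: the base case uses that all of $P$ lies below $x$, but each $b \in P$ contributes at most $\hdmod P \top = \len P$ thanks to the existence of $\top$; the successor and limit steps follow by unwinding the inductive clauses of the definition of $\hdmod{}$ and noting that ordinal sum is strictly increasing in its second argument. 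Taking the supremum over $x \in Q$ then gives $\len{P+Q} \leq \len P + \len Q$.

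The main obstacle is the induction for elements of $Q$, where one must be careful about how the ``jump'' across the $P$-$Q$ boundary fits into the ordinal sum; the role of the maximum hypothesis on $P$ is precisely to cap the $P$-contribution at $\len P$, so that the $Q$-contribution above it assembles cleanly as an ordinal sum rather than overshooting.
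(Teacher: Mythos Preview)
Your argument for the descending chain condition is fine, and invoking \Lem{L:AB} for the inequality $\len P+\len Q\leq\len{P+Q}$ is a clean shortcut; your plan for the reverse inequality---transfinite induction on $\hdmod Q x$ after first checking that elements of $P$ retain their $P$-rank in $P+Q$---is essentially the paper's approach as well.

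The base case of that induction, however, does not go through, and this reflects an off-by-one defect in the stated identity. Take $x\in Q$ minimal. The elements of $P+Q$ strictly below $(1,x)$ are exactly the $(0,b)$ with $b\in P$, and you have already shown $\hdmod{P+Q}{(0,b)}=\hdmod P b\leq\pi:=\len P$. But then
\[
\hdmod{P+Q}{(1,x)}=\sup_{b\in P}\bigl(\hdmod P b+1\bigr)=\pi+1,
\]
because the supremum is \emph{attained} at $b=\top_P$; your claimed bound $\hdmod{P+Q}{(1,x)}\leq\pi$ is therefore false. Concretely, if $P$ and $Q$ are both one-point posets then $\len P=\len Q=0$, while $P+Q$ is a two-element chain of length $1\neq 0+0$. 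The paper's own proof has the same slip in its $\alpha=0$ case: it dismisses a rank-$\pi$ witness $(j,b)$ strictly below $(1,a)$ by claiming this would force $\hd{(0,\top_P)}>\pi$, but the choice $(j,b)=(0,\top_P)$ produces no such contradiction. What your induction actually establishes, once the extra $+1$ is carried through, is $\len{P+Q}=(\len P+1)+\len Q$ for nonempty $P$ with a maximum.
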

\begin{proof}
We leave it as an exercise to show that $P+ Q$ is a partial well-order.  
Let $\pi:=\hdmod P{\top_P}= \len P$. For a pair $(i,a)$ in  $P+ Q$, let $\nu(i,a)$ be
equal to $\hdmod  Pa$ if $i=0$
and to $\pi+\hdmod Qa$ if $i=1$. The assertion will follow once we showed that
$\nu(i,a)=\hd{i,a}$, for all $(i,a)\in P+Q$, where we wrote $\hd{i,a}$ for $\hdmod {P+Q}{i,a}$. We use transfinite induction. If $i=0$, that is
to say, if $a\in P$, then the claim is easy to check, since no element from $Q$
lies below $a$. So we may assume $i=1$ and $a\in Q$.  Let $\alpha:=\hdmod  Qa$
and suppose first that $\alpha=0$. Since any element of $P$ lies below $a$, in
any case $\pi\leq\hd{i,a}$. If this were strict, then there would be
an element  $(j,b)$ below $(i,a)$ of \hdim\ $\pi$.
Lest
$\hd{0,\top_P}$ would be bigger than $\pi$, we must have $j=1$ whence $b\in Q$. Since $b\leq_Q a$, we get  $\hdmod Q a\geq1$, contradiction.
  This
concludes the case $\alpha=0$, so  assume $\alpha>0$. We leave the limit case to
the reader and assume moreover that $\alpha$ is a successor ordinal. Hence there
exists some  $b\in Q$   below $a$    with $\odmod Q{b}=\alpha-1$. By
induction, $\hd{1,b}=\nu(1,b)=\pi+\alpha-1$, and hence 
$\hd{1,a}\geq\pi+\alpha$. By a similar argument as above, one then easily
shows that this must in fact be an equality, as we wanted to show.
\end{proof}

\subsection{Product Orders}\label{s:prod}
The \emph{product} of two  partially ordered sets $P$ and $Q$ is defined to be
the Cartesian product $P\times Q$ ordered by the rule $(a,b)\leq (a',b')$
\iff\ $a\leq a'$ and $b\leq b'$. Note that this is not the lexicographical
ordering, even if  $P$ and $Q$ are total orders. The map $(a,b)\mapsto (b,a)$
is an order-preserving bijection between $P\times Q$ and $Q\times P$. It is
easy to check that  if both $P$ and $Q$ are partial well-orders, then so is $P\times
Q$. 
%If $P$ and $Q$ have a \minel\ $\bot_P$ and $\bot_Q$, or have a \maxel\ 
%$\top_P$ and $\top_Q$, then so does $P\times Q$, to wit the pairs
%$(\bot_P,\bot_Q)$  and $(\top_P,\top_Q)$ respectively.

For the next result, we make use of the ordinal sum $\ssum$ defined in
Appendix~\ref{s:ssum} below. 
%Our terminology
%is motivated because roughly speaking, it is the largest possible ordinal one
%can get from reshuffling finitely many pieces of the two original ordinals.
%The next result is the $\aleph_0$-analogue of  the corresponding result for \hdim\  in \cite[4.8.3]{Frai}.
%

\begin{theorem}[Product Formula]\label{T:prod}
Given partial well-orders $P$ and $Q$, we have an equality $\len{P\times Q}= \len P\ssum\len Q$.
\end{theorem}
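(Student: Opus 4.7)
The plan is to prove the Product Formula by first establishing a pointwise rank identity and then taking a supremum. I will show by transfinite induction that
\[\hdmod{P \times Q}{(a,b)} \;=\; \hdmod P a \ssum \hdmod Q b\]
for every $(a,b) \in P \times Q$, and then deduce the global equality $\len{P \times Q} = \len P \ssum \len Q$ by taking the supremum over all pairs.

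For the pointwise identity, I induct on $\alpha \ssum \beta$, where $\alpha := \hdmod P a$ and $\beta := \hdmod Q b$. The key technical fact is that in any partial well-order, $c < c'$ forces the height rank of $c$ to be strictly less than that of $c'$---a direct consequence of the recursive definition. Combined with strict monotonicity of the shuffle sum in each argument, this gives the upper bound: any strict predecessor $(a',b') < (a,b)$ in the product has $a' \leq a$ and $b' \leq b$ with at least one inequality strict, whence $\hdmod P{a'} \ssum \hdmod Q{b'} < \alpha \ssum \beta$; by the inductive hypothesis, $\hdmod{P \times Q}{(a', b')} < \alpha \ssum \beta$, forcing $\hdmod{P \times Q}{(a, b)} \leq \alpha \ssum \beta$. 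For the lower bound, I use the recursive characterization of $\ssum$ from Appendix~\ref{s:ssum}: every $\gamma < \alpha \ssum \beta$ satisfies $\gamma \leq \alpha' \ssum \beta$ for some $\alpha' < \alpha$, or symmetrically $\gamma \leq \alpha \ssum \beta'$ for some $\beta' < \beta$. Given such $\gamma$, the defining property of $\hdmod P a$ yields some $a' < a$ with $\hdmod P{a'} \geq \alpha'$; then $(a',b) < (a,b)$ has rank at least $\alpha' \ssum \beta \geq \gamma$ in the product by the inductive hypothesis, which pushes $\hdmod{P \times Q}{(a,b)}$ above $\gamma$.

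Finally, taking the supremum over $(a,b)$ yields $\len{P \times Q} = \sup_{(a,b)}(\hdmod P a \ssum \hdmod Q b)$. The main remaining obstacle is converting this joint supremum into $\len P \ssum \len Q = (\sup_a \hdmod P a) \ssum (\sup_b \hdmod Q b)$, which requires a continuity or cofinality property of $\ssum$ to be drawn from Appendix~\ref{s:ssum}, particularly in the subtle case where one or both of $\len P, \len Q$ is a non-attained limit ordinal.
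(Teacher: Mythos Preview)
Your pointwise argument establishing $\hdmod{P\times Q}{(a,b)}=\hdmod Pa\ssum\hdmod Qb$ is essentially the paper's own proof, only organized around induction on $\alpha\ssum\beta$ rather than on the pair $(a,b)$; the two are interchangeable, and your use of the recursive description of $\ssum$ (via the $\bsum$ form in Theorem~\ref{T:ssum}) is exactly how the paper handles the lower bound.

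Where you diverge from the paper is in flagging the passage from the pointwise identity to the global one, and here your caution is well placed---indeed better placed than the paper's. The paper simply writes ``from which the assertion follows by taking suprema,'' but the continuity property you hope to extract from the appendix does not exist, because the stated theorem is false for arbitrary partial well-orders. Take $P=Q$ to be the disjoint union of finite chains of every length $n\geq 1$: then every element has finite height rank, so $\len P=\omega$ and, by the pointwise formula, every $(a,b)\in P\times Q$ has finite rank, giving $\len{P\times Q}=\omega$; yet $\len P\ssum\len Q=\omega\ssum\omega=2\omega$. Thus $\sup_{a,b}(\hdmod Pa\ssum\hdmod Qb)$ genuinely fails to equal $(\sup_a\hdmod Pa)\ssum(\sup_b\hdmod Qb)$ when the individual suprema are not attained. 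The result is only correct under the additional hypothesis that $P$ and $Q$ have maxima (or, more generally, that $\len P$ and $\len Q$ are realized as height ranks of elements), in which case the pointwise identity at the pair of maxima gives the conclusion directly with no limiting argument needed. This hypothesis holds in every application in the paper, since the Grassmannian $\grass RM$ always has $\zeroid M$ as its maximum; compare the explicit maximum hypothesis in Proposition~\ref{P:sum}. So your instinct was right: the obstacle you identify is real and cannot be removed without strengthening the hypotheses.
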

\begin{proof}
We prove the more general fact that   
\begin{equation}\label{eq:odimprod}
\hd{a,b}=\hd a\ssum\hd b 
\end{equation}
 for all
$a\in P$ and $b\in Q$, from which the assertion follows by taking suprema over
all elements in $P$ and $Q$. Note that we have not written superscripts   to denote on which ordered set
the \hdim\ is calculated since this is clear
from the context. To prove \eqref{eq:odimprod}, we may assume by
transfinite induction that it holds for all pairs $(a',b')$ strictly below $(a,b)$.
%Moreover, to simplify the proof
%we restrict to the case
%that $P$ is a lattice and leave the general case as an exercise.
Put $\alpha:=\hd a$, $\beta:=\hd b$ and $\gamma:=\hd{a,b}$. Since
$\hd{a,b}=\hd{b,a}$, via the isomorphism $P\times Q\iso Q\times P$, we may
assume that $\low\alpha\leq\low\beta$ whenever this assumption is required
(namely, when dealing with limit ordinals). Here $\low\alpha$ denotes the \emph{order} of $\alpha$, that is to say, the 
lowest exponent in the Cantor normal form; see
\S\ref{s:ssum} for details. 

We
start with proving the
inequality $\alpha\ssum\beta\leq\gamma$. If   $\alpha=\beta=0$ then  
$a$ and $b$ are minimal elements in respectively $P$ and $Q$, whence so is $(a,b)$ in $P\times Q$, that is to say,  $\gamma=0$. So we may
assume,
after perhaps exchanging $P$ with $Q$ that $\alpha>0$. Suppose $\alpha$ is a successor
ordinal. Hence there exists $a'<a$  in $P$ 
with $\hd{a'}=\alpha-1$.
By induction, $\hd{a',b}=(\alpha-1)\ssum\beta$ and hence 
$\gamma=\hd{a,b}$ is at least $((\alpha-1)\ssum\beta)+1=\alpha\ssum\beta$,
where the last equality follows from Theorem~\ref{T:ssum}.
If $\alpha$
is a limit ordinal, then there exists for each $\delta<\alpha$ an element
$a_\delta<a$ of \hdim\ $\delta$. By
induction
$\hd{a_\delta,b}=\delta\ssum\beta$ and hence $\hd{a,b}$ is at least
$\alpha\ssum \beta$ by  Theorem~\ref{T:ssum}.  This concludes the
proof that $\alpha\ssum\beta\leq \gamma$. 

For the converse inequality, assume first that $\gamma$ is a
successor ordinal. By definition, there exists $(a',b')<(a,b)$ in $P\times Q$
of \hdim\   $\gamma-1$.
By induction, $\gamma-1=\hd{a',b'}=\hd {a'}\ssum\hd{b'}$, from which we get $\gamma\leq \alpha\ssum\beta$. A similar argument can be
used to treat the limit case and the details are left to the reader.
\end{proof}

%Unless we   say differently, we always assume that a product $P\times Q$ is
%ordered via the product order. Similarly, the $n$-fold Cartesian power
%$P\times\dots\times P$ of $P$ is endowed with the product order and is denoted
%$P^n$. 
%
%
%c

\subsection*{Increasing functions}
%\label{s:inc}
We conclude this section with the behavior of \hdim\   under an increasing
function. Let $f\colon P\to Q$ be an increasing (=order-preserving) map
between ordered sets. We say that $f$ is
\emph{strictly
increasing}, if $a<b$ then $f(a)<f(b)$. For instance, an increasing, injective  map  is strictly increasing.
%
%, and we say that it is   an
%\emph{embedding}, if $a\leq b$
%\iff\
%$f(a)\leq f(b)$. An embedding, and more generally,  

\begin{theorem}\label{T:inc}
Let $f\colon P\to Q$ be a strictly increasing map between partial well-orders. If $P$ has  a \minel\  $\bot_P$, then  
\begin{equation*}
\hdmod Q{f(\bot_P)}+\hdmod Pa\leq \hdmod Q{f(a)}.
\end{equation*}
for all    $a\in P$.
\end{theorem}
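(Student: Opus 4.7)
The plan is to prove a slightly stronger uniform statement by transfinite induction on an ordinal $\alpha$:
\begin{equation*}
\text{for every } a \in P \text{ with } \hdmod Pa \geq \alpha, \quad \hdmod Q{f(a)} \geq \hdmod Q{f(\bot_P)} + \alpha.
\end{equation*}
Specializing to $\alpha := \hdmod Pa$ then gives the theorem. Casting the induction in the form ``$\hdmod Pa \geq \alpha$'' (rather than ``$\hdmod Pa = \alpha$'') avoids having to separately track whether $\hdmod Pa$ happens to equal the current stage and makes the limit case painless.

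The base case $\alpha = 0$ is immediate: since $\bot_P$ is the minimum of $P$ we have $\bot_P \leq a$, so $f(\bot_P) \leq f(a)$ by monotonicity of $f$. A straightforward transfinite induction (which I would record as an auxiliary observation, since it is used again in the induction step) shows that $\hdmod Q\cdot$ is itself monotone on $Q$, so $\hdmod Q{f(\bot_P)} \leq \hdmod Q{f(a)}$.

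For a successor $\alpha = \gamma + 1$, the definition of $\hdmod Pa \geq \gamma+1$ supplies some $b < a$ in $P$ with $\hdmod Pb \geq \gamma$. The inductive hypothesis gives $\hdmod Q{f(b)} \geq \hdmod Q{f(\bot_P)} + \gamma$, and the hypothesis that $f$ is \emph{strictly} increasing yields $f(b) < f(a)$ in $Q$. Hence, directly from the definition of \hdim\ at a successor stage,
\begin{equation*}
\hdmod Q{f(a)} \geq \hdmod Q{f(b)} + 1 \geq \hdmod Q{f(\bot_P)} + \gamma + 1 = \hdmod Q{f(\bot_P)} + \alpha.
\end{equation*}
For a limit ordinal $\alpha = \lambda$, for each $\delta < \lambda$ pick $b_\delta < a$ with $\hdmod P{b_\delta} \geq \delta$, so that the inductive hypothesis yields $\hdmod Q{f(b_\delta)} \geq \hdmod Q{f(\bot_P)} + \delta$; since $f(b_\delta) < f(a)$ we again obtain $\hdmod Q{f(a)} \geq \hdmod Q{f(\bot_P)} + \delta + 1$ for every $\delta<\lambda$. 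Taking the supremum and using the right-continuity of ordinal addition, $\sup_{\delta<\lambda}(\hdmod Q{f(\bot_P)}+\delta+1) = \hdmod Q{f(\bot_P)}+\lambda$, completes the limit step.

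There is no real obstacle here; the one point that must not be glossed over is the role of \emph{strict} monotonicity of $f$, which is used precisely to upgrade $b<a$ in $P$ to the strict inequality $f(b)<f(a)$ in $Q$ that the definition of \hdim\ requires in both the successor and limit stages. Without strictness, $f(b)$ and $f(a)$ could coincide and the induction would collapse.
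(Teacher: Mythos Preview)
Your proof is correct and follows essentially the same transfinite induction as the paper's (which likewise treats the successor step explicitly and leaves the limit case to the reader). One small wrinkle: in the limit step the definition of $\hdmod Pa\geq\lambda$ only furnishes $b_\delta\leq a$, not $b_\delta<a$; but your ``$\hdmod Pa\geq\alpha$'' formulation actually makes the limit case even simpler than you wrote, since $\hdmod Pa\geq\lambda$ immediately gives $\hdmod Pa\geq\delta$ for every $\delta<\lambda$, so the induction hypothesis applies to $a$ itself and no witnesses $b_\delta$ are needed.
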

\begin{proof}
From the context, it will  be clear in which ordered set we calculate
the rank and hence we will drop the superscripts.   Let $\gamma:=\hd{f(\bot)}$.  We
   induct  on $\alpha:=\hd a$, where the case
$\alpha=0$ holds trivially.    We  leave the limit case to the reader and  
assume that $\alpha$
is a successor ordinal. By definition, there exists $b<a$
with $\hd{b}=\alpha-1$. By induction, the \hdim\ of
$f(b)$ is at least $\gamma+\alpha-1$. By assumption,  $f(b)<f(a)$, showing that  $f(a)$ has \hdim\ at least $\gamma+\alpha$. 
\end{proof}

Even in the absence of a \minel, the inequality   still holds,  upon replacing the first ordinal in the formula by the minimum of the ranks
of all $f(a)$ for $a\in P$. In particular, \hdim\  
always
increases.

\section{Semi-additivity}\label{s:mod}
 Let $R$ be a ring and $M$ a Noetherian $R$-module.
The \emph\sch\ of $M$ (over $R$) is by definition the collection $\grass  RM$ of all submodules
of $M$, ordered  by reverse inclusion.  Note that this is a natural generalization
of the notion of a \sch\ of a vector space $V$ over a field $K$. The \hdim\ of $\gr M$ will be called the \emph{length} $\lenmod RM$ of $M$ as
an $R$-module. This is well-defined, since   $\grass RM$ is   a well-partial order.
%\begin{equation}
%\label{eq:rhoodim}
%\hd I (=\hdmod{\gr R} I)=\len{R/I}
%\end{equation}
Thus, for   $N\sub M$, we have $\hd N\geq \alpha+1$, if there exists $N'$   containing $N$ with $\hd{N'}\geq\alpha$. 
 Since the initial closed interval 
$\init {\gr M}N$   is isomorphic to $\grass R {M/N}$, we get
 \begin{equation}\label{eq:quot}
 \hd N=\hdmod {\grass R M}N=\hdmod{\grass R{M/N}}{\zeroid {M/N}}=\lenmod R {M/N}
 \end{equation}
and hence in particular $\hd{\zeroid M}=\lenmod R M$, where $\zeroid M$ denotes the
zero module of $M$. Similarly, $\term N{\grass R M}$ consists of all submodules of $M$ contained in $N$, whence is equal to $\grass R N$. Note that if $I$ is an ideal in the annihilator of $M$, then $\grass RM=\grass {R/I}M$, so that in order to calculate the length or the order dimension of $M$, it makes no difference whether we view it as an $R$-module or as an $R/I$-module. We call the \emph{length} of $R$, denoted $\len R$, its length when viewed as a module over itself. 
 Hence, the length  of $R/I$ as an $R$-module is the same as that of $R/I$ viewed as a ring. We define the \emph{order}, $\order RM$, and \emph{valence}, $\val RM$, as the respective order and valence of $\lenmod RM$.

\begin{theorem}[Semi-additivity]\label{T:semadd}
If $\Exactseq NMQ$ is an exact sequence of Noetherian $R$-modules, then 
\begin{equation}\label{eq:lensemadd}
  \lenmod R Q+\lenmod R N\leq \lenmod R M\leq \lenmod R Q\ssum \lenmod R N
\end{equation}
Moreover, if the sequence is split, then the last inequality is an equality.
\end{theorem}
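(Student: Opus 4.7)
The plan is to extract from the exact sequence two natural subposets of $\grass R M$ (equipped with reverse inclusion), and then feed them into \Lem{L:AB} for the lower bound and into \Thm{T:inc} (combined with the Product Formula, \Thm{T:prod}) for the upper bound.

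For the lower bound, let $A := \init{\grass R M}N$ be the subposet of submodules of $M$ containing $N$; the assignment $N' \mapsto N'/N$ identifies $A$ with $\grass R Q$, so $\hdmod A A = \len Q$. Let $B := \term{\grass R M}N = \grass R N$, so $\hdmod B B = \len N$. In reverse inclusion every element of $A$ lies below $N$ and every element of $B$ lies above $N$, hence $A \leq B$, and \Lem{L:AB} then yields $\len Q + \len N \leq \hdmod{\grass R M} B \leq \len M$.

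For the upper bound, consider the map
\[
\phi \colon \grass R M \to \grass R Q \times \grass R N, \qquad N' \mapsto \bigl((N'+N)/N,\, N' \cap N\bigr).
\]
It is clearly order-preserving; the crucial point is that it is strictly increasing. If $N' \supsetneq N''$ but $\phi(N') = \phi(N'')$, then for $x \in N'$ the equality $N' + N = N'' + N$ lets us write $x = x'' + n$ with $x'' \in N''$ and $n \in N$; then $n = x - x'' \in N' \cap N = N'' \cap N \subseteq N''$, so $x \in N''$, contradicting the strict inclusion. Since $\phi$ sends the minimum $M \in \grass R M$ to $(Q,N)$, the minimum of the product, \Thm{T:inc} reduces to $\hdmod{\grass R M} a \leq \hdmod{\grass R Q \times \grass R N}{\phi(a)}$ for every $a$. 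Taking $a$ to be the zero submodule and invoking \Thm{T:prod} gives $\len M \leq \len Q \ssum \len N$.

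For the split case, fix a decomposition $M = N \oplus Q_0$ with $Q_0$ mapping isomorphically onto $Q$, and consider the reverse map $\psi \colon \grass R Q \times \grass R N \to \grass R M$, $(Q',N') \mapsto Q' + N'$. Since $Q_0 \cap N = 0$, the modular law shows $\psi$ is injective and hence strictly increasing; it sends the minimum $(Q,N)$ to $M$. Applying \Thm{T:inc} and \Thm{T:prod} to $\psi$ now gives $\len Q \ssum \len N \leq \len M$, forcing equality. The only non-formal step in the whole outline is the modular-law argument for strictness of $\phi$ (and its analogue for $\psi$); everything else is an assembly of the tools developed in the preceding section.
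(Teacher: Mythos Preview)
Your proof is correct and follows essentially the same route as the paper: the lower bound via \Lem{L:AB} applied to the initial and terminal intervals at $N$, the upper bound via the strictly increasing map $N'\mapsto\bigl((N'+N)/N,\,N'\cap N\bigr)$ fed into \Thm{T:inc} and \Thm{T:prod}, and the split case via the reverse embedding $\grass RN\times\grass RQ\hookrightarrow\grass RM$. Your strictness argument is exactly the modular-law computation the paper uses (note that $(N'+N)/N=\pi(N')$), and your remark that $\phi$ sends minimum to minimum, making the extra term in \Thm{T:inc} vanish, is a detail the paper leaves implicit.
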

\begin{proof}
The last assertion   follows from the first, \Thm{T:prod}, and    the fact that then 
$$ \grass R N\times\grass R Q\sub \grass R M.$$
 To prove the lower estimate, let $A$ be the initial closed interval $\init {\grass R M}N$ and let $B$ be the terminal closed interval  $\term {\grass R M}N$. By our discussion above, $A=\grass R{M/N}=\gr Q$, since $M/N\iso Q$,  with   \maxel, viewed in $\grass R Q$, equal to   $\zeroid Q$. By the same discussion,  $B=\grass R N$ with \maxel\ $\zeroid N$. Since $A\leq B$, we may apply Lemma~\ref{L:AB}   to get an inequality 
$$
\hdmod {\grass R Q}{\zeroid Q} + \hdmod {\grass R N}{\zeroid N}\leq \hdmod{\grass R M}{\zeroid M},
$$
from which the assertion follows.

To prove the upper bound, let $f\colon \grass RM\to \grass RN\times\grass RQ$ be the map sending a submodule $H\sub M$ to the pair $(H\cap N,\pi(H))$, where $\pi$ denotes the morphism $M\to Q$. It is not hard to see that this is an increasing function. Although it is in general not injective, I claim that $f$ is strictly increasing, so that we can apply  \Thm{T:inc}. Together with \Thm{T:prod}, this gives us the desired inequality. So remains to verify the claim: suppose $H<H'$ but $f(H)=f(H')$. Hence $H'\varsubsetneq H$, but $H\cap N=H'\cap N$ and $\pi(H)=\pi(H')$. Applying the last equality to an element $h\in H\setminus H'$, we get $\pi(h)\in \pi(H')$, whence $h\in H'+N$. Hence, there exists $h'\in H'$ such that $h-h'$ lies in $H\cap N$ whence in $H'\cap N$. This in turn would mean $h\in H'$, contradicting our assumption on $h$.
\end{proof}

Using that $\alpha+n=\alpha\ssum n$, when $n$ is finite, we immediately get:
%Let us call an exact sequence \emph{length-additive} if we have equality at both sides of  \eqref{eq:lensemadd}. The following is an easy example of a length-additive sequence:

\begin{corollary}\label{C:finlenadd}
If $N$ is a submodule of $M$ of finite length, then 
$$\lenmod RM=\lenmod R{M/N}+  \lenmod RN.\qed$$
\end{corollary}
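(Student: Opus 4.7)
The plan is to apply Theorem~\ref{T:semadd} directly to the short exact sequence
\[
\Exactseq N M {M/N}
\]
and then collapse the resulting squeeze by exploiting the finiteness of $\lenmod R N$.

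More concretely, I would first invoke \Thm{T:semadd} to get the two-sided inequality
\[
\lenmod R{M/N}+\lenmod R N\ \leq\ \lenmod R M\ \leq\ \lenmod R{M/N}\ssum \lenmod R N.
\]
Setting $\alpha:=\lenmod R{M/N}$ and $n:=\lenmod R N$, the hypothesis that $N$ has finite length means $n\in\nat$, so the hint $\alpha+n=\alpha\ssum n$ identifies the outer bounds. Consequently both inequalities above are equalities, giving the asserted formula $\lenmod R M=\lenmod R{M/N}+\lenmod R N$.

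The only nontrivial ingredient is the identity $\alpha+n=\alpha\ssum n$ for finite $n$, but this is a standard consequence of the description of $\ssum$ via Cantor normal forms in Appendix~\ref{s:ssum}: shuffling a finite ordinal $n$ into the coefficient $a_0$ of $\omega^0$ produces the same effect as ordinary ordinal addition on the right, since adding finitely many elements at the bottom of the well-order $\alpha$ does not disturb the higher Cantor coefficients. Thus there is no real obstacle; the statement is essentially immediate once \Thm{T:semadd} is in hand, which is precisely why it is stated as a corollary.
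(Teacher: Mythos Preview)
Your proposal is correct and matches the paper's approach exactly: the paper simply states ``Using that $\alpha+n=\alpha\ssum n$, when $n$ is finite, we immediately get'' the corollary, which is precisely the squeeze argument you give. (One minor quibble: in your informal justification, $\alpha+n$ appends $n$ elements at the \emph{top} of $\alpha$, not the bottom---but the Cantor-coefficient reasoning and the identity itself are correct.)
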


%
%
%However,  for order dimension, even in the split case, the inequality in \Thm{T:semadd} can be strict and the upperbound in \Thm{T:semadd} may fail, respectively since $ \grass R N\times\grass R Q\sub \grass R M$ is in general strict and since $f$ is not injective. For instance, if $S=\pol Kx$ is the polynomial ring in one variable, then $\ork S=1$ but $\orkmod S{S\oplus S}\geq\omega$. This can be seen by letting $R$ be the coordinate ring of the cusp $x^3-y^2=0$, since then $R\iso S^2$ as an $S$-module. Moreover, $\gr R\sub \grass SR$ and hence $\omega=\ork R\leq \orkmod SR=\orkmod S {S^2}$.  Put differently, the \sch\ of $S\oplus S$ as a ring is strictly smaller than its \sch\ as an $S$-module. 
%Note, however, that the order dimension of $R/I$ is the same as its order dimension as an $R$-module, a fact to be used in the next section.  

% 
%
%
\begin{corollary}\label{C:regid}
Let $R$ be a Noetherian ring. If $x$ is an $R$-regular element and $I\sub R$ an
arbitrary ideal, then
 $$
\len{R/xR}+\len{R/I}\leq \len{R/xI}.
$$
\end{corollary}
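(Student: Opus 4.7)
The plan is to exhibit a short exact sequence of the form
\[
0 \to R/I \xrightarrow{\,\cdot x\,} R/xI \to R/xR \to 0
\]
and then invoke the lower bound in the semi-additivity theorem (\Thm{T:semadd}).

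First, I would define the map $\mu\colon R/I \to R/xI$ by sending $a+I$ to $xa+xI$. This is well-defined since $xI \subseteq xI$, and it is an $R$-linear homomorphism. The key point to verify is injectivity: if $xa \in xI$, then $xa = xb$ for some $b \in I$, so $x(a-b) = 0$, and since $x$ is $R$-regular, $a - b = 0$, i.e.\ $a \in I$. Thus $\mu$ is injective.

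Next, I would identify the cokernel. The image of $\mu$ is $(xR + xI)/xI = xR/xI$, so the cokernel is $(R/xI)/(xR/xI) \cong R/xR$ by the third isomorphism theorem. This yields the claimed short exact sequence, in which all three modules are finitely generated over the Noetherian ring $R$, so semi-additivity applies. The lower bound of \Thm{T:semadd} then gives exactly
\[
\len{R/xR} + \len{R/I} \;\leq\; \len{R/xI},
\]
as desired.

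There is no real obstacle here beyond the routine injectivity check, which rests solely on $x$ being a non-zero-divisor; the shape of the inequality (with ordinary ordinal sum on the left, and with $\len{R/xR}$ appearing first as the quotient term) is dictated precisely by the direction of the exact sequence, since \Thm{T:semadd} places the quotient ordinal to the left of the submodule ordinal in the lower estimate.
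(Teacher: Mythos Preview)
Your proof is correct and follows exactly the same approach as the paper: apply the lower bound of semi-additivity (\Thm{T:semadd}) to the short exact sequence $\exactseq{R/I}{x}{R/xI}{}{R/xR}$. You have simply spelled out the routine verification (well-definedness, injectivity via regularity of $x$, and identification of the cokernel) that the paper leaves implicit.
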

\begin{proof}
Apply Theorem~\ref{T:semadd} to the exact sequence
$$
\exactseq {R/I} x{R/xI} {} {R/xR}.
$$
\end{proof}
 
Applying Corollary~\ref{C:regid} to the zero ideal and observing that $\alpha+\beta=\beta$ \iff\ $\deg\alpha<\deg\beta$, we get:

\begin{corollary}\label{C:hyp}
If $x$ is an $R$-regular element, then the  degree of $\len{R/xR}$ is
 strictly less than the degree of $\len R$.\qed
\end{corollary}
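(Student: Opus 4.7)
The plan is to specialize Corollary~\ref{C:regid} to the zero ideal $I=0$. Since $x\cdot 0=0$ as well, we have $R/I=R/xI=R$, so the corollary becomes
$$\len{R/xR}+\len R\leq \len R.$$
Combined with the trivial ordinal inequality $\len{R/xR}+\len R\geq\len R$, this forces equality $\len{R/xR}+\len R=\len R$.

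The remaining step is the elementary ordinal fact announced just before the statement: $\alpha+\beta=\beta$ if and only if $\deg\alpha<\deg\beta$, where we use the paper's convention that $\deg 0=-1$. Applied to $\alpha=\len{R/xR}$ and $\beta=\len R$, this immediately delivers the strict inequality $\deg\len{R/xR}<\deg\len R$. The ordinal fact itself is routine from Cantor normal form \eqref{eq:Cantor}: writing $\beta=\sum b_i\omega^i$ with leading exponent $d=\deg\beta$, adding $\alpha$ on the left discards every summand of $\alpha$ of exponent strictly below $d$ and merges the coefficients at exponent $d$, so $\alpha+\beta=\beta$ forces all coefficients of $\alpha$ at exponent $\geq d$ to vanish, i.e.\ $\deg\alpha<d$.

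There is essentially no obstacle here: the statement is a one-line deduction from \Cor{C:regid} together with a standard book\-keeping fact about Cantor normal forms, and no new module-theoretic input is needed.
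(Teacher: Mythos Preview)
Your proof is correct and follows exactly the paper's approach: the paper itself says just before the statement that the corollary follows by ``applying Corollary~\ref{C:regid} to the zero ideal and observing that $\alpha+\beta=\beta$ \iff\ $\deg\alpha<\deg\beta$,'' which is precisely what you do. Your added justification of the ordinal fact via Cantor normal form is fine and not needed for the paper, which treats it as standard.
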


\begin{theorem}\label{T:dim}
Let $M$ be a   finitely generated module over a Noetherian ring $R$. Then the degree of $\lenmod RM$ is equal to the dimension of $M$. In particular,   $R$ is a $d$-dimensional domain \iff\ $\len R=\omega^d$.
\end{theorem}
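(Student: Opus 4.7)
The plan is induction on $d=\dim M$, with the upper and lower bounds on $\deg\lenmod R M$ treated separately; in each the intermediate case is $\len A=\omega^d$ for a Noetherian domain $A$ of dimension $d$. The base case $d=0$ is clear since Artinian modules have finite classical length, so I fix $d\geq 1$ and assume the theorem for all finitely generated modules $N$ with $\dim N<d$.

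For the upper bound $\deg\lenmod R M\leq d$, take a prime filtration $0=M_0\subsetneq M_1\subsetneq\cdots\subsetneq M_n=M$ with $M_i/M_{i-1}\iso R/\pr_i$. Iterating the upper estimate of \Thm{T:semadd} gives $\lenmod R M\leq\lenmod R{R/\pr_1}\ssum\cdots\ssum\lenmod R{R/\pr_n}$, whose degree equals the maximum over $i$ of $\deg\lenmod R{R/\pr_i}$. Each $\dim R/\pr_i\leq d$, and since the length of $R/\pr_i$ is unaffected by changing scalars to $R/\pr_i$, the induction hypothesis disposes of those $\pr_i$ with $\dim R/\pr_i<d$; the remaining case reduces to showing $\len A\leq\omega^d$ for a Noetherian domain $A$ of dimension $d$. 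For such $A$, the identity $\hd I=\lenmod A{A/I}$ presents $\len A=\hd{\zeroid A}$ as the supremum of $\lenmod A{A/I}+1$ over nonzero ideals $I$; every nonzero $I$ satisfies $\dim A/I<d$, so by induction $\lenmod A{A/I}<\omega^d$, and since $\omega^d$ is a limit ordinal the supremum is at most $\omega^d$.

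The lower bound is the main obstacle. By picking an associated prime $\pr$ of $M$ with $\dim R/\pr=d$, the embedding $R/\pr\into M$ combined with the lower estimate of \Thm{T:semadd} yields $\len{R/\pr}\leq\lenmod R M$, so it suffices to show $\len A\geq\omega^d$ for any Noetherian domain $A$ of dimension $d$. Choose a saturated chain of primes $0\subsetneq\pr_1\subsetneq\cdots\subsetneq\pr_d$ in $A$; then $\pr_1$ has height $1$ and $\dim A/\pr_1=d-1$, and for any nonzero $x\in\pr_1$ Krull's Principal Ideal Theorem forces $\pr_1$ to be minimal over $xA$, yielding $\dim A/xA=d-1$ and, by induction, $\lenmod A{A/xA}\geq\omega^{d-1}$. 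Since $x$ is regular, multiplication by $x^{k-1}$ gives $x^{k-1}A/x^kA\iso A/xA$; iterating the lower estimate on $\Exactseq{x^{k-1}A/x^kA}{A/x^kA}{A/x^{k-1}A}$ produces $\lenmod A{A/x^nA}\geq n\lenmod A{A/xA}\geq n\omega^{d-1}$ for every $n\in\nat$. The ordinals $n\omega^{d-1}$ are cofinal in $\omega^d$ and each $x^nA$ is a nonzero ideal of $A$, so the definition of \hdim\ at the limit $\omega^d$ forces $\len A=\hd{\zeroid A}\geq\omega^d$, completing the induction.

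Combining both bounds specializes to $\len A=\omega^d$ for every Noetherian domain $A$ of dimension $d$, which is the forward implication in the ``in particular'' statement. Conversely, assume $\len R=\omega^d$; the first assertion gives $\dim R=d$, and some associated prime $\pr_0$ of $R$ has $\dim R/\pr_0=d$, so $\len{R/\pr_0}=\omega^d$ by the forward direction. Applying \Thm{T:semadd} to $\Exactseq{\pr_0}R{R/\pr_0}$ gives $\omega^d+\lenmod R{\pr_0}\leq\omega^d$, which forces $\lenmod R{\pr_0}=0$; hence $\pr_0=0$ is prime, so $R$ is a Noetherian domain of dimension $d$.
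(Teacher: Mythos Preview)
Your proof is correct, and the overall architecture---induction on $d$, reduction to the domain case for both bounds---matches the paper. The execution differs in two places, and the comparison is worth recording.

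For the upper bound in the general module case, you use a prime filtration and iterate the shuffle-sum estimate, whereas the paper does a second induction on the ordinal $\mu=\lenmod RM$ itself, peeling off one copy of $R/\pr$ via the exact sequence $\Exactseq{R/\pr}M{\bar M}$ and invoking $\mu\leq\bar\mu\ssum\omega^d$. Your filtration argument is cleaner and avoids the auxiliary transfinite induction, at the cost of appealing to the existence of prime filtrations.

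For the lower bound in the domain case, the paper uses \Cor{C:hyp} (if $x$ is regular then $\deg\len{R/xR}<\deg\len R$) together with $\omega^{d-1}\leq\len{R/\pr}\leq\len{R/xR}$ to force $\deg\len R\geq d$. You instead build the bound directly from semi-additivity by iterating on the sequences $\Exactseq{x^{k-1}A/x^kA}{A/x^kA}{A/x^{k-1}A}$ to obtain $\len{A/x^nA}\geq n\omega^{d-1}$, and then pass to the limit. This is more elementary---it uses only \Thm{T:semadd} and bypasses \Cor{C:regid} and \Cor{C:hyp} entirely---though the paper's route has the advantage of isolating the useful intermediate fact \Cor{C:hyp}.

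Your converse (choosing a $d$-dimensional associated prime $\pr_0$ and using $\omega^d+\len{\pr_0}\leq\omega^d$ to kill $\pr_0$) is a minor rephrasing of the paper's argument that every nonzero ideal drops dimension.
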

\begin{proof}
Let $\mu$   be the   length of $M$ and $d$ its dimension. 
 We start with proving the inequality  
\begin{equation}\label{eq:dimlow}
 \omega^d\leq \mu. 
\end{equation}
 We will do this first for $M=R$, by induction on  $d$, where the case $d=1$ is clear, since $R$ does not have finite length. Hence we may assume $d>1$.
Taking the residue modulo a $d$-dimensional prime ideal (which only can
lower   length), we may assume that $R$ is a domain. Let $\pr$ be a $(d-1)$-dimensional prime ideal  and let $x$ be a non-zero element in $\pr$. 
By Corollary~\ref{C:hyp},
 the degree of   $\len{R/xR}$ is at most $\deg\mu-1$.  
By induction,
 $\omega^{d-1}\leq\len{R/\pr} \leq \len{R/xR}$, whence $d-1\leq \deg\mu-1$, proving \eqref{eq:dimlow}.

For $M$ an arbitrary $R$-module, let $\pr$ be a $d$-dimensional associated prime of $M$, so that we can find an exact sequence
\begin{equation}\label{eq:asspr}
\Exactseq {R/\pr}M{\bar M}.
\end{equation}
With $\bar\mu:=\len{\bar M}$, \Thm{T:semadd} yields
\begin{equation}\label{eq:assprlen}
\bar \mu +\len{R/\pr}\leq \mu\leq \bar \mu\ssum\len{R/\pr}.
\end{equation}
In particular, $\omega^d\leq\len{R/\pr}\leq \mu$, proving \eqref{eq:dimlow}. 

Next we show that 
\begin{equation}\label{eq:dimhi}
\mu<\omega^{d+1},
\end{equation}
 again by induction on $d$. 
Assume   first that $M=R$ is a domain. Since   $R/I$ has then dimension at most $d-1$ for any non-zero ideal $I$, we get $\len{R/I}<\omega^d$ by our induction hypothesis. By \eqref{eq:quot}, this means that any non-zero ideal has \hdim\ less than $\omega^d$, and hence $R$ itself has length at most $\omega^d$. Together with \eqref{eq:dimlow}, this already proves one direction in the second assertion.
For the general case, we do a second induction, this time on $\mu$. With $\pr$ as above, a $d$-dimensional associated prime of $M$, we get $\mu\leq \bar \mu\ssum\len{R/\pr}$ by \eqref{eq:assprlen}.  By what we just proved, $\len{R/\pr}=\omega^d$, and hence by induction $\mu\leq\bar \mu\ssum\omega^d<\omega^{d+1}$. The first assertion is now immediate from \eqref{eq:dimlow} and \eqref{eq:dimhi}.

Conversely, if $R$ has length $\omega^d$, then for any non-zero ideal $I$, the length of $R/I$ is strictly less than $\omega^d$, whence its dimension is strictly less than $d$ by what we just proved. This shows that $R$ must be a domain.
\end{proof}

%Let us call a module \emph{univalent} if its valence is one. Recall that a module is called \emph{uniform}, if the intersection of two non-zero submodules is again non-zero. 
%
%\begin{corollary}\label{C:unival}
%A univalent module is uniform and its annihilator is a prime ideal.
%\end{corollary} 
%\begin{proof}
%Suppose $M$ is univalent, so that $\len M=\omega^d$, where $d=\op{dim}(M)$. Let $N\sub M$ be any non-zero submodule. It follows that $N$ has dimension $d$ too, and therefore, its length must be at least $\omega^d$ by \Thm{T:dim}, and therefore equal to it. If $N'$ is a second non-zero submodule and $N\cap N'=0$, then $N\oplus N'\iso N+N'$ would be a submodule of $M$ of length $2\omega^d$ by \Thm{T:semadd}, contradiction. This shows that $M$ is uniform. If $\ann RM$ is not prime, we can find a $d$-dimensional prime ideal $\pr$ strictly containing $\ann RM$. Hence $M/\pr M$ has still dimension $d$ and therefore has length at least $\omega^d$ by \Thm{T:dim}. However, since $\pr M$ is not zero, $\len {M/\pr M}=\hd{\pr M}<\hd{\zeroid{M}}=\len M=\omega^d$,  by \eqref{eq:quot}, contradiction.
%%
%%By semi-additivity applied to $\Exactseq {\pr M}M{M/\pr M}$ would then yield $2\omega^d\leq \len M$, contradiction.
%\end{proof} 

Let $\genlen RM$ be the \emph{generic length} of a Noetherian module $M$, defined as the sum
$$
\genlen RM=\sum_{\op{dim}\pr=\op{dim}(M)}\len{M_\pr}.  
$$
Note that we only have a non-zero contribution in this sum if $\pr$ is a minimal prime of $M$, and the corresponding localization $M_\pr$ then has finite length, so that $\genlen{}M$ is well-defined. If $M$ has finite length, $\genlen{}M=\len M$. With the notation from \eqref{eq:splitord}, we get

\begin{proposition}\label{P:genlen}
For $d=\op{dim}(M)$, we have $\lenmod RM^+_d=\genlen RM\omega^d$.
\end{proposition}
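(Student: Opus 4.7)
My plan is to compute $\lenmod R M^+_d$ by working along a prime filtration of $M$, applying semi-additivity at each step, and identifying the result with the generic length via localization.

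First, I would pick a prime filtration $\zeroid M = M_0 \subset M_1 \subset \cdots \subset M_n = M$ with $M_i/M_{i-1} \iso R/\pr_i$ for primes $\pr_i$. Because $\pr_i \in \op{Supp}(M)$, the dimension $d_i := \op{dim}(R/\pr_i)$ is at most $d$, and \Thm{T:dim} gives $\lenmod R{R/\pr_i} = \omega^{d_i}$. Applying \Thm{T:semadd} to $\Exactseq{M_{i-1}}{M_i}{R/\pr_i}$ yields the two-sided estimate
\begin{equation*}
\omega^{d_i} + \lenmod R{M_{i-1}} \leq \lenmod R{M_i} \leq \omega^{d_i} \ssum \lenmod R{M_{i-1}}.
\end{equation*}

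Second, I would prove by induction on $i$ that $(\lenmod R{M_i})^+_d = N_i\,\omega^d$, where $N_i := \#\{\,j \leq i : d_j = d\,\}$. The inductive step rests on the observation that both bounds above have the same $+_d$-part: if $d_i < d$, the ordinary sum absorbs the subdominant terms of $\lenmod R{M_{i-1}}$ and leaves its $\omega^d$-coefficient unchanged, while the shuffle sum only bumps up a coefficient below degree $d$; if $d_i = d$, both bounds raise the $\omega^d$-coefficient by exactly one. Sandwiching $\lenmod R{M_i}$ between two ordinals with equal $+_d$-parts then forces $\lenmod R{M_i}^+_d$ to coincide with them, a short lexicographic comparison of Cantor normal forms (or a direct application of \Lem{L:eineq}). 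Specializing to $i = n$ gives $\lenmod R M^+_d = N_n\,\omega^d$.

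Finally, I would identify $N_n$ with $\genlen R M$ by localizing. For any $d$-dimensional prime $\pr$ of $\op{Supp}(M)$, the minimality of $\pr$ forces any filtration prime $\pr_j \subseteq \pr$ to satisfy $\pr_j = \pr$, so $(M_i/M_{i-1})_\pr$ is the residue field of $R_\pr$ when $\pr_j = \pr$ and zero otherwise. Hence $\lenmod{R_\pr}{M_\pr} = \#\{\,j : \pr_j = \pr\,\}$, and summing over the finitely many $d$-dimensional primes in $\op{Supp}(M)$ yields $\genlen R M = N_n$. The main obstacle is the middle step, namely the ordinal bookkeeping showing that the lower and upper semi-additive bounds truly agree on their top-degree parts; once that is pinned down, the filtration induction and the localization identification are routine.
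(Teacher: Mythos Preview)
Your argument is correct and runs parallel to the paper's, but with a different inductive scheme. The paper inducts on the ordinal $\lenmod RM$ itself: it picks a single $d$-dimensional associated prime $\pr$, embeds $R/\pr$ into $M$ via \eqref{eq:asspr}, applies semi-additivity to get $\bar\mu+\omega^d\leq\mu\leq\bar\mu\ssum\omega^d$, and invokes the induction hypothesis on the quotient $\bar M$. You instead fix a full prime filtration up front and induct on the filtration index, handling primes of all dimensions uniformly. Your route replaces transfinite induction by finite induction and makes the identification with generic length via localization completely explicit, at the price of a little extra bookkeeping in the case $d_i<d$. Both arguments rest on the same two ingredients---semi-additivity to pin down the top Cantor coefficient, and localization at $d$-dimensional primes to count---so the difference is one of packaging rather than substance.

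One small imprecision: the parenthetical appeal to \Lem{L:eineq} in your middle step is not quite apt, since that lemma bounds the degree of a summand $\lambda$ in an inequality $\alpha+\lambda\leq\beta$, whereas what you actually need is the monotonicity of the truncation $\alpha\mapsto\alpha^+_d$ (so that sandwiching $\lenmod R{M_i}$ between two ordinals with equal $(\cdot)^+_d$-parts forces equality). Your primary justification---a direct lexicographic comparison of Cantor normal forms---is the correct one and already suffices.
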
 
\begin{proof}
We induct on $\mu:=\len M$, where the case for finite $\mu$ is clear. By \Thm{T:dim} we can write $\mu=a\omega^d+\mu^-$. Let $\pr$ be a $d$-dimensional associated prime of $M$, so that we have an exact sequence~\eqref {eq:asspr} 
and let $\bar \mu:=\len{\bar M}$. By induction, we have $\bar\mu=\genlen{}{\bar M}\omega^d+\bar\mu^-$. Applying \Thm{T:semadd} to \eqref{eq:asspr} yields inequalities $\bar\mu+\omega^d\leq\mu\leq \bar\mu\ssum\omega^d$. Looking at the coefficients of the degree $d$ terms,  this implies $a=\genlen{}{\bar M}+1$. On the other hand, localizing \eqref{eq:asspr} at $\pr$ and taking lengths gives $\len{M_\pr}=\len{\bar M_\pr}+1$, whereas localizing at any other $d$-dimensional prime ideal $\mathfrak q$   gives $M_{\mathfrak q}=\bar M_{\mathfrak q}$, showing that $\genlen{}M=\genlen {}{\bar M}+1=a$.
\end{proof}

\begin{remark}\label{R:dim}
In \cite{SchOrdLen}, we will extend this formula by calculating all coefficients in $\lenmod RM$. As a corollary, we will obtain that the order of $M$ is the minimal dimension of an associated prime of $M$. In this paper, we only prove the following  consequence of this characterization, where we call a module $M$ \emph{unmixed}, if all its associated primes have the same dimension (equivalently, if  all non-zero modules have the same dimension).
\end{remark}

\begin{theorem}\label{T:unm}
A module is unmixed \iff\ its length is a monomial, that is to say, of the form $a\omega^d$. Moreover, $a$ is then the generic length of the module and $d$ its dimension.
\end{theorem}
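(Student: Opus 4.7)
\textbf{Forward direction.} Suppose $M$ is unmixed with $\op{Ass}(M) = \{\pr_1,\dots,\pr_n\}$, all of dimension $d$. Since the annihilator of any nonzero element of $M$ sits inside some associated prime, the canonical map $M \hookrightarrow \bigoplus_i M_{\pr_i}$ is injective; and unmixedness forces each $M_{\pr_i}$ to be Artinian of length $\ell_i$ over $R_{\pr_i}$, with a composition series whose factors are copies of $\kappa(\pr_i) := R_{\pr_i}/\pr_i R_{\pr_i}$. Concatenating these composition series produces a filtration $0 = N_0 \subsetneq \cdots \subsetneq N_\ell = \bigoplus_i M_{\pr_i}$ of length $\ell = \sum \ell_i = \genlen R M$, each step a residue field $\kappa(\pr_{i(j)})$. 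Setting $M_j := M \cap N_j$, flatness of localization together with the incomparability of distinct $\pr_i$'s (so $(M_{\pr_i})_{\pr_{i'}} = 0$ for $i \neq i'$) shows that each subquotient $M_j/M_{j-1}$ embeds into $\kappa(\pr_{i(j)})$ and remains nonzero after localizing at $\pr_{i(j)}$; hence it is isomorphic (after clearing denominators) to a nonzero ideal of the $d$-dimensional domain $R/\pr_{i(j)}$.

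The heart of the argument is a sub-lemma: any nonzero ideal $I$ of a $d$-dimensional Noetherian domain $D$ satisfies $\len I = \omega^d$. Since $\len D = \omega^d$ and $\dim D/I < d$ (so $\deg\len(D/I) < d$) by \Thm{T:dim}, \Thm{T:semadd} applied to $\Exactseq I D {D/I}$ yields $\len(D/I) + \len I \leq \omega^d \leq \len(D/I) \ssum \len I$. Reading off the $\omega^d$-coefficient on the right shows $\len I \geq \omega^d$; but then $\deg\len I \geq d > \deg\len(D/I)$, so $\len(D/I) + \len I = \len I$, and the left inequality gives $\len I \leq \omega^d$. Granted the sub-lemma, iterated semi-additivity on $(M_j)$ proves $\len M_j = j\omega^d$ by induction on $j$: both $\omega^d + (j-1)\omega^d$ and $\omega^d \ssum (j-1)\omega^d$ equal $j\omega^d$, squeezing $\len M_j$. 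Taking $j = \ell$ delivers $\len M = \genlen R M \cdot \omega^d$.

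\textbf{Backward direction and identifications.} Conversely, suppose $\len M = a\omega^d$. For any $\mathfrak q \in \op{Ass}(M)$, put $e := \dim R/\mathfrak q$; then $R/\mathfrak q \hookrightarrow M$ with $\len(R/\mathfrak q) = \omega^e$ by \Thm{T:dim}, so \Thm{T:semadd} gives $\len M' + \omega^e \leq a\omega^d \leq \len M' \ssum \omega^e$, where $M' := M/(R/\mathfrak q)$. Writing $\len M' = c_d\omega^d + \tau$ with $\deg\tau < d$, the upper bound preserves the $\omega^d$-coefficient under the shuffle and so forces $a \leq c_d$; if additionally $e < d$, then $\len M' + \omega^e$ carries a nonzero term of degree $e$ below $\omega^d$, so $c_d = a$ would push it beyond $a\omega^d$, forcing $c_d < a$---contradicting $a \leq c_d$. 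Hence $\dim R/\mathfrak q = d$ for every $\mathfrak q \in \op{Ass}(M)$, i.e.\ $M$ is unmixed. Finally, \Thm{T:dim} gives $d = \dim M$, and \Prop{P:genlen} gives $a\omega^d = \lenmod R M^+_d = \genlen R M \cdot \omega^d$, hence $a = \genlen R M$. The main obstacle lies in the forward direction's combinatorial bookkeeping---that the pulled-back filtration is strict and has subquotients of the stated form---which rests on flat base change and the incomparability of distinct $d$-dimensional associated primes.
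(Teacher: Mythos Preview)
Your argument is correct in both directions, but the forward direction takes a genuinely different route from the paper. The paper argues by contradiction: writing $\mu=a\omega^d+\mu^-$ and supposing $\mu^-\neq0$, one can find a nonzero submodule $N$ with height rank $a\omega^d$ in $\grass RM$, so that $\len(M/N)=a\omega^d$ by \eqref{eq:quot}; semi-additivity then forces $\len N\leq\mu^-$, whence $\dim N<d$, contradicting unmixedness. This is a three-line argument using only the height-rank interpretation of length and semi-additivity. Your approach instead constructs an explicit filtration of $M$ by pulling back a composition series from $\bigoplus_i M_{\pr_i}$, proving the sub-lemma that nonzero ideals in a $d$-dimensional domain have length $\omega^d$, and then squeezing each step via semi-additivity. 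This is considerably longer and requires care with flat base change and incomparability of the associated primes, but it has the virtue of exhibiting a concrete chain of length $\genlen{}M\cdot\omega^d$ inside $\grass RM$---something the paper only obtains later in \Thm{T:chain}. Your backward direction is essentially the same idea as the paper's (embed a low-dimensional submodule and squeeze with semi-additivity), though you use $R/\mathfrak q$ for an associated prime while the paper uses an arbitrary nonzero submodule of dimension $<d$.

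One phrasing issue: in your backward direction you assert ``the upper bound preserves the $\omega^d$-coefficient under the shuffle and so forces $a\leq c_d$'' \emph{before} writing ``if additionally $e<d$''. The inequality $a\leq c_d$ already requires $e<d$ (when $e=d$ the shuffle increments the $\omega^d$-coefficient and you only get $a\leq c_d+1$). Your intended logic---assume $e<d$, derive $a\leq c_d$ from the upper bound and $c_d<a$ from the lower bound, contradiction---is correct, but you should fold the hypothesis $e<d$ in at the outset rather than midway.
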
 
\begin{proof}
The second assertion is just \Prop{P:genlen}. 
Let $d$ be the dimension of $M$. Suppose $M$ is unmixed and let $\mu$ be its length. By \Thm{T:dim}, we can write $\mu=a\omega^d+\mu^-$, with $a$ a positive integer and  $\mu^-:=\mu^-_{d-1}$ as in \eqref{eq:splitord}. We need to show that $\mu^-=0$. Suppose it is not, so that there must exist a non-zero submodule $N$ with $\hd N=a\omega^d$. By \eqref{eq:quot}, we have $\len{M/N}=a\omega^d$. By \Thm{T:semadd}, with $\nu:=\len N$, we get $a\omega^d+\nu\leq\mu=a\omega^d+\mu^-$. Hence $\nu\leq\mu^-$, and so $\nu$   has degree at most $d-1$, which means that $\op{dim}(N)<d$ by \Thm{T:dim}. Since $M$ is unmixed, we must have $N=0$, contradiction.

Conversely, suppose $\mu=a\omega^d$. Suppose there exists a non-zero sub-module  $N\sub M$  of dimension at most $d-1$. Let $\nu$ and $\gamma$ be the respective lengths of $N$ and $M/N$. By semi-additivity, we have 
$$\gamma+\nu\leq a\omega^d\leq \gamma\ssum\nu.$$
By \Thm{T:dim}, the degree of $\nu$ is at most $d-1$. Hence for the first equality to hold, we must have $\gamma<a\omega^d$. However,   $\gamma\ssum\nu$ is then also strictly less than $a\omega^d$, contradicting the second inequality.
\end{proof} 

Immediately from \Thm{T:dim} and \Thm{T:unm}, since $\omega^d+\nu=\omega^d\ssum \nu$, for any ordinal $\nu$ of degree at most $d$, we get

 \begin{corollary}\label{C:unm}
If $\Exactseq NMQ$ is exact and $Q$ is unmixed with $\op{dim}Q=\op{dim}M$, then $\len M=\len Q\ssum \len N$.\qed
\end{corollary}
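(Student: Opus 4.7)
The plan is to sandwich $\len M$ between two ordinals that will turn out to coincide, using the semi-additivity bounds of \Thm{T:semadd} together with the monomial description of $\len Q$ provided by \Thm{T:unm}. Once these two theorems are invoked, the argument is purely about ordinals.

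First I would set $d:=\op{dim}M=\op{dim}Q$ and apply \Thm{T:unm} to $Q$ to obtain $\len Q=a\omega^d$ for some positive integer $a$ (namely the generic length of $Q$). Next, since $N$ embeds into $M$ we have $\op{dim}N\leq d$, so \Thm{T:dim} yields $\deg\len N\leq d$; write $\len N=\sum_{i=0}^d n_i\omega^i$ in Cantor normal form.

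The key algebraic observation is that whenever $\nu$ is an ordinal with $\deg\nu\leq d$, one has $a\omega^d+\nu=a\omega^d\ssum\nu$: both operations produce the ordinal $(a+n_d)\omega^d+\sum_{i<d}n_i\omega^i$, because the $\omega^d$-coefficients combine and the strictly lower-degree tail of $\nu$ is preserved in either sum (no tail gets annihilated, since the shuffle sum is defined to add coefficients power-by-power). Applying this with $\nu=\len N$ collapses the bounds $\len Q+\len N\leq\len M\leq\len Q\ssum\len N$ from \Thm{T:semadd} into an equality, which is exactly $\len M=\len Q\ssum\len N$.

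I anticipate no genuine obstacle; the entire content is the Cantor-normal-form comparison of $+$ and $\ssum$ in the relevant degree range, which is the parenthetical remark the author inserts immediately before the corollary. Everything else is just invoking \Thm{T:unm} to pin down the shape of $\len Q$, invoking \Thm{T:dim} to bound the degree of $\len N$, and invoking \Thm{T:semadd} to get both bounds on $\len M$.
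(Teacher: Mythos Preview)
Your proposal is correct and is exactly the argument the paper intends: the sentence immediately preceding the corollary (``since $\omega^d+\nu=\omega^d\ssum\nu$, for any ordinal $\nu$ of degree at most $d$'') records precisely the ordinal identity you use, and the invocation of \Thm{T:dim}, \Thm{T:unm}, and (implicitly) \Thm{T:semadd} matches the paper's justification. Your write-up simply makes explicit what the paper compresses into that one parenthetical remark and the \textsf{qed}.
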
 

There is another measure for how long   a partial well-order $P$ is, namely the maximal length of a chain in $P$. More precisely, given a chain  $\mathcal C$ in $P$,  let $\binord {\mathcal C}$ be the ordinal giving the order type of ${\mathcal C}$. Assume $P$ has a \maxel\ $\top$. Define the \emph{chain length} of $P$ as the supremum of all $\binord {\mathcal C}$, where ${\mathcal C}$ runs over all chains in $P$ not containing $\top$. We have to omit  $\top$  here to conform with the notion of   length for   finite chains as one less than their cardinality. The chain length can be smaller than the length: for each $n$, let $C_n$ be a chain of length $n$,   let $P$ be obtained from the disjoint union of all $C_n$ by adding two more elements $a<b$ above all these. In particular, as each chain in $P$ is finite, but of arbitrarily large size, its chain length is $\omega$. However, the \hdim\ of $a$ is $\omega$, and hence $\len P=\hdmod Pb=\hdmod Pa+1 =\omega+1$. This phenomenon cannot occur in \sch{s}, and, in fact,   the chain length is even a maximum.

\begin{theorem}\label{T:chain}
The length of $M$ is equal to the chain length of $\grass RM$. In fact, $\lenmod RM$ is the maximum of all $\binord {\mathcal C}$, where ${\mathcal C}$ runs over all chains of non-zero submodules in $\grass RM$. 
\end{theorem}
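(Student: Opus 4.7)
I would prove the two inequalities separately.

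\textbf{Upper bound.} Enumerate a chain $\mathcal C$ of nonzero submodules in increasing $\prec$-order as $(C_i)_{i<\alpha}$, so that $C_i\supsetneq C_j$ for $i<j$ and $\binord{\mathcal C}=\alpha$. A transfinite induction gives $\hd{C_i}\geq i$ in $\grass RM$: at successor $i=j+1$, the relation $C_j\prec C_i$ together with $\hd{C_j}\geq j$ yields $\hd{C_i}\geq j+1$; at limit $i$, each $C_j$ with $j<i$ witnesses $\hd{C_i}\geq j+1$ via the supremum clause of the height-rank definition. Since each $C_i$ is nonzero, $C_i\prec\zeroid M$, so $\hd{C_i}<\hd{\zeroid M}=\len M$, giving $i<\len M$ for every $i<\alpha$ and hence $\alpha\leq\len M$.

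\textbf{Attainment.} For a chain of order type $\mu:=\len M$ I would argue by transfinite induction on $\mu$, the case $\mu=0$ being vacuous. In the successor case $\mu=\beta+1$ one picks a nonzero $N$ with $\hd N=\beta$ (necessarily simple, as a proper nonzero submodule of $N$ would have height $\geq\mu$ and so be forced to equal $\zeroid M$, a contradiction). The inductive hypothesis applied to $M/N$ (of length $\beta<\mu$) gives a chain of nonzero submodules of $M/N$ of order type $\beta$; pulling it back along $M\onto M/N$ yields a chain of submodules $\supsetneq N$ in $M$, and appending $N$ itself at the $\prec$-top produces a chain of order type $\mu$ in $\grass RM$.

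For $\mu$ a limit I would apply the same ``pull back and append'' template to a nonzero $N\subsetneq M$ realizing equality in the lower semi-additivity estimate $\len{M/N}+\len N=\mu$: by \Prop{P:sum}, concatenating an inductive chain of nonzero submodules of $M/N$ (pulled back as submodules $\supsetneq N$ in $M$) with an inductive chain of nonzero submodules of $N$ produces a chain in $\grass RM$ of order type $\mu$. If $M$ is not unmixed, take $N$ a copy of $R/\pr$ for an associated prime $\pr$ of dimension $e<d:=\dim M$, so that $\len N=\omega^e<\mu$ by \Thm{T:dim}, and a Cantor-normal-form comparison using \Prop{P:genlen} verifies the equality; both $\len{M/N}<\mu$ and $\len N<\mu$, so induction applies on each side. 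In the remaining unmixed case $\mu=a\omega^d$ (by \Thm{T:unm}) one uses a prime filtration $0=M_0\subsetneq\cdots\subsetneq M_a=M$ with each $M_i/M_{i-1}\iso R/\pr_i$ for a $d$-dimensional minimal prime; in each block $S:=R/\pr_i$ (of length $\omega^d$) one picks an $S$-regular element $x$ and, via the inductive hypothesis applied to $S/xS$ (whose length has strictly smaller degree by \Cor{C:hyp}, hence is strictly less than $\omega^d\leq\mu$), obtains a chain of order $\len{S/xS}$ in $S/xS$; pulling back and interleaving between consecutive $x^jS$ produces a chain of order type $\omega^d$ in $S$, and concatenating the $a$ blocks gives a chain of order type $a\omega^d=\mu$ in $\grass RM$. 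The principal obstacle is this unmixed limit case, where no proper nonzero submodule of strictly smaller length is available to drive a single inductive step.
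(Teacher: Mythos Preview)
Your upper bound and successor case are correct and match the paper. The two limit subcases, however, each have a gap.

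In the non-unmixed subcase you choose $N\cong R/\pr$ for an associated prime of dimension $e<d$ and assert that $\len{M/N}+\omega^e=\mu$ via \Prop{P:genlen}. But \Prop{P:genlen} only pins down the coefficient of $\omega^d$; semi-additivity gives $\len{M/N}+\omega^e\le\mu\le\len{M/N}\ssum\omega^e$, and these bounds differ exactly by the degree-$<e$ part of $\len{M/N}$, so equality on the left amounts to $\order{}\mu\ge e$. For that you would need an associated prime of dimension exactly $\order{}\mu$, which is the (here unproven) content of \Rem{R:dim}. The paper avoids this by choosing $N$ through its height rank rather than module-theoretically: writing $\mu=\bar\mu\ssum\omega^e$ with $e=\order{}\mu$, it picks any $N$ with $\hd N=\bar\mu$, and then semi-additivity alone forces $\len N=\omega^e$.

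In the unmixed subcase the filtration $0=M_0\subsetneq\cdots\subsetneq M_a=M$ with each $M_i/M_{i-1}\cong R/\pr_i$ for a $d$-dimensional prime need not exist. Take $M=(x,y)\subset R=k[x,y]$: this is torsion-free, hence unmixed of dimension $2$, with $\genlen{}M=1$, so $\len M=\omega^2$ and $a=1$; your filtration would force $M\cong R$, but $(x,y)$ is not principal. The paper handles $\mu=\omega^d$ by a separate induction on $d$: pick $M_1$ with $\hd{M_1}=\omega^{d-1}$, use unmixedness plus semi-additivity to see $\len{M_1}=\omega^d$, and iterate to produce $M\supsetneq M_1\supsetneq M_2\supsetneq\cdots$ with $\omega^{d-1}$-chains between consecutive terms. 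Your regular-element interleaving is close in spirit---indeed $M_j:=x^jM$ for an $M$-regular $x$ would work when $\mu=\omega^d$---but you phrased it inside a block $S=R/\pr_i$ coming from a filtration that is not in general available.
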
 
\begin{proof}
Let ${\mathcal C}$ be a chain of non-zero submodules and let $\rho:=\binord {\mathcal C}$. Let $M_\alpha$ for $\alpha<\rho$ be    the $\alpha$-th element in this chain. In particular, $\zeroid M$ is the $\rho$-th element in the chain ${\mathcal C}\cup\{\zeroid M\}$.  By definition, $\hd {M_\alpha}\geq \alpha$. Hence $\len M=\hd{\zeroid M}\geq \binord M$.
So the result will follows once we prove the existence of a chain of non-zero modules of length $\mu:=\len M$, and we do this  by induction on $\mu$. For $\mu<\omega$, this is just the classical Jordan-Holder theorem for modules of finite length. If $\mu$ has valence at least two, we can write it as $\bar\mu\ssum\omega^e$, where $e$ is the order of $\mu$. Let $N$ be a submodule of \hdim\ $\bar\mu$, so that $M/N$ has length $\bar\mu$ by \eqref{eq:quot}. If $\nu:=\len N$, then by semi-additivity, we get $\bar\mu+\nu\leq \bar\mu\ssum\omega^e\leq\bar\mu\ssum\nu$. The latter implies that $\omega^e\leq\nu$, and by the former inequality, it cannot be bigger either. By induction we can find a chain of non-zero submodules in $\grass{}{M/N}$ of length $\bar\mu$, and hence, upon lifting these to $M$, we get a chain in $\grass{}M$ of submodules strictly containing $N$. Induction also gives a chain of non-zero submodules of $N$ of length $\omega^e$, and putting these two chains together, we get a chain of length $\mu$.  

So remains the case that $\mu=\omega^d$.  We prove this case independently by induction on $d$, where the case $d=1$ is classical: if there were no infinite chains,  then $M$ is both Artinian and Noetherian, whence of finite length (\cite[Proposition 6.8]{AtiMac}). Put $M_0:=M$ and choose a submodule $M_1$ of \hdim\ $\omega^{d-1}$. Applying the induction hypothesis to  $M/M_1$, we can find, as above, a chain ${\mathcal C}_1$ of submodules strictly containing $M_1$, of length $\omega^{d-1}$. Since $M_1$ is non-zero, it has dimension $d$, whence its length must be at least $\omega^d$ by \Thm{T:dim}, and therefore, by semi-additivity, equal to it.   Choose a submodule $M_2$ of $M_1$ of \hdim\ $\omega^{d-1}$ in $\grass{}{M_1}$, and as before, find a $\omega^{d-1}$-chain ${\mathcal C}_2$ in this \sch\    of submodules strictly containing $M_2$. Continuing in this manner, we get a descending chain $M_0\varsupsetneq M_1\varsupsetneq M_2\varsupsetneq\dots$ and $\omega^{d-1}$-chains ${\mathcal C}_n$ from $M_n$ down to $M_{n+1}$. The union of all these chains is therefore a chain of length $\omega^d$, as we needed to construct.
\end{proof}

\section{Acyclicity}
For the remainder of this paper $R$ is a Noetherian ring, $d$   its dimension, and $\rho$ its length. Furthermore,   $M$, $N$, \dots are finitely generated modules over $R$, of length $\mu$, $\nu$, etc.
We start with reproving the observation of Vasconcelos \cite{VasFlat} that a surjective endomorphism on a Noetherian module must be an isomorphism (the usual proof uses the determinant trick; see for instance \cite[Theorem 2.4]{Mats}).

\begin{corollary}\label{C:Vasc}
Any surjective endomorphism  is an isomorphism.
\end{corollary}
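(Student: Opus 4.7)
The plan is to apply the lower half of semi-additivity (\Thm{T:semadd}) to the short exact sequence arising from $f$ itself. Since $f\colon M\onto M$ is surjective and $M$ is Noetherian, we have the exact sequence
\[
\Exactseq{\ker f}{M}{M},
\]
where the last arrow is $f$. Writing $\mu:=\len M$, semi-additivity gives
\[
\mu+\len{\ker f}\leq \mu.
\]

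The next step is to extract $\ker f=0$ from this ordinal inequality. Ordinal addition is strictly increasing in its right argument: if $\nu>0$, then $\mu+\nu>\mu$ (immediate by transfinite induction on $\nu$, using that $\mu+(\nu+1)=(\mu+\nu)+1$ and that $\mu+\lambda$ is the supremum of the $\mu+\nu$ for $\nu<\lambda$ at limit stages). Consequently the inequality above forces $\len{\ker f}=0$. Finally, by the definition of length via $\hd{\zeroid{\ker f}}$ in $\grass R{\ker f}$, a module of length zero has $\zeroid{}$ as a minimal element of its Grassmannian, which means the ambient module is itself zero. Hence $\ker f=0$, so $f$ is injective, and combined with surjectivity, it is an isomorphism.

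There is essentially no obstacle here: the entire module-theoretic content has been packaged into \Thm{T:semadd}, so the corollary is a one-line consequence of semi-additivity together with the elementary ordinal fact that $\mu+\nu\leq\mu$ forces $\nu=0$. This contrasts nicely with the classical proofs via the determinant trick (as in \cite[Theorem~2.4]{Mats}), highlighting the strength of the ordinal-valued length as an invariant.
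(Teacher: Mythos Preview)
Your proof is correct and follows essentially the same approach as the paper: apply the lower bound of semi-additivity to the short exact sequence $\Exactseq{\ker f}{M}{M}$ to get $\len M+\len{\ker f}\leq\len M$, then conclude $\len{\ker f}=0$ and hence $\ker f=0$. The paper condenses the last two steps into ``by simple ordinal arithmetic, this implies $\len N=0$, whence $N=0$,'' whereas you spell out why $\mu+\nu\leq\mu$ forces $\nu=0$ and why zero length forces the zero module; this extra detail is fine but not a different argument.
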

\begin{proof}
Let $M\to M$ be a surjective endomorphism with kernel  $N$, so that we have an exact sequence $\Exactseq NMM$, and therefore, by \Thm{T:semadd}, an inequality $\len M+\len N\leq \len M$. By simple ordinal arithmetic, this implies $\len N=0$, whence $N=0$.
\end{proof}

\begin{remark}\label{R:Vasc}
Our argument in fact proves that any surjection between modules of the same length must be an isomorphism, or more generally, if $f\colon M\to N$ is an epimorphism and $\len  M\leq \len N$, then $f$ is an isomorphism and $\len M=\len N$.
\end{remark}

\begin{corollary}\label{C:subim}
If $N$ is a homomorphic image of $M$ which contains a submodule isomorphic to $M$, then $M\iso N$.
\end{corollary}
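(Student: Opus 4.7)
The plan is to deduce an inequality of lengths from the injection, and then apply Remark \ref{R:Vasc} to the given surjection.

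\textbf{Step 1: Compare lengths.} Let $f\colon M\onto N$ be the given surjection, and let $g\colon M\into N$ realize $M$ as a submodule of $N$. From $g$ we obtain the exact sequence $\Exactseq M N {N/g(M)}$. By the lower bound in semi-additivity (\Thm{T:semadd}),
\begin{equation*}
\lenmod R{N/g(M)}+\lenmod R M\leq \lenmod R N.
\end{equation*}
Since ordinal sum satisfies $\alpha+\beta\geq\beta$ for all ordinals $\alpha,\beta$, we conclude $\lenmod R M\leq \lenmod R N$.

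\textbf{Step 2: Invoke the strengthened Vasconcelos principle.} The surjection $f\colon M\onto N$ together with $\lenmod R M\leq \lenmod R N$ forces $f$ to be an isomorphism by \Rem{R:Vasc}. (Spelled out: if $K=\ker f$, then semi-additivity applied to $\Exactseq K M N$ gives $\lenmod R N+\lenmod R K\leq \lenmod R M\leq \lenmod R N$, and elementary ordinal arithmetic then yields $\lenmod R K=0$, so $K=0$.)

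There is no real obstacle here; the only thing to be careful about is not to confuse $\ssum$ with $+$, since it is the non-commutative ordinary ordinal sum $+$ that guarantees $\alpha+\beta\geq\beta$ in Step 1 and the cancellation $\alpha+\kappa=\alpha\Rightarrow\kappa=0$ in Step 2.
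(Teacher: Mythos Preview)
Your proof is correct and follows the same approach as the paper: from the injection $M\into N$ deduce $\lenmod RM\leq\lenmod RN$ via semi-additivity, then apply \Rem{R:Vasc} to the surjection $M\onto N$. The paper's proof is simply a terser version of yours, stating these two steps in one sentence each without unpacking the exact sequences or the ordinal arithmetic.
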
 
\begin{proof}
  Since $M\into N$, semi-additivity yields $\len M\leq\len N$. By \Rem{R:Vasc},  the epimorphism $M\onto N$ must then be an isomorphism.
\end{proof}

The following result generalizes Miyata's result \cite{Miy} as we do not need to assume that the given sequence is left exact.

\begin{theorem}\label{T:miyata}
An exact sequence $M\to N\to C\to 0$ is split exact \iff\    $N\iso M\oplus C$.
\end{theorem}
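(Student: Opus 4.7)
The forward direction is immediate from the definition of split exactness. For the reverse, fix an isomorphism $\phi\colon N\xrightarrow{\sim}M\oplus C$; by the split case of \Thm{T:semadd}, this yields the length identity $\len N=\len M\ssum\len C$. The plan is to first deduce injectivity of $f$, and then upgrade the resulting short exact sequence to a split one by invoking the acyclicity criterion.

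For injectivity, set $I:=f(M)\sub N$, so that $0\to I\to N\xrightarrow{g} C\to 0$ is exact. \Thm{T:semadd} gives $\len N\leq\len C\ssum\len I$; combining this with $\len N=\len M\ssum\len C=\len C\ssum\len M$ (commutativity of the shuffle sum, see Appendix~\ref{s:ssum}) and cancelling $\len C$ in the natural sum yields $\len M\leq\len I$. Conversely, since $I$ is an epimorphic image of $M$, semi-additivity on $0\to\ker f\to M\to I\to 0$ forces $\len I\leq\len M$. Hence $\len I=\len M$, and \Rem{R:Vasc} applied to the epimorphism $M\onto I$ forces $\ker f=0$, i.e.\ $f$ is injective.

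Producing the splitting is the main obstacle. With the exact sequence $0\to M\xrightarrow{f}N\xrightarrow{g}C\to 0$ now in hand and the equality $\len N=\len M\ssum\len C$ preserved, the plan is to appeal to the general acyclicity criterion \Thm{T:acycl}: this identity is precisely the vanishing condition for the ordinal Euler characteristic attached to the sequence, which forces it to be split. The subtlety is that the naive candidate section $s\colon c\mapsto\phi^{-1}(0,c)$ need not satisfy $g\circ s=\mathrm{id}_C$---one sees this already over Artinian local rings such as $R=k[x]/(x^2)$, where the composite $g\circ s$ can be a non-invertible endomorphism of $C$---so a genuine section must be reconstructed from the Euler-characteristic data supplied by \Thm{T:acycl} rather than read off directly from $\phi$. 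This reconstruction, built on the semi-additivity inequalities and \Cor{C:Vasc}, is the technical heart of the argument.
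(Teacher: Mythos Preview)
Your injectivity argument is correct and essentially identical to the paper's: both factor $f$ through its image, apply semi-additivity to the resulting short exact sequence, compare with the length identity $\len N=\len M\ssum\len C$, and invoke \Rem{R:Vasc}. (Your cancellation step in the shuffle sum is legitimate: since $\ssum$ is coefficient-wise addition in Cantor normal form, $\gamma\ssum\alpha\leq\gamma\ssum\beta$ does force $\alpha\leq\beta$.)

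The splitting step, however, is a genuine gap. \Thm{T:acycl} does not produce sections: it bounds the dimension of the top homology of a complex once the lower homologies are controlled. Applied to your already-exact $0\to M\to N\to C\to 0$, it tells you nothing new---all homology groups are zero. There is no ``Euler-characteristic data'' in that theorem from which a section could be reconstructed, and you yourself flag the reconstruction as ``the technical heart'' without supplying it. As stated, the proposal ends before the hard part begins.

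The paper closes the gap by an entirely different mechanism. Having established injectivity, it observes that for any finitely generated $H$ one still has $N\tensor H\iso (M\tensor H)\oplus(C\tensor H)$, so the \emph{same} length argument shows that $M\tensor H\to N\tensor H$ is injective. This means $M\to N$ is pure, and a pure inclusion of a finitely presented module is split (\cite[Theorem~7.14]{Mats}). The point is that the ordinal-length machinery is strong enough to detect injectivity but not splitting directly; purity is the bridge, and it is obtained by running the injectivity argument universally over all tensor twists.
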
 
\begin{proof} One direction is just the definition of split exact. 
%As for the other, let $K$ and $\bar M$ be the respective kernel and image  of $M\to N$. \Thm{T:semadd} applied to the two exact sequences $\Exactseq KM{\bar M}$ and $\Exactseq {\bar M}NC$ yields inequalities $\len {\bar M}+\len K\leq \len M$ and $\len N\leq\len C\ssum\len {\bar M}$, whereas the isomorphism $N\iso M\oplus C$ gives $\len N=\len M\ssum\len C$. The latter together with the second inequality then shows $\len M\leq\len {\bar M}$.
Let $\bar M$ be the image of $M$ and apply \Thm{T:semadd} to  $\Exactseq {\bar M}NC$ to get $\len N\leq\len C\ssum\len {\bar M}$. On the other hand, $N\iso M\oplus C$ yields $\len N=\len M\ssum\len C$, whence $\len M\leq\len {\bar M}$. Since $\bar M$ is a homomorphic image of $M$, they must be isomorphic by \Rem{R:Vasc}. Hence, we showed $M\to N$ is injective.   At this point we could invoke  \cite{Miy}, but we can as easily give a direct proof of splitness as follows.  Given a finitely generated $R$-module $H$,  since $M\tensor H\iso (N\tensor H)\oplus (C\tensor H)$,   the same argument  applied to the tensored exact sequence 
$$
M\tensor H\to  N\tensor H\to C\tensor H\to 0,
$$
  gives the injectivity of  the first arrow. We therefore showed that $M\to N$ is pure, whence split by \cite[Theorem 7.14]{Mats}. 
\end{proof}

\begin{theorem}\label{T:nonsingsub}
Let $X$ be a non-singular variety over an \acf\ $k$. Then a closed subscheme $Y\sub X$ with ideal of definition $\mathcal I$ is non-singular \iff\ $\Omega_{X/k}\tensor\loc_Y$ is locally isomorphic to $\mathcal I/\mathcal I^2\oplus \Omega_{Y/k} $.
\end{theorem}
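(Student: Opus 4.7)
The plan is to apply \Thm{T:miyata} to the standard conormal right-exact sequence
$$
\mathcal I/\mathcal I^2 \xrightarrow{d} \Omega_{X/k}\tensor\loc_Y \to \Omega_{Y/k} \to 0,
$$
which is exact on the right for any closed subscheme $Y\sub X$. The forward direction is classical: if $Y$ is non-singular, then both $X$ and $Y$ are smooth over $k$, so $Y\into X$ is locally a regular immersion; hence $d$ is injective and $\mathcal I/\mathcal I^2$ is locally free, and since $\Omega_{Y/k}$ is locally free too, the short exact sequence splits locally, yielding the asserted isomorphism.

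The novel content is the converse. Assume the local isomorphism $\Omega_{X/k}\tensor\loc_Y\iso \mathcal I/\mathcal I^2\oplus\Omega_{Y/k}$ and pass to a stalk at a point $y\in Y$: we get a right exact sequence of finitely generated modules over the Noetherian local ring $\loc_{Y,y}$ whose middle term is isomorphic to the direct sum of the outer ones, so \Thm{T:miyata} forces the conormal sequence to be \emph{split exact} at $y$. In particular, both $\mathcal I/\mathcal I^2$ and $\Omega_{Y/k}$ embed as direct summands of $\Omega_{X/k}\tensor\loc_Y$, which is locally free since $X$ is non-singular, and hence both become locally free over $Y$.

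Converting local freeness of $\Omega_{Y/k}$ into non-singularity of $Y$ is the main obstacle. At a closed point $y$, let $r$ and $c$ be the ranks of $\Omega_{Y/k}$ and $\mathcal I/\mathcal I^2$, so that $r+c=\op{dim}X$. Since $k$ is algebraically closed and $y$ is closed, $\Omega_{Y/k}\tensor k(y)\iso \maxim_y/\maxim_y^2$ has $k$-dimension $r$, which forces $\op{dim}_y Y\leq r$ by Krull; conversely, local freeness of $\mathcal I/\mathcal I^2$ combined with Nakayama shows that $\mathcal I$ is generated at $y$ by $c$ elements, whence $\op{codim}_y(Y,X)\leq c$ and $\op{dim}_y Y\geq r$. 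Equality then yields regularity of $\loc_{Y,y}$. The subtlety is that \Thm{T:miyata} only supplies an abstract algebraic splitting; one must exploit that the splitting also forces $\mathcal I/\mathcal I^2$ to be locally free, so that its rank matches the codimension and the rank of $\Omega_{Y/k}$ matches the geometric dimension of $Y$.
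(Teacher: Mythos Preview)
Your proof is correct and follows essentially the same route as the paper's: both apply \Thm{T:miyata} to the conormal sequence to obtain split exactness and local freeness of $\Omega_{Y/k}$ (and of $\mathcal I/\mathcal I^2$). The only difference is cosmetic: the paper then simply invokes \cite[Theorem~8.17]{Hart}, which packages exactly the equivalence ``$Y$ non-singular $\Leftrightarrow$ $\Omega_{Y/k}$ locally free and the conormal sequence left exact,'' whereas you unpack that citation by hand via the dimension count $r+c=\dim X$, $\dim_k\maxim_y/\maxim_y^2=r$, and Krull's height theorem. Your argument is thus a self-contained version of the same proof; nothing is missing, and the ``subtlety'' you flag about needing local freeness of $\mathcal I/\mathcal I^2$ is precisely what the split exactness from \Thm{T:miyata} hands you.
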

\begin{proof}
Since $X$ is non-singular, its module of differentials $\Omega_{X/k}$ is locally free (\cite[Theorem 8.15]{Hart}), whence so is $\Omega_{X/k}\tensor\loc_Y$, and therefore so is its direct summand $\Omega_{Y/k}$. Moreover, by \Thm{T:miyata}, the conormal sequence
$$
\mathcal I/\mathcal I^2\to \Omega_{X/k}\tensor\loc_Y\to \Omega_{Y/k}\to 0
$$
 is then split exact, and the result now follows from \cite[Theorem 8.17]{Hart}.
\end{proof}  

\begin{theorem}\label{T:retract}
Let $A$ be a finitely generated $R$-algebra,   $I\sub A$ an ideal, and $\bar A:=A/I$.  The closed immersion $\op{Spec}\bar A\sub \op{Spec}(A/I^2)$ is a retract over $R$ \iff\ we have an isomorphism of $A$-modules
\begin{equation}\label{eq:condiff}
\Omega_{A/R}\big/I\Omega_{A/R}\iso \Omega_{\bar A/R}\oplus I/I^2.
\end{equation} 
\end{theorem}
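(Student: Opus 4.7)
The plan is to translate the statement into the splitting of the conormal sequence and then invoke \Thm{T:miyata}. Recall the standard right-exact sequence of finitely generated $\bar A$-modules
\begin{equation*}
I/I^2\:\map{\delta}\:\Omega_{A/R}\tensor_A\bar A\to \Omega_{\bar A/R}\to 0,
\end{equation*}
whose middle term is naturally isomorphic to $\Omega_{A/R}/I\Omega_{A/R}$. All three terms are finitely generated $\bar A$-modules (since $A$ is a finitely generated algebra over the Noetherian ring $R$), so \Thm{T:miyata} applies directly and yields: this sequence is split exact \iff\ the isomorphism \eqref{eq:condiff} holds.

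It therefore suffices to show that the conormal sequence is split exact \iff\ the closed immersion $\op{Spec}\bar A\sub\op{Spec}(A/I^2)$ admits an $R$-retraction, that is, an $R$-algebra section $s\colon\bar A\to A/I^2$ of the canonical surjection $\pi\colon A/I^2\onto\bar A$. The key step is the classical bijection between such sections and $R$-derivations $D\colon A\to I/I^2$ whose restriction to $I$ is the canonical surjection $I\onto I/I^2$: given $s$, set $D(a):=(a+I^2)-s(a+I)$, which lands in $\ker\pi=I/I^2$ and satisfies the Leibniz rule because the product of two elements of $I/I^2$ vanishes in $A/I^2$; conversely, $s(a+I):=(a+I^2)-D(a)$ is well-defined (the ambiguity in the lift $a$ is absorbed by $D|_I$) and multiplicative for the same reason. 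By the universal property of $\Omega_{A/R}$, such derivations $D$ correspond bijectively to $\bar A$-linear maps $\rho\colon \Omega_{A/R}\tensor_A\bar A\to I/I^2$, and the condition $D|_I=$\ canonical surjection transfers into the condition $\rho\after\delta=\op{id}_{I/I^2}$, namely that $\rho$ is a splitting of $\delta$.

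Stringing the two equivalences together proves the theorem. The main content lies in the derivation-section bijection, which is classical but demands a careful Leibniz-rule check modulo $I^2$; the rest is a mechanical application of \Thm{T:miyata}, entirely parallel to the proof of \Thm{T:nonsingsub}.
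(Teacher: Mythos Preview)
Your argument is correct and follows the same route as the paper: apply \Thm{T:miyata} to the conormal sequence, then use the equivalence between splittings of that sequence and $R$-retractions of $\op{Spec}(A/I^2)\to\op{Spec}\bar A$. The only difference is that the paper dispatches the second equivalence by citing \cite[Proposition 16.12]{Eis}, whereas you spell out the standard bijection between sections $s\colon\bar A\to A/I^2$ and derivations $D\colon A\to I/I^2$ restricting to the canonical surjection on $I$; your unpacking is accurate and makes the proof self-contained.
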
 
\begin{proof}
One direction is easy, and if \eqref{eq:condiff} holds, then the conormal sequence
$$
  I/  I^2\to \Omega_{A/R}\big/I\Omega_{A/R}\to \Omega_{\bar A/R}\to 0
$$
is split exact by \Thm{T:miyata}, so that the result follows from \cite[Proposition 16.12]{Eis}.
\end{proof} 

\begin{remark}\label{R:cotan}
We can also formulate a necessary and sufficient condition for the cotangent sequence to be split, although I do not know of any consequences of this fact: given \homo{s} $R\to S\to T$, the sequence 
$$
 {T\tensor_S\Omega_{S/R}}\to{\Omega_{T/R}}\to{\Omega_{S/R}}\to 0
$$
is split exact \iff\ ${\Omega_{T/R}}\iso ({T\tensor_S\Omega_{S/R}})\oplus{\Omega_{S/R}}$.
\end{remark}

%\comment{The next result is immediate since the conditions imply that $X$ is smooth by \Thm{T:nonsingsub}, whence in particular rigid}
%\begin{theorem}\label{T:infdef}
%Let $k$ be a field and $X\sub \mathbb A_k^n$   a closed subscheme with ideal of definition $\mathcal I$. If  $ \Omega_{X/k}\oplus \mathcal I/\mathcal I^2$ is free of rank $n$, then $X$ has no non-trivial first-order defomations.
%\end{theorem} 
%\begin{proof}
%Let $S$ be the polynomial ring over $k$ in $n$ variables and $A:=S/I$ the coordinate ring of $X$. Since $\Omega{S/k}$ is free of rank $n$, our assumption together with \Thm{T:miyata} shows that the conormal sequence
%$$
%I/I^2\to \Omega_S/I\Omega_S\to \Omega_A\to 0
%$$
% is split exact. Hence so is the dual sequence
% $$
% \Exactseq{\hom{}{\Omega_A}A}{\hom{}\Omega_S/I\Omega_SA}{\hom{}{I/I^2}A}
% $$
% As the cokernel $T^1_{A/k}$ of the latter classifies all first-order infinitesimal deformations of $A$ (see, for instance, \cite[Exercise 16.8]{Eis}), the assertion follows.
%\end{proof} 
%
\begin{proposition}\label{P:complc}
Let $(R,\maxim)$ be a $d$-dimensional local \CM\ ring with canonical module $\omega_R$. Assume there exist exact sequences $\Exactseq NMX$ and $\Exactseq MNY$. For all $e$ such that  $\op{dim}X<e\leq \op{depth}Y$, we have 
$$
\ext R{d-e}M{\omega_R}\iso \ext R{d-e}N{\omega_R}.
$$
\end{proposition}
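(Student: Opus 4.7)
The plan is to apply the functor $\ext R{\bullet}{-}{\omega_R}$ to each of the two given short exact sequences and combine the resulting long exact sequences into an isomorphism. The key vanishing tool is local duality for the CM local ring $(R,\maxim)$ with canonical module $\omega_R$: it identifies $\ext R{j}{Z}{\omega_R}$ as the Matlis dual of $\op{H}^{d-j}_\maxim(Z)$ for any finitely generated $R$-module $Z$, and combined with the standard vanishing of local cohomology this yields $\ext R{j}{Z}{\omega_R}=0$ whenever $j<d-\op{dim}Z$ or $j>d-\op{depth}Z$.

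First I would apply $\ext R{\bullet}{-}{\omega_R}$ to the first sequence $\Exactseq NMX$. The hypothesis $e>\op{dim}X$ forces $\ext R{d-e}{X}{\omega_R}=0$, so the induced map $a\colon\ext R{d-e}{M}{\omega_R}\into\ext R{d-e}{N}{\omega_R}$ is injective. Applying the functor to the second sequence $\Exactseq MNY$, the hypothesis $e\leq\op{depth}Y$ forces $\ext R{d-e+1}{Y}{\omega_R}=0$, so the induced map $b\colon\ext R{d-e}{N}{\omega_R}\onto\ext R{d-e}{M}{\omega_R}$ is surjective.

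To upgrade $a$ and $b$ to isomorphisms, I would study the composite endomorphism $b\circ a=\ext R{d-e}{\alpha\beta}{\omega_R}$ of $\ext R{d-e}{M}{\omega_R}$, where $\alpha\beta\colon M\to M$ is the composition of the two given monomorphisms. Its cokernel $C$ fits in an auxiliary short exact sequence $\Exactseq YCX$. Verifying $\ext R{d-e+1}{C}{\omega_R}=0$ from the long exact sequence of this auxiliary SES would force $b\circ a$ to be surjective and hence, by Vasconcelos (\Cor{C:Vasc}), an automorphism. This makes $a$ a split monomorphism; running the symmetric analysis on the endomorphism $a\circ b$ of $\ext R{d-e}{N}{\omega_R}$, whose cokernel $D=N/\beta\alpha(N)$ is the other extension $\Exactseq XDY$, would then force $b$ to be an isomorphism as well, so that both $a$ and $b$ are iso.

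The main obstacle is the vanishing $\ext R{d-e+1}{C}{\omega_R}=0$. The long exact sequence for $\Exactseq YCX$ reduces it to $\ext R{d-e+1}{Y}{\omega_R}=0$ (already available from $e\leq\op{depth}Y$) and $\ext R{d-e+1}{X}{\omega_R}$, which in the boundary case $e=\op{dim}X+1$ is precisely the non-vanishing top Ext $\ext R{d-\op{dim}X}{X}{\omega_R}$. Closing this gap requires either a subtle application of the general acyclicity criterion \Thm{T:acycl} announced earlier, or a direct length-theoretic argument using semi-additivity (\Thm{T:semadd}) together with the structural consequence $\op{dim}X<\op{dim}Y$ forced by the mutual embedding of $M$ and $N$ (which via \Thm{T:semadd} gives $\len M=\len N$ and hence constrains the degrees of $\len X$ and $\len Y$).
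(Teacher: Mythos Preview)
Your opening two paragraphs are correct and in fact cleaner than the paper's route: the paper passes to the completion, takes local cohomology of the two short exact sequences, and then Matlis-dualizes (via Grothendieck duality) to obtain exactly the same injection $a\colon\ext R{d-e}M{\omega_R}\hookrightarrow\ext R{d-e}N{\omega_R}$ and surjection $b\colon\ext R{d-e}N{\omega_R}\twoheadrightarrow\ext R{d-e}M{\omega_R}$ that you extract directly from the long exact Ext sequence and the standard vanishing range. At that point the paper simply writes ``the result now follows from \Cor{C:subim}'' and stops.

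The difficulty is that \Cor{C:subim} requires both the monomorphism and the epimorphism to go in the \emph{same} direction, whereas here $a$ and $b$ point opposite ways; and an injection $P\hookrightarrow Q$ together with a surjection $Q\twoheadrightarrow P$ certainly does not force $P\iso Q$ (take $P=k$, $Q=k^2$ over a field). So the paper's one-line finish is not justified as written. Your instinct to look for additional structure---by studying the endomorphism $b\circ a=\ext R{d-e}{\alpha\beta}{\omega_R}$ and the auxiliary extension $\Exactseq YCX$---is therefore well placed, and the obstruction you isolate in the boundary case $e=\dim X+1$ (where $\ext R{d-e+1}X{\omega_R}=\ext R{d-\dim X}X{\omega_R}$ is the nonvanishing top Ext) is a genuine gap that neither your argument nor the paper's bare citation of \Cor{C:subim} actually closes.
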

\begin{proof}
By faithfully flat descent, we may pass to the completion of $R$, and therefore assume from the start that $R$ is complete. By Grothendieck vanishing, the local cohomology groups $\op H^e_\maxim(X)$ and $\op H^{e-1}_\maxim(Y)$ vanish. Taking  local cohomology of the two respective sequences therefore yields
\begin{align*}
&\op H^e_\maxim(N)\to \op H^e_\maxim(M)\to \op 0=H^e_\maxim(X)\\
\op H^{e-1}_\maxim(Y)=0&\to \op H^e_\maxim(M)\to \op H^e_\maxim(N)
\end{align*}
Taking Matlis duals and using Grothendieck duality (\cite[Theorem 3.5.8]{BH}), we get exact sequences
\begin{align*}
0\to&\ext R{d-e}M{\omega_R}\to \ext R{d-e}N{\omega_R}\\
&\ext R{d-e}N{\omega_R}\to \ext R{d-e}M{\omega_R}\to 0
\end{align*}
and the result now follows from \Cor{C:subim}.
\end{proof}

Inspired by the previous results, we introduce the following measures for two modules $M$ and $N$ to be non-isomorphic: let $\kappa(M,N)$ (respectively, $\gamma(M,N)$) be the infimum of all $\len{\op{ker}f}$ (respectively,  $\len{\op{coker}f}$) for $f\in\hom RMN$. We may rephrase \Cor{C:subim} as
%
%\begin{corollary}\label{C:noniso}
\begin{equation}\label{eq:noniso}
  M\iso N   \qquad\text{\iff} \qquad\gamma(M,N)+\kappa(M,N)=0. 
\end{equation} 
% \qed
%\end{corollary}

In fact, as the previous examples suggest, $\kappa$ is often bounded by $\gamma$: 
 
\begin{lemma}\label{L:gk}
If $\lenmod RM=_e\lenmod RN$ and $\deg\gamma(M,N)\leq e-1$, for some $e$, then $\deg\kappa(M,N)\leq e-1$.
\end{lemma}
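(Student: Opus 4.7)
The plan is to pick a single homomorphism $f\colon M\to N$ with $\lenmod R{\op{coker}f}=\gamma(M,N)$ — such an $f$ exists because every nonempty set of ordinals has a minimum — and to bound $\deg\lenmod R{\op{ker}f}\leq e-1$ for this $f$; the conclusion follows since $\kappa(M,N)\leq\lenmod R{\op{ker}f}$. Set $K:=\op{ker}f$, $I:=\op{im}f$, $C:=\op{coker}f$; by the choice of $f$, $\deg\lenmod RC\leq e-1$, and the map factors through the two short exact sequences $\Exactseq INC$ and $\Exactseq KMI$.

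The first step is to apply \Thm{T:semadd} to $\Exactseq INC$, obtaining
\[
\lenmod RC+\lenmod RI\leq\lenmod RN\leq\lenmod RC\ssum\lenmod RI.
\]
Two easy properties of the truncation $(\cdot)^+_e$ are needed: monotonicity on ordinals (immediate from lexicographic comparison of Cantor normal forms) and compatibility with $\ssum$ (since $\ssum$ adds Cantor coefficients position by position). Combining $\lenmod RI\leq\lenmod RC+\lenmod RI$ with monotonicity on the left inequality gives $(\lenmod RI)^+_e\leq(\lenmod RN)^+_e$; combining $(\lenmod RC)^+_e=0$ (forced by $\deg\lenmod RC\leq e-1$) with the right inequality gives $(\lenmod RN)^+_e\leq(\lenmod RI)^+_e$. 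So $(\lenmod RI)^+_e=(\lenmod RN)^+_e$, and the hypothesis $\lenmod RM=_e\lenmod RN$ then yields $(\lenmod RI)^+_e=(\lenmod RM)^+_e$.

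The second step is to apply \Thm{T:semadd} to $\Exactseq KMI$ to get $\lenmod RI+\lenmod RK\leq\lenmod RM$. Setting $\tau:=(\lenmod RM)^+_e=(\lenmod RI)^+_e\leq\lenmod RI$ and using monotonicity of ordinal addition in the first argument, this upgrades to $\tau+\lenmod RK\leq\lenmod RM$. Now $\tau$ has order at least $e$ and $\tau=_e\lenmod RM$ by construction, so \Lem{L:eineq} with $\alpha=\tau$, $\beta=\lenmod RM$, $\lambda=\lenmod RK$ delivers $\deg\lenmod RK\leq e-1$. The main obstacle is really the alignment $(\lenmod RI)^+_e=(\lenmod RM)^+_e$ in step one — juggling the two bookkeeping properties of $(\cdot)^+_e$ to squeeze the semi-additivity sandwich — after which \Lem{L:eineq} closes the argument mechanically.
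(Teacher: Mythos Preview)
Your proof is correct and follows essentially the same route as the paper's: pick $f$ realizing $\gamma(M,N)$, use semi-additivity on $\Exactseq INC$ to align the $e$-truncations of $\lenmod RI$ and $\lenmod RM$, then use semi-additivity on $\Exactseq KMI$ together with \Lem{L:eineq} to bound $\deg\lenmod RK$. Your handling of \Lem{L:eineq} is in fact cleaner than the paper's---by passing to $\tau=(\lenmod RI)^+_e$ you explicitly secure the hypothesis $\order{}\alpha\geq e$, whereas the paper applies the lemma directly with $\alpha=\lenmod RI$ and leaves that verification implicit.
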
 
\begin{proof}
By assumption, we can find a morphism $M\to N$ whose cokernel $C$ has length $\gamma=\gamma(M,N)$. We use the notation from \eqref{eq:splitord} but drop the subscript $e$ as this will not change throughout the proof.  
Let $K$ and  $\bar M$  the respective kernel and image, and let $\kappa,\bar\mu,\mu,\nu$ be the lengths of $K,\bar M,M,N$.   By assumption, $\mu^+=\nu^+$ and $\gamma=\gamma^-$.  By \Thm{T:semadd}, the exact sequence $\Exactseq {\bar M}NC$ gives 
\begin{equation}\label{eq:abq}
\gamma^-+\nu^++\nu^-\leq \bar\mu^++\bar\mu^-\leq \nu^+\ssum\gamma^-\ssum\nu^-,
\end{equation} 
 from which it follows that $\nu^+=\bar\mu^+$. Applying semi-additivity instead to $\Exactseq KM{\bar M}$ gives $\bar\mu+\kappa\leq\mu$. Since $\bar\mu^+=\nu^+=\mu^+$, we get $\bar\mu=_e\mu $, and hence   $\deg\kappa<e$ by \Lem{L:eineq}. Since $\kappa(M,N)\leq \kappa$, our claim follows.
\end{proof} 

Let us say that $M$ and $N$ are \emph{isomorphic at level $e$}, denoted $M\iso_e N$, if there exists a morphism $M\to N$, called an \emph{isomorphism at level $e$}, whose kernel and cokernel both have  dimension  strictly less than $e$. We similarly define an \emph{epimorphism at level $e$} as one whose cokernel has dimension strictly less than $e$.  Of course $\iso_0$ just means isomorphic,   whereas $\iso_{\op{dim}R}$ gives the notion of being  generically isomorphic.  

\begin{proposition}\label{P:isolevel}
 For $e$ equal to  the degree of $\kappa(M,N)+\gamma(M,N)$, we have an isomorphism $M\iso_{e+1}N$ at level $e+1$.  
\end{proposition}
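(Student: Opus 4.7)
The goal is to exhibit a single morphism $f\colon M\to N$ whose kernel and cokernel both have dimension at most $e$; by \Thm{T:dim}, this is the same as saying $f$ is an isomorphism at level $e+1$. Since $\deg(\alpha+\beta)=\max(\deg\alpha,\deg\beta)$ for ordinals, the hypothesis $e=\deg(\kappa(M,N)+\gamma(M,N))$ forces $\deg\kappa(M,N)\le e$ and $\deg\gamma(M,N)\le e$. Because the class of ordinals is well-ordered, both infima are attained, so we may pick $f,g\in\hom RMN$ with $\len{\op{coker}f}=\gamma(M,N)$ and $\len{\op{ker}g}=\kappa(M,N)$.

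The heart of the argument is to upgrade these minimalities to the equality $\lenmod RM=_{e+1}\lenmod RN$. Let $\bar M=\op{im}f$ and $\bar M_g=\op{im}g$, with respective lengths $\bar\mu,\bar\mu_g$. Applying \Thm{T:semadd} to $\Exactseq{\bar M}N{\op{coker}f}$ yields $\gamma(M,N)+\bar\mu\le\nu\le\gamma(M,N)\ssum\bar\mu$; since $\deg\gamma(M,N)\le e$, the $\gamma$-contributions are invisible in degrees $\ge e+1$ (absorbed on the left, merely termwise added on the right), so $\bar\mu=_{e+1}\nu$. The monotonicity $\bar\mu\le\mu$, from semi-additivity applied to $\Exactseq{\op{ker}f}M{\bar M}$, then gives $\nu^+_{e+1}\le\mu^+_{e+1}$. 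Running the entirely symmetric computation with $g$ in place of $f$ produces $\bar\mu_g=_{e+1}\mu$ and $\bar\mu_g\le\nu$, hence $\mu^+_{e+1}\le\nu^+_{e+1}$. The two bounds together deliver $\lenmod RM=_{e+1}\lenmod RN$.

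With this equality in hand, I would invoke \Lem{L:gk} with its parameter $e$ replaced by $e+1$: its hypotheses $\lenmod RM=_{e+1}\lenmod RN$ and $\deg\gamma(M,N)\le e$ are precisely what we have just established. Crucially, a close reading of the \emph{proof} of \Lem{L:gk} (rather than its statement) shows that the specific morphism $f$ chosen to realise $\gamma(M,N)$ already satisfies $\deg\len{\op{ker}f}\le e$. Combined with $\deg\len{\op{coker}f}=\deg\gamma(M,N)\le e$, an appeal to \Thm{T:dim} identifies $f$ as an isomorphism at level $e+1$.

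The main obstacle is the mismatch between the two invariants $\kappa(M,N)$ and $\gamma(M,N)$---each witnessed by its own optimal morphism in general---and the conclusion, which demands a \emph{single} morphism enjoying both kinds of minimality simultaneously. This mismatch is closed by the two-sided high-degree comparison producing $\lenmod RM=_{e+1}\lenmod RN$; once this equality is available, the internal mechanism of the proof of \Lem{L:gk} transfers the smallness of the invariant $\kappa(M,N)$ to the actual kernel length $\len{\op{ker}f}$ of the cokernel-optimal morphism, which is exactly what is needed.
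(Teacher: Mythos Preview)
Your argument is correct, but it follows a genuinely different route from the paper's own proof. You stay entirely within the global ordinal-length calculus: using semi-additivity on both the $\gamma$-optimal and the $\kappa$-optimal morphisms you squeeze out $\lenmod RM=_{e+1}\lenmod RN$, and then you re-enter the proof (not merely the statement) of \Lem{L:gk} to see that the kernel of the $\gamma$-optimal map $f$ already has length of degree $\le e$. This is valid; the only stylistic blemish is that you rely on the internal mechanism of \Lem{L:gk} rather than its statement---in a self-contained write-up you would simply inline that short computation.

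The paper instead argues by localization. It picks morphisms realizing $\kappa$ and $\gamma$, then for any prime $\pr$ of dimension $>e$ observes that both the witnessing kernel and cokernel vanish after localizing, so $\kappa(M_\pr,N_\pr)=\gamma(M_\pr,N_\pr)=0$; by \eqref{eq:noniso} this forces $M_\pr\iso N_\pr$, and then \Rem{R:Vasc} upgrades the (now surjective) $\gamma$-optimal map $g_\pr$ to an isomorphism. Hence $\ker g$ vanishes at all such primes, giving $\dim(\ker g)\le e$. Both approaches end up certifying the same morphism (the one realizing $\gamma$) as the level-$(e+1)$ isomorphism. The paper's route is shorter and more geometric, leaning on the already-packaged \eqref{eq:noniso}; yours is more in keeping with the paper's ordinal-arithmetic theme and avoids localization altogether, at the cost of a slightly longer computation.
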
 
\begin{proof}
By assumption, there exist $f\colon M\to N$ and $g\colon M\to N$ such that $K:=\op{ker}(f)$ and $C:=\op{coker}(g)$ have respective  lengths $\kappa(M,N)$ and $\gamma(M,N)$. By \Thm{T:dim}, this means that $K$ and $C$ have dimension at most $e$. Let $\pr$ be an arbitrary prime ideal of dimension strictly bigger than $e$. Since $K_\pr$ and $C_\pr$ are then both zero, $\kappa(M_\pr,N_\pr)=\gamma(M_\pr,N_\pr)=0$, and hence $M_\pr\iso N_\pr$ by \eqref{eq:noniso}. Since they have therefore the same length, the epimorphism $g_\pr\colon M_\pr\to N_\pr$ must be an isomorphism by \Rem{R:Vasc}. Let $H$ be the kernel of $g$. We showed that $H_\pr=0$. Since this holds for all $\pr$ of dimension $>e$, we must have $\op{dim}(H)\leq e$, showing that $g$ is an isomorphism at level $e+1$.  
\end{proof} 

\begin{corollary}\label{C:isolevel}
%For $e\geq 0$, we have  
Given $e\geq 0$, we have an isomorphism $M\iso_{e+1}N$  at level $e+1$  \iff\ $\lenmod RM=_{e+1}\lenmod RN$ and $\gamma(M,N)\leq e$.
\end{corollary}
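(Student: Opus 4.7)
I split into the two implications; both rest on the same machinery. The forward direction comes from semi-additivity applied to the factorization of an isomorphism at level $e+1$ through its image, and the backward direction is a direct invocation of \Lem{L:gk} (with $e+1$ in place of $e$) followed by \Prop{P:isolevel}.

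For the forward direction, suppose $f\colon M\to N$ realizes an isomorphism at level $e+1$, and let $K$, $\bar M$, $C$ denote its kernel, image, and cokernel respectively. By definition $\op{dim}K\leq e$ and $\op{dim}C\leq e$, so by \Thm{T:dim} both $\len K$ and $\len C$ have degree at most $e$. Since $\gamma(M,N)\leq\len C$, this already gives $\deg\gamma(M,N)\leq e$. For the length equality, apply \Thm{T:semadd} to the two short exact sequences $\Exactseq K M{\bar M}$ and $\Exactseq{\bar M}N C$: in each case the length of the middle module is sandwiched between $\lenmod R{\bar M}+\delta$ and $\lenmod R{\bar M}\ssum\delta$, with $\delta$ equal to $\len K$, respectively $\len C$, an ordinal of degree at most $e$. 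A direct inspection of Cantor normal form shows that both $\alpha+\delta$ and $\alpha\ssum\delta$ have the same $+_{e+1}$-truncation as $\alpha$ whenever $\deg\delta\leq e$; this collapses the sandwich to $\lenmod RM=_{e+1}\lenmod R{\bar M}=_{e+1}\lenmod RN$.

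For the backward direction, \Lem{L:gk} applied with parameter $e+1$ in place of $e$ takes our hypotheses $\lenmod RM=_{e+1}\lenmod RN$ and $\deg\gamma(M,N)\leq e$ and produces $\deg\kappa(M,N)\leq e$. Hence $e':=\deg(\kappa(M,N)+\gamma(M,N))\leq e$, and \Prop{P:isolevel} yields an isomorphism $M\iso_{e'+1}N$ at level $e'+1\leq e+1$, which is in particular an isomorphism at level $e+1$.

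The hard part is essentially nothing substantive — all the real work already sits in \Lem{L:gk} and \Prop{P:isolevel}. The two points requiring care are the index shift (level $e+1$ corresponds to length-degree $\leq e$ for the kernel and cokernel, matching the $e-1$ that appears in \Lem{L:gk} once we substitute $e+1$ for $e$) and the routine Cantor-normal-form observation invoked in the forward direction.
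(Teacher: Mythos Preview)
Your proof is correct and follows the paper's approach: the paper's own proof simply states that the non-trivial direction follows from \Prop{P:isolevel} and \Lem{L:gk}, which is exactly your backward direction (with the same index shift $e\mapsto e+1$). You additionally spell out the forward direction via semi-additivity and Cantor-normal-form truncation, which the paper leaves implicit as the trivial direction.
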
 
\begin{proof}
The non-trivial direction   follows from \Prop{P:isolevel} and \Lem{L:gk}.
\end{proof}

\section{Length criterion for acyclicity}
Given a   complex     
\begin{equation}\label{eq:complex}
 \mathcal M \colon 0\to M_t\to M_{t-1}\to\dots\to M_1\to M_0\to 0
\end{equation} 
let us define its \emph{lower length} $\lowlen{\mathcal M}$ and its \emph{upper length} $\hilen{\mathcal M}$ as the ordinals
\begin{align*}
\lowlen {\mathcal M}&:= \sum_{i\equiv t+1\mod 2} \lenmod R{M_i}\\
\hilen{\mathcal M}&:= \Ssum_{i\equiv t\mod 2} \lenmod R{M_i},
\end{align*}
where we use the ascending order of the index set in the first sum.
% (the order in the second sum does not matter).

\begin{lemma}\label{L:lowhi}
If $\mathcal M$ is exact, then $\lowlen{\mathcal M}\leq \hilen{\mathcal M}$.
%\begin{equation}\label{eq:lowhi}
%\lowlen{\mathcal M}\leq \hilen{\mathcal M}.
%\end{equation} 
%Moreover, if all lengths are finite, then \eqref{eq:lowhi} is an equality.
\end{lemma}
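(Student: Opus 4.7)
The plan is to prove the inequality by induction on $t$. The base cases $t = 0$ (where $M_0 = 0$) and $t = 1$ (where $M_0 \iso M_1$) are immediate. For the inductive step, I set $Z_1 := \ker(M_1 \to M_0)$ with length $\zeta_1 := \lenmod R{Z_1}$, which decomposes $\mathcal M$ into the short exact sequence $\Exactseq{Z_1}{M_1}{M_0}$ and the shorter exact complex
$$
\mathcal M': 0 \to M_t \to M_{t-1} \to \dots \to M_2 \to Z_1 \to 0.
$$
Semi-additivity (\Thm{T:semadd}) yields both bounds $\ell_0 + \zeta_1 \leq \ell_1 \leq \ell_0 \ssum \zeta_1$, writing $\ell_i := \lenmod R{M_i}$, while the induction hypothesis gives $\lowlen{\mathcal M'} \leq \hilen{\mathcal M'}$.

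Next I compare the $\lowlen$ and $\hilen$ of $\mathcal M$ and $\mathcal M'$, treating the two parities of $t$ separately, since $Z_1$ joins $\lowlen{\mathcal M'}$ when $t$ is even and $\hilen{\mathcal M'}$ when $t$ is odd. When $t$ is even, $M_1$ leads the ordinary sum $\lowlen{\mathcal M}$ and is replaced by $Z_1$ in $\lowlen{\mathcal M'} = \zeta_1 + \ell_3 + \dots + \ell_{t-1}$, while $\hilen{\mathcal M} = \ell_0 \ssum \hilen{\mathcal M'}$. Substituting the upper bound $\ell_1 \leq \ell_0 \ssum \zeta_1$ and then repeatedly applying the identity $(\alpha \ssum \beta) + \gamma \leq \alpha \ssum (\beta + \gamma)$ to pull $\ell_0$ outside the running ordinary sum, I get
$$
\lowlen{\mathcal M} \leq (\ell_0 \ssum \zeta_1) + \ell_3 + \dots + \ell_{t-1} \leq \ell_0 \ssum \lowlen{\mathcal M'} \leq \hilen{\mathcal M}.
$$
When $t$ is odd, instead $\lowlen{\mathcal M} = \ell_0 + \lowlen{\mathcal M'}$ and $\hilen{\mathcal M'} = \zeta_1 \ssum \ell_3 \ssum \dots \ssum \ell_t$. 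Invoking the induction hypothesis, the companion identity $\alpha + (\beta \ssum \gamma) \leq (\alpha + \beta) \ssum \gamma$, and the lower bound $\ell_0 + \zeta_1 \leq \ell_1$ gives the parallel chain
$$
\lowlen{\mathcal M} \leq \ell_0 + \hilen{\mathcal M'} \leq (\ell_0 + \zeta_1) \ssum \ell_3 \ssum \dots \ssum \ell_t \leq \hilen{\mathcal M}.
$$

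The main obstacle is not the induction itself but the parity bookkeeping together with the two mixed associativity-type inequalities relating ordinary and shuffle sums. These are equivalent to each other by the commutativity of $\ssum$ and are standard facts about the Hessenberg natural sum (verifiable coefficient-wise in Cantor normal form), so I would cite them from Appendix~\ref{s:ssum} rather than reprove them here.
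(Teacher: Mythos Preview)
Your proof is correct and follows essentially the same strategy as the paper: both break $\mathcal M$ into short exact sequences via the cycle modules $Z_i$, apply semi-additivity to each piece, and then regroup using the mixed inequalities $(\alpha\ssum\beta)+\gamma\leq\alpha\ssum(\beta+\gamma)$ and $\alpha+(\beta\ssum\gamma)\leq(\alpha+\beta)\ssum\gamma$; you simply package this as an induction on $t$, whereas the paper writes out a single chain of inequalities for all $Z_i$ at once. One small correction: those two ordinal inequalities are \emph{not} equivalent to each other by commutativity of $\ssum$ alone (ordinary $+$ is not commutative, so swapping the roles does not turn one into the other), and neither is stated explicitly in Appendix~\ref{s:ssum}; however, as you say, both are immediate from a coefficient-wise comparison in Cantor normal form, and the paper's own ``ordinal arithmetic'' step invokes them just as implicitly.
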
 
\begin{proof}
We can break the   exact sequence $\mathcal M$ into short exact sequences
\begin{equation}\label{eq:shex}
 \Exactseq{Z_i}{M_{i}}{Z_{i-1}}
\end{equation} 
where $Z_0=M_0$ and $Z_{t-1}=M_t$. Let $\zeta_i$ and $\mu_i$ be the respective lengths of $Z_i$ and $M_i$. By \Thm{T:semadd}, we have for each $i$ an inequality
\begin{equation}\label{eq:zetai1}
\zeta_{i-1}+\zeta_i\leq\mu_i\leq\zeta_{i-1}\ssum\zeta_i.
\end{equation} 
If $t$ is even, then $\lowlen{\mathcal M}=\mu_1+\mu_3+\dots+\mu_{t-1}$. Using  respectively the upperbounds in \eqref{eq:zetai1} for odd $i$,  ordinal arithmetic, and the lowerbounds in \eqref{eq:zetai1} for even $i$,  gives 
\begin{align*}
\lowlen{\mathcal M}&\leq (\zeta_0\ssum\zeta_1)+(\zeta_2\ssum\zeta_3)+\dots+(\zeta_{t-2}\ssum\zeta_{t-1})\\
&\leq \mu_0\ssum (\zeta_1+\zeta_2)\ssum  \dots\ssum(\zeta_{t-3}+\zeta_{t-2})\ssum\mu_t\\
&\leq \mu_0\ssum \mu_2\ssum  \dots\ssum\mu_{t-2}\ssum\mu_t=\hilen{\mathcal M}.
\end{align*}
The proof for $t$ odd is similar and left to the reader. 
%The last assertion is well-known (note that ordinal and  shuffle sum are both just   ordinary addition in this case).
\end{proof}

%Given two ordinals $\alpha$ and $\beta$, and  a number $e\geq -1$, let us write $\alpha\aleq e\beta$, if   $\alpha\leq \beta+\gamma$, for some ordinal $\gamma$ of degree at most $e$, that is to say, if the part of   degree $>e$ of $\alpha$ is smaller than or equal to (the   degree $>e$ part of) $\beta$.  Note that $\aleq {-1}$ is just the usual $\leq $. 

\begin{theorem}\label{T:acycl}
Let   $e\geq -1$  and suppose that all homology groups $\op H_i(\mathcal M)$ for $i<t$ have dimension at most $e$, where $\mathcal M$ is the complex~\eqref{eq:complex}.
 If $\hilen{\mathcal M}\aleq e \lowlen{\mathcal M}$,
   then $\op H_t(\mathcal M)$ too has dimension at most $e$.
\end{theorem}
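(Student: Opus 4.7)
The strategy is to mirror the proof of \Lem{L:lowhi}, tracking the homology modules $H_i$ as ``defects of exactness,'' and then truncate at level $e+1$, at which point the defects coming from $i<t$ disappear. For each $i$, set $B_i:=\op{im}(d_{i+1})\sub M_i$ and $Z_i:=\op{ker}(d_i)\sub M_i$, so $H_i=Z_i/B_i$, with the boundary cases $B_{-1}=B_t=0$, $Z_0=M_0$, and $Z_t=H_t$. Writing $m_i,b_i,h_i$ for the respective lengths, \Thm{T:semadd} applied to the short exact sequences $\Exactseq{B_i}{Z_i}{H_i}$ and $\Exactseq{Z_i}{M_i}{B_{i-1}}$ yields, for each $0\leq i\leq t$,
\begin{equation*}
b_{i-1}+h_i+b_i\leq m_i\leq b_{i-1}\ssum h_i\ssum b_i.
\end{equation*}

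Next, apply the truncation $(\cdot)^+_{e+1}$, which one checks directly on Cantor normal forms to be monotone for $\leq$ and additive for both $+$ and $\ssum$. By \Thm{T:dim}, the assumption $\op{dim}H_i\leq e$ for $i<t$ is equivalent to $h_i^+_{e+1}=0$. Abbreviating $b_i^\star:=b_i^+_{e+1}$, $m_i^\star:=m_i^+_{e+1}$, and $h_t^\star:=h_t^+_{e+1}$, the inequalities above become
\begin{align*}
b_{i-1}^\star+b_i^\star\leq m_i^\star&\leq b_{i-1}^\star\ssum b_i^\star\qquad(0\leq i<t),\\
b_{t-1}^\star+h_t^\star\leq m_t^\star&\leq b_{t-1}^\star\ssum h_t^\star,
\end{align*}
with the boundary identity $b_0^\star=m_0^\star$ coming from the $i=0$ line and $b_{-1}^\star=0$. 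This is exactly the setup of \Lem{L:lowhi}, except that at the top the term $h_t^\star$ now plays the role of what would be a zero in the fully exact case. Running the same chaining argument---upper bounds on the $m_i^\star$ for $i\equiv t+1\pmod 2$, iterated application of the ordinal identity $(\alpha\ssum\beta)+(\gamma\ssum\delta)\leq\alpha\ssum(\beta+\gamma)\ssum\delta$ that is already implicit in the proof of \Lem{L:lowhi}, and lower bounds on the interior $m_i^\star$ for $i\equiv t\pmod 2$ with $0<i<t$---produces
\begin{equation*}
\lowlen{\mathcal M}^+_{e+1}\leq\Bigl(\Ssum_{i\equiv t\,(2),\ 0\leq i<t} m_i^\star\Bigr)\ssum b_{t-1}^\star.
\end{equation*}

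Monotonicity of truncation converts the hypothesis $\hilen{\mathcal M}\aleq e\lowlen{\mathcal M}$ into $\hilen{\mathcal M}^+_{e+1}\leq\lowlen{\mathcal M}^+_{e+1}$. Since $\hilen{\mathcal M}^+_{e+1}=\Ssum_{i\equiv t\,(2)}m_i^\star$ and shuffle sum is cancellative (being coefficient-wise on Cantor normal forms), comparison with the previous display yields $m_t^\star\leq b_{t-1}^\star$. Combined with the lower bound $b_{t-1}^\star+h_t^\star\leq m_t^\star$, this forces $b_{t-1}^\star+h_t^\star\leq b_{t-1}^\star$, whence $h_t^\star=0$, and therefore $\op{dim}H_t\leq e$ by \Thm{T:dim}. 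The principal obstacle is the ordinal bookkeeping at level $e+1$, particularly verifying the chain identity $(\alpha\ssum\beta)+(\gamma\ssum\delta)\leq\alpha\ssum(\beta+\gamma)\ssum\delta$ and correctly unwinding the chain in parallel for $t$ even and $t$ odd; in both parities, however, the argument funnels into the single cancellation $m_t^\star\leq b_{t-1}^\star$.
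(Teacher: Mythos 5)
Your proof is correct and follows essentially the same route as the paper's. You compress the paper's two families of short exact sequences $\Exactseq{B_i}{Z_i}{H_i}$ and $\Exactseq{Z_i}{M_i}{B_{i-1}}$ into a single two-sided bound on $m_i$, truncate at level $e+1$, chain exactly as in the proof of \Lem{L:lowhi}, and extract $h_t^{+_{e+1}}=0$; the only cosmetic difference from the paper is that you track the boundary lengths $b_i$ directly and invoke cancellativity of $\ssum$ at the end, whereas the paper tracks the (truncated) cycle lengths $\zeta_i^+$, observes $\zeta_i^+=\beta_i^+$ for $i<t$, and concludes by noting that all inequalities in the chain collapse to equalities. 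These are the same argument.
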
 
\begin{proof}
 The homology $H_i:=\op H_i(\mathcal M)$ of $\mathcal M$, is given, for each $i<t$, by two short exact sequences
\begin{align}
\label{i:hi}&\Exactseq{B_i}{Z_{i}}{H_i}\\
\label{i:bi}&\Exactseq{Z_{i+1}}{M_{i+1}}{B_{i}} 
\end{align} 
with $Z_{0}=M_0$ and $Z_{t}=H_t$. We use the notation~\eqref{eq:splitord} with the value $e+1$, but dropping  the subscript. Let $\mu_i$, $\theta_i$, $\beta_i$, and $\zeta_i$ be the respective lengths of $M_i$, $H_i$, $B_i$, and $Z_i$. By   \Thm{T:dim}, we have $\theta_i^+=0$, for all $i<t$, and we   want to show the same for $i=t$.
   By semi-additivity, \eqref{i:hi} yields
\begin{equation}\label{eq:thetai}
\theta_i+\beta_i\leq \zeta_{i}\leq  \theta_i\ssum \beta_i.
\end{equation} 
Since $\deg\theta_i\leq e$, for all $i<t$, we get $\beta_i^+=\zeta_{i}^+$. 
  Semi-additivity applied to \eqref{i:bi} for $i-1$ gives
$$
\beta_{i-1}+\zeta_{i}\leq \mu_i\leq  \beta_{i-1}\ssum\zeta_{i},
$$
and hence, for $i<t$, we have 
\begin{equation}\label{eq:betai}
\zeta_{i-1}^++\zeta_i^+\leq \mu_i^+\leq \zeta_{i-1}^+\ssum\zeta_i^+,
\end{equation} 
and for $i=t$, using that $Z_{t}=H_t$, we get
\begin{equation}\label{eq:betat}
\zeta_{t-1}^++\theta_t^+\leq \mu_t^+.
\end{equation} 
 For simplicity, let us assume $t$ is even (the odd case is similar). Using respectively the upperbounds in \eqref{eq:betai} for odd $i<t$, ordinal arithmetic,   the lowerbounds in \eqref{eq:betai} for even $i<t$, and then for $i=t$, we get   inequalities
\begin{equation}\label{eq:lohi}
\begin{aligned}
 \lowlen{\mathcal M}^+&=\mu_1^++\mu_3^++\dots+\mu_{t-1}^+\\
 &\leq (\zeta_0^+\ssum\zeta_1^+)+(\zeta_2^+\ssum\zeta_3^+)+\dots+(\zeta_{t-2}^+\ssum\zeta_{t-1}^+)\\
 &\leq \zeta_0^+\ssum(\zeta_1^++\zeta_2^+)\ssum\dots\ssum (\zeta_{t-3}^++\zeta_{t-2}^+)\ssum \zeta_{t-1}^+\\
 &\leq \mu_0^+\ssum \mu_2^+\ssum\dots\ssum\mu_{t-2}^+\ssum \zeta_{t-1}^+\\
 &\leq   \mu_0^+\ssum \mu_2^+\ssum\dots\ssum\mu_{t-2}^+\ssum \mu_t^+=\hilen{\mathcal M}^+
\end{aligned}
\end{equation} 
(note that $\beta_0^+=\zeta_{0}^+=\mu_0^+$).  By assumption, the lowerbound is bigger than or equal to the upperbound, so that we have equalities throughout. In particular, the last of these gives $\zeta_{t-1}^+=\mu_t^+$.  Applied to \eqref{eq:betat}, we then get the desired $\theta_t^+=0$.
%
%In order to prove the second assertion, we keep notation as above, except that we now let  $\mu_i:=\len {M_i}$, $\zeta_i:=\len{Z_i}$, and $\theta_t:=\len{H_t}$. By assumption, $B_i=Z_i$, for all $i<t$, and so  \eqref{eq:betai} holds for these new values, whence so does \eqref{eq:lohi}. From this we  conclude again that $\mu_t=\zeta_{t-1}$, and hence that $\theta_t=0$ by \eqref{eq:betat}.
\end{proof} 

Some special cases of this result are worth mentioning separately:
 
\begin{corollary}\label{C:acycl2}
Let 
$\mathcal S\colon \Exactseq {M_2}{M_1}{M_0}$ be    a complex
     such that $\len {M_0}\ssum\len {M_2}\leq \len {M_1}$. If   $\mathcal S$ is  right exact (respectively, right exact at all but finitely many maximal  ideals), then  $\mathcal S$ is    exact (respectively,   exact at all but finitely many maximal  ideals).\qed
\end{corollary}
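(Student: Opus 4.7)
The plan is to read this off as the three-term case of Theorem~\ref{T:acycl} with two different choices of the parameter $e$. For the complex $\mathcal S$ we have $t=2$, so the definitions in \S\ref{s:mod} give
$$\lowlen{\mathcal S}=\len{M_1}, \qquad \hilen{\mathcal S}=\len{M_0}\ssum\len{M_2}.$$
Since $\alpha^+_e=\alpha$ for both $e=-1$ and $e=0$, the hypothesis $\len{M_0}\ssum\len{M_2}\leq\len{M_1}$ is exactly the condition $\hilen{\mathcal S}\aleq e\lowlen{\mathcal S}$ for each of these two values of $e$.

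For the first (absolute) assertion, I would take $e=-1$. Right exactness of $\mathcal S$ amounts to $\op H_0(\mathcal S)=\op H_1(\mathcal S)=0$, i.e.\ both of these homology modules have dimension $\leq -1$ under the convention that $\op{dim}(0)=-1$. Theorem~\ref{T:acycl} then forces $\op H_2(\mathcal S)$ to have dimension $\leq -1$, that is, to vanish; since $\op H_2(\mathcal S)$ is the kernel of $M_2\to M_1$, this is exactly the exactness of $\mathcal S$.

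For the parenthetical ``generic'' version, I would take $e=0$ and use the standard fact that a finitely generated module $N$ over a Noetherian ring has $\op{dim}(N)\leq 0$ if and only if its support meets only finitely many maximal ideals. The assumption that $\mathcal S$ is right exact at all but finitely many maximal ideals then translates precisely to $\op{dim}\op H_0(\mathcal S)\leq 0$ and $\op{dim}\op H_1(\mathcal S)\leq 0$, and Theorem~\ref{T:acycl} delivers $\op{dim}\op H_2(\mathcal S)\leq 0$, which is the claimed exactness of $\mathcal S$ outside a finite set of maximal ideals. There is no real obstacle to the argument: it is pure bookkeeping, the only points requiring care being the identification of $\lowlen{\mathcal S}$ and $\hilen{\mathcal S}$ for a length-three complex, and the translation between ``right exact almost everywhere'' and a dimensional bound on the low-degree homologies.
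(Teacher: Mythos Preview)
Your proposal is correct and is precisely the argument the paper intends: the corollary carries a \qed\ because it is the case $t=2$ of \Thm{T:acycl}, applied with $e=-1$ for the absolute statement and $e=0$ for the parenthetical one. Two small points of housekeeping: the definitions of $\lowlen{\,\cdot\,}$ and $\hilen{\,\cdot\,}$ sit in the acyclicity section rather than in \S\ref{s:mod}, and your ``standard fact'' should be phrased as ``$\op{dim}(N)\leq 0$ \iff\ $\op{Supp}(N)$ \emph{consists of} finitely many maximal ideals'' (the weaker reading ``$\op{Supp}(N)$ contains only finitely many maximal ideals'' fails over, say, a DVR), which is exactly what ``(right) exact at all but finitely many maximal ideals'' is meant to encode.
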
 

\begin{corollary}\label{C:pdim1}
Let $(R,\maxim)$ be   local ring with residue field $k$ and let $d$ and $\rho$ be its respective dimension and length. Let  $b_i:=\dim_k(\tor RiMk)$ be the   Betti numbers of $M$. If $\lenmod RM=(b_0-b_1)\odot\rho$, then $M$ has projective dimension one, and the converse is true if $M$ is moreover unmixed of dimension $d$. If $M$ is unmixed of dimension $d$ and $\lenmod RM=(b_0-b_1+b_2)\odot\rho$, then    $M$ has projective dimension two.
\end{corollary}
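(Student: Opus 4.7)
The plan is to read off everything from the minimal free resolution $\cdots\to R^{b_2}\to R^{b_1}\to R^{b_0}\to M\to 0$ by applying \Thm{T:semadd} to each short exact sequence $\Exactseq{K_{i+1}}{R^{b_i}}{K_i}$, where $K_0:=M$ and $K_i$ is the $i$-th syzygy for $i\geq 1$. Two facts from the earlier sections feed everything: iterating the split-exact case of \Thm{T:semadd} gives $\len{R^n}=n\odot\rho$, and \Rem{R:Vasc} turns any epimorphism between modules of equal length into an isomorphism. Applied to the surjection $R^{b_i}\onto K_i$ (whose kernel is $K_{i+1}$), semi-additivity also yields the ceiling $\len{K_i}\leq b_i\odot\rho$.

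For the forward direction of the first claim, substitute $\len M=(b_0-b_1)\odot\rho$ into the shuffle upper bound $b_0\odot\rho\leq\len M\ssum\len{K_1}$ coming from the first short exact sequence. Rewriting $b_0\odot\rho$ as $(b_0-b_1)\odot\rho\ssum b_1\odot\rho$ and invoking the coefficient-wise additivity of shuffle sum in Cantor normal form forces $\len{K_1}\geq b_1\odot\rho$. Combined with the ceiling $\len{K_1}\leq b_1\odot\rho$, this yields $\len{K_1}=\len{R^{b_1}}$, so by \Rem{R:Vasc} the surjection $R^{b_1}\onto K_1$ is an isomorphism; hence $K_1\iso R^{b_1}$ and $\op{pd}M\leq 1$.

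For the converse, assume $\op{pd}M=1$ and $M$ is unmixed of dimension $d$, so $\len M=a\omega^d$ for some $a$ by \Thm{T:unm}. Applying \Thm{T:semadd} to $\Exactseq{R^{b_1}}{R^{b_0}}{M}$ and comparing the bounds $a\omega^d+b_1\odot\rho\leq b_0\odot\rho\leq a\omega^d\ssum b_1\odot\rho$ coefficient-by-coefficient in Cantor normal form pins $a$ down and shows that $\len M=(b_0-b_1)\odot\rho$. The projective-dimension-two statement is proved by propagating the same argument one step further: the hypothesis $\len M=(b_0-b_1+b_2)\odot\rho$ together with $M$ unmixed of dimension $d$ and repeated use of \Thm{T:semadd} on the two short exact sequences $\Exactseq{K_1}{R^{b_0}}{M}$ and $\Exactseq{K_2}{R^{b_1}}{K_1}$ forces $\len{K_2}=b_2\odot\rho=\len{R^{b_2}}$, so again by \Rem{R:Vasc} the surjection $R^{b_2}\onto K_2$ is an isomorphism and $K_3=0$, giving $\op{pd}M\leq 2$. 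The main obstacle is the careful bookkeeping of shuffle sum versus ordinary ordinal sum in \Thm{T:semadd}: each individual inequality only determines a partial piece of $\len{K_i}$, and one has to exploit them simultaneously, using cancellation of shuffle sum together with the monomial form of $\len M$ coming from unmixedness.
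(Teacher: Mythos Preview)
Your argument is correct, but it follows a different route from the paper's. The paper does not break the resolution into short exact sequences; instead it applies the acyclicity criterion directly to the truncated complexes $\mathcal S_1\colon R^{b_1}\to R^{b_0}\to M\to 0$ and $\mathcal S_2\colon R^{b_2}\to R^{b_1}\to R^{b_0}\to M\to 0$. For the projective-dimension-one direction, the hypothesis $\lenmod RM=(b_0-b_1)\odot\rho$ makes $\hilen{\mathcal S_1}=\lenmod RM\ssum(b_1\odot\rho)=b_0\odot\rho=\lowlen{\mathcal S_1}$, so \Cor{C:acycl2} forces $\mathcal S_1$ to be left exact. For projective dimension two, unmixedness turns $\lowlen{\mathcal S_2}=\mu+(b_1\odot\rho)$ into the shuffle sum $\mu\ssum(b_1\odot\rho)=(b_0+b_2)\odot\rho=\hilen{\mathcal S_2}$, and then \Thm{T:acycl} (with $e=-1$) gives left exactness. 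Your approach avoids \Thm{T:acycl} altogether: you use only semi-additivity, shuffle-sum cancellation (an easy but unstated consequence of the Cantor-normal-form definition), \Cor{C:unm} for the equality $b_0\odot\rho=\mu\ssum\len{K_1}$, and \Rem{R:Vasc} to upgrade a length equality to an isomorphism $R^{b_i}\iso K_i$. This is more elementary and self-contained, whereas the paper's proof is terser and showcases the corollary as an instance of the general acyclicity machinery.
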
 
\begin{proof}
By assumption, there exists an exact sequence $\mathcal S_1\colon R^{b_1}\to R^{b_0}\to M\to 0$ (respectively,   $\mathcal S_2\colon R^{b_2}\to R^{b_1}\to R^{b_0}\to M\to 0$).
  If   $\mu=\len M$, then $\lowlen{\mathcal S_1}=b_0\odot\rho$ and $\hilen{\mathcal S_1}=\mu\ssum (b_1\odot\rho)$, proving that $M$ has projective dimension one, since $\mathcal S_1$ is then left exact by Corollary~\ref{C:acycl2}. 
 
Suppose next that  $M$ is   unmixed of dimension $d$, so that $\mu=q\omega^d$ by \Thm{T:unm}. Hence if $\mathcal S_1$ is also exact  on the left, then $q\omega^d+(b_1\odot\rho)\leq b_0\odot\rho \leq q\omega^d\ssum (b_1\odot\rho)$ by \Thm{T:semadd}, and both bounds are equal by ordinal arithmetic.

To prove the second case, assume $q\omega^d=\mu=(b_0+b_2-b_1)\odot\rho$, from which it follows that $\mu+(b_1\odot\rho)=(b_0+b_2)\odot\rho$. Since the latter two ordinals are respectively $\lowlen{\mathcal S_2}$ and $\hilen{\mathcal S_2}$, the sequence  $\mathcal S_2$ is also exact on the left by \Thm{T:acycl}.  
\end{proof} 

%\begin{remark}\label{R:pdim1}
%The converse of \Cor{C:pdim1} also holds, provided $M$ is unmixed of dimension $d$, meaning that all its associated primes are $d$-dimensional. However, to prove this, we need another property of length that will not  be discussed in this paper; see instead \cite{SchOrdLen}. It implies that if $M$ is unmixed of dimension $d$, then $\len M$ is of the form $a\omega^d$, Using this additional knowledge, we can also prove that  if $M$ is unmixed of dimension $d$ and $\lenmod RM=(b_0-b_1+b_2)\odot\len R$, then    $M$ has projective dimension two.  It also  implies:
% \end{remark}

\begin{corollary}\label{C:period}
Let $M$ and $N$ be  modules of the same dimension with $N$ moreover unmixed. If  the complex 
 $\mathcal S\colon0\to N\to M\to M\to N\to 0$
is exact at all spots except possibly at the left most one, then it is in fact exact.
\end{corollary}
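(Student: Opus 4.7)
The plan is to apply the acyclicity criterion \Thm{T:acycl} to the complex $\mathcal S$ with the extreme value $e=-1$. With that choice the conclusion ``$H_t(\mathcal S)$ has dimension at most $-1$'' simply asserts that $H_t(\mathcal S)=0$; indexing $\mathcal S$ as $M_3=N$, $M_2=M$, $M_1=M$, $M_0=N$ (so $t=3$), the leftmost spot is $M_3$, and its exactness is exactly what we need to deduce.

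Next I would verify the two hypotheses of \Thm{T:acycl}. The vanishing of $H_i(\mathcal S)$ for $i<3$ is exactly what the statement of the corollary grants, so only the length comparison $\hilen{\mathcal S}\aleq{-1}\lowlen{\mathcal S}$ remains. Writing $\mu:=\lenmod RM$ and $\nu:=\lenmod RN$, the definitions from \Lem{L:lowhi} unfold to $\lowlen{\mathcal S}=\nu+\mu$ (the ordinary sum over the even indices $i=0,2$, in ascending order) and $\hilen{\mathcal S}=\mu\ssum\nu$ (the shuffle sum over the odd indices $i=1,3$); at level $e=-1$ the required comparison reduces to the plain ordinal inequality $\hilen{\mathcal S}\leq\lowlen{\mathcal S}$.

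The heart of the matter is that under our hypotheses these two ordinals in fact coincide. Indeed, since $N$ is unmixed of dimension $d$, \Thm{T:unm} gives $\nu=a\omega^d$ for some positive integer $a$, while \Thm{T:dim} lets me write $\mu=b\omega^d+\mu^-$ with $b\geq 1$ and $\deg\mu^-<d$. A short Cantor-normal-form computation then yields
\begin{equation*}
\nu+\mu=a\omega^d+b\omega^d+\mu^-=(a+b)\omega^d+\mu^-=\mu\ssum\nu,
\end{equation*}
so that $\lowlen{\mathcal S}=\hilen{\mathcal S}$ and the inequality holds (trivially, as an equality). Invoking \Thm{T:acycl} then delivers $H_3(\mathcal S)=0$, which is the missing exactness at the leftmost spot. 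The only delicate point is precisely this cancellation: it is the unmixedness of $N$ that rules out any sub-leading terms in $\nu$ which ordinary ordinal addition would otherwise absorb, thereby breaking the agreement between $\nu+\mu$ and $\mu\ssum\nu$.
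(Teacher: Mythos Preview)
Your proof is correct and follows essentially the same route as the paper: compute $\lowlen{\mathcal S}=\nu+\mu$ and $\hilen{\mathcal S}=\mu\ssum\nu$, use the unmixedness of $N$ (via \Thm{T:unm}) together with $\dim M=\dim N$ to see that these two ordinals coincide, and then invoke \Thm{T:acycl} with $e=-1$. The paper compresses your Cantor-normal-form computation into the single line $\nu+\mu=\nu\ssum\mu$, but the content is identical.
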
 
\begin{proof}
Let $\mu$ and $\nu$ be the respective lengths of $M$ and $N$. By \Thm{T:unm}, we get   $\lowlen{\mathcal S}=\nu+\mu=\nu\ssum\mu=\hilen{\mathcal S}$. The result now follows from \Thm{T:acycl}.
\end{proof} 

Given a complex \eqref{eq:complex}, let us define its \emph{generic Euler \ch} $\genchi M$ as the alternating sum $\sum_i (-1)^i\genlen{}{M_i}$.

\begin{corollary}\label{C:acycunm}
If $\mathcal M\colon    M_t\to M_{t-1}\to\dots\to M_1\to M_0\to 0$   is an exact sequence in which all modules are unmixed of dimension $d$, then the generic length of $\op H_t(\mathcal M)$ is equal to $\genchi M$. In particular, $M_t\to M_{t-1}$ is injective \iff\ $\genchi M=0$.
\end{corollary}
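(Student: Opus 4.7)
My plan is to reduce to a classical Euler characteristic computation by breaking $\mathcal M$ into short exact sequences and invoking \Cor{C:unm}. Setting $Z_i:=\op{ker}(d_i\colon M_i\to M_{i-1})$ with the convention $Z_0=M_0$, so that $Z_t=\op H_t(\mathcal M)$, the exactness hypothesis yields short exact sequences
\begin{equation*}
0\to Z_i\to M_i\to Z_{i-1}\to 0
\end{equation*}
for $1\leq i\leq t$. Since every $Z_i$ embeds in the unmixed $d$-dimensional module $M_i$, its associated primes are associated primes of $M_i$, all of dimension $d$; thus each $Z_i$ is either zero or itself unmixed of dimension $d$.

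By \Thm{T:unm}, I can then write $\lenmod R{M_i}=a_i\omega^d$ and $\lenmod R{Z_i}=z_i\omega^d$ with $a_i:=\genlen{}{M_i}$ and $z_i:=\genlen{}{Z_i}$. Applying \Cor{C:unm} to each short exact sequence---the quotient $Z_{i-1}$ being unmixed of the same dimension as $M_i$, or zero---gives
\begin{equation*}
a_i\omega^d=z_{i-1}\omega^d\ssum z_i\omega^d=(z_{i-1}+z_i)\omega^d,
\end{equation*}
i.e.\ the integer recurrence $a_i=z_{i-1}+z_i$ with seed $z_0=a_0$. Solving this inductively produces $z_t=\sum_{i=0}^{t}(-1)^{t-i}a_i$, which up to overall sign is the generic Euler characteristic $\genchi M$; this establishes the first assertion.

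For the ``in particular'' statement, injectivity of $M_t\to M_{t-1}$ is equivalent to $\op H_t(\mathcal M)=0$, and the forward direction is immediate. Conversely, if $\genchi M=0$, then $\genlen{}{\op H_t(\mathcal M)}=0$, so $(\op H_t(\mathcal M))_\pr=0$ at every $d$-dimensional minimal prime $\pr$ of $R$. Since $\op H_t(\mathcal M)$ is a submodule of the unmixed $d$-dimensional module $M_t$, it is either zero or itself unmixed of dimension $d$; in the latter case it would have a nonzero localization at some such $\pr$, so we conclude $\op H_t(\mathcal M)=0$.

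The substantive content all lies in \Cor{C:unm}; the main bookkeeping challenges are tracking signs in the alternating recurrence and handling the degenerate cases in which some intermediate $Z_i$ vanishes, which is why the ``or zero'' qualifiers are inserted above.
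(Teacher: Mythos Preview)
Your argument is correct and is in fact more direct than the paper's. The paper derives this corollary from the general ordinal acyclicity criterion \Thm{T:acycl}: since all lengths are monomials $a_i\omega^d$, ordinal sum and shuffle sum coincide, so $\genchi M=0$ is equivalent to $\lowlen{\mathcal M}=\hilen{\mathcal M}$, and then \Thm{T:acycl} (with $e=-1$) forces $\op H_t(\mathcal M)=0$; the generic length formula is then recovered by augmenting the complex with $\op H_t(\mathcal M)$ on the left. You instead bypass \Thm{T:acycl} entirely: unmixedness lets you invoke \Cor{C:unm}, which upgrades the semi-additivity \emph{inequalities} to \emph{equalities} on each short exact piece, so the problem collapses to the classical integer telescoping $a_i=z_{i-1}+z_i$. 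This is a cleaner route here and makes the role of unmixedness transparent, whereas the paper's proof exhibits the result as an instance of the general machinery.

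Two minor remarks. First, your telescoping yields $z_t=(-1)^t\genchi M$ rather than $\genchi M$; you flag this (``up to overall sign''), and the same sign ambiguity is implicit in the paper's statement and proof. Second, in your last paragraph the relevant $d$-dimensional primes are the associated primes of $M_t$ (equivalently of $\op H_t(\mathcal M)$), not minimal primes of $R$; the logic is unaffected, since a nonzero unmixed module of dimension $d$ has an associated prime of dimension $d$ at which its localization is nonzero.
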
 
\begin{proof}
Since $\op H_t(\mathcal M)$ is the kernel of $M_t\to M_{t-1}$, it suffices to show the last assertion. By \Thm{T:unm},   ordinal sum and shuffle sum are the same here, so that $\genchi M=0$ \iff\ $\lowlen {\mathcal M}=\hilen{\mathcal M}$, and the result now follows from \Thm{T:acycl}.
\end{proof}

Recall that when $(R,\maxim)$ is local, any finitely generated $R$-module $M$ has a uniquely defined \emph{syzygy}, denoted $\Omega M$, given by a   \emph{minimal exact sequence}  $\Exactseq{\Omega M}{R^n}M$, that is to say,  such that $\Omega M\sub\maxim R^n$.

\begin{theorem}\label{T:syz}
Let   $\Exactseq{\Omega M}XM$ be an exact sequence. If $M$ is unmixed of maximal dimension, then $X$ is free \iff\ $\Omega M\sub\maxim X$.
\end{theorem}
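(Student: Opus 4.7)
The plan is to compare $X$ with the minimal free cover $R^n$ of $M$ via a length computation, and then close each direction by combining Nakayama's lemma with \Rem{R:Vasc}.

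Observe first that both short exact sequences $\Exactseq{\Omega M}{X}{M}$ and $\Exactseq{\Omega M}{R^n}{M}$ share the same kernel $\Omega M$ and quotient $M$. Since $\Omega M$ is a submodule of $R^n$, it has dimension at most $d=\dim R$, so $\dim X=\max(\dim\Omega M,\dim M)=d$. As $M$ is unmixed of dimension $d$, \Cor{C:unm} applies to both sequences and yields
$$
\lenmod R X \;=\; \lenmod R M \ssum \lenmod R{\Omega M} \;=\; \lenmod R{R^n} \;=\; n\rho,
$$
where $\rho=\lenmod R R$.

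For the implication $(\Leftarrow)$, assume $\Omega M\sub\maxim X$. Using projectivity of $R^n$, lift the minimal surjection $R^n\onto M$ through $X\onto M$ to obtain a map $\psi\colon R^n\to X$. Its image together with $\Omega M$ generates $X$ (since the image already surjects onto $M=X/\Omega M$), so the hypothesis $\Omega M\sub\maxim X$ and Nakayama's lemma force $\psi$ to be surjective. As $\lenmod R{R^n}=\lenmod R X$, \Rem{R:Vasc} upgrades $\psi$ to an isomorphism, proving that $X$ is free.

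For the implication $(\Rightarrow)$, write $X=R^m$ and compare lengths: $m\rho=n\rho$. Since $R$ has dimension $d$, the $\omega^d$-coefficient of $\rho$ is strictly positive by \Thm{T:dim}, so reading off the Cantor normal form forces $m=n$. Tensoring $\Exactseq{\Omega M}{X}{M}$ with $k=R/\maxim$ produces a right-exact sequence ending in a surjection $k^n\onto M/\maxim M$; since $\dim_k M/\maxim M=n$ (the rank of the minimal free cover), this surjection is an isomorphism of $n$-dimensional $k$-vector spaces, and hence the image of $\Omega M$ in $X/\maxim X=k^n$ is trivial, yielding $\Omega M\sub\maxim X$.

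The pivot of the whole argument is the opening length computation, which crucially depends on the unmixedness and maximal-dimension assumptions on $M$ in order to activate \Cor{C:unm}; once the equality $\lenmod R X=\lenmod R{R^n}$ is in hand, both directions are dispatched by essentially standard Nakayama-and-Vasconcelos reasoning.
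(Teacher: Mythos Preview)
Your argument is correct and mirrors the paper's: both use \Cor{C:unm} on the two short exact sequences to obtain $\lenmod RX=\lenmod R{R^n}$, and then for $(\Leftarrow)$ produce a surjection $R^n\onto X$ via Nakayama and upgrade it to an isomorphism via \Rem{R:Vasc}. For $(\Rightarrow)$ the paper simply invokes uniqueness of the minimal free presentation (no ordinal lengths needed), whereas you give a more self-contained argument reusing the length equality to force $m=n$; one notational caveat: $\lenmod R{R^n}$ is the $n$-fold \emph{shuffle} sum of $\rho$ (what the paper writes as $n\odot\rho$), not the ordinal product $n\rho$, and these differ in general---but your reasoning only reads off the $\omega^d$-coefficient, where the two agree.
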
 
\begin{proof}
If $X$ is free, then the result follows from the minimality and uniqueness of   syzygies. Let $N:=\Omega M$, and let $\mu$, $\nu$, and $\chi$ be the respective lengths of $M$, $N$, and $X$.   By applying \Cor{C:unm} to the given exact sequence and to the minimal exact sequence $\Exactseq N{R^n}M$ respectively, we get  $\chi=\mu+\nu=\len{R^n}$. Since $N\sub \maxim X$, it follows from Nakayama's lemma, that $X$ and $M$ have the same minimal number of generators, which by minimality is precisely $n$. Hence there exists a surjective morphism $R^n\onto X$. As both have the same length, this must be an isomorphism by Remark~\ref{R:Vasc}.
\end{proof}

\begin{theorem}\label{T:pullback}
   Let $\varphi\colon \tilde M\to M$ be a surjective morphism, $N\sub M$ a submodule, and $\tilde N=\inverse\varphi N$ its pull-back inside $\tilde M$. Let $\psi\colon\tilde N\to \tilde M$ be an arbitrary morphism with cokernel $C$. If  $M$ is  unmixed of maximal dimension, and there exists an exact sequence $\Exactseq CMN$, then $\psi$ is injective.
\end{theorem}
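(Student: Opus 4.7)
The plan is to reduce to \Cor{C:acycl2} applied to the right-exact complex
\[
\mathcal S\colon 0\to \tilde N\xrightarrow{\psi}\tilde M\to C\to 0,
\]
where the second arrow is the canonical surjection onto $C=\op{coker}\psi$ (right-exactness at $\tilde M$ and at $C$ is built into the definition of cokernel). \Cor{C:acycl2} will then force $\mathcal S$ to be exact, whence $\psi$ injective, provided we establish the length inequality $\len C\ssum\len{\tilde N}\leq \len{\tilde M}$.

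The three length identities needed will come from \Cor{C:unm}. Since $M$ is unmixed of maximal dimension $d=\op{dim}R$, every non-zero submodule of $M$ is itself unmixed of dimension $d$; this applies both to $N\sub M$ and, via the hypothesized short exact sequence $\Exactseq CMN$, to $C$ (which injects into $M$). Moreover, the surjection $\varphi\colon\tilde M\onto M$ gives $\op{dim}\tilde M=d$, and $\op{dim}\tilde N=d$ whenever $N\neq 0$. With these dimensional prerequisites in place, three applications of \Cor{C:unm}---first to $\Exactseq CMN$, then to $\Exactseq K{\tilde M}M$ (with $K=\op{ker}\varphi$), and finally to $\Exactseq K{\tilde N}N$ (coming from $\tilde N=\inverse\varphi N$)---yield respectively
\[
\len M=\len C\ssum\len N,\qquad \len{\tilde M}=\len K\ssum\len M,\qquad \len{\tilde N}=\len K\ssum\len N.
\]
Substituting and using the commutativity of $\ssum$ then gives $\len{\tilde M}=\len K\ssum\len C\ssum\len N=\len C\ssum\len{\tilde N}$, which is in fact an equality, so the inequality required by \Cor{C:acycl2} holds and $\psi$ is injective.

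The main subtlety is verifying the dimensional hypothesis of \Cor{C:unm} in each application; this can degenerate when one of $N$, $C$, or $K$ vanishes, but in all such cases the corresponding identity remains valid either trivially or as an immediate consequence of \Thm{T:semadd}, so the argument above goes through unchanged.
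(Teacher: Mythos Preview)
Your proof is correct and follows essentially the same approach as the paper: both arguments rest on the two applications of \Cor{C:unm} to the sequences $\Exactseq K{\tilde M}M$ and $\Exactseq K{\tilde N}N$, together with unmixedness of $N$, and then invoke the acyclicity criterion. The only cosmetic difference is that the paper applies \Thm{T:acycl} to the four-term complex $\tilde N\to\tilde M\to M\to N\to 0$ (with the middle map being $\tilde M\onto C\hookrightarrow M$), whereas you collapse this to the three-term complex $\tilde N\to\tilde M\to C\to 0$ and use \Cor{C:acycl2} directly, supplementing with one extra application of \Cor{C:unm} to $\Exactseq CMN$; the resulting length computations are equivalent.
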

\begin{proof}
By assumption, we have an exact sequence
 $$
 \mathcal S\colon \tilde N\map{\psi} \tilde M\to M\to N\to 0.
 $$
 In particular,  $\lowlen {\mathcal S}=\len N+\len{\tilde M} $ and $\hilen{\mathcal S}=\len M\ssum\len{\tilde N}$.  Let $K$ be the kernel of $\varphi$, which is then also the kernel of the restriction of $\varphi$ to $\tilde N$. In other words, we have exact sequences $\Exactseq K{\tilde M}M$ and $\Exactseq K{\tilde N}N$. Since $M$ is unmixed, so is $N$, and hence   \Cor{C:unm}   yields $\len{\tilde M}=\len M\ssum\len K$ and $\len{\tilde N}=\len N\ssum\len K$. Moreover, by unmixedness  $\lowlen{\mathcal S}$ is equal to $\len N\ssum\len{\tilde M} $ whence equal to  $\hilen{\mathcal S}$. Therefore,  $\psi$ is injective by \Thm{T:acycl}.
\end{proof}

% 
% \begin{corollary}\label{C:period}
%Let 
%\end{corollary} 
%\begin{proof}
%Let $\mu$ and $\nu$ be the respective lengths of $M$ and $N$. By   \Lem{L:genfr}, we have $\lowlen{\mathcal S}=\nu+\mu=\nu\ssum\mu=\hilen{\mathcal S}$, and so the result follows from \Thm{T:acycl}.
%\end{proof} 

\section{Locally isomorphic modules}
Given two modules $M$ and $N$, define $\mathfrak k(M,N)$ as the sum of all $\ann R{\op{ker}(f)}$, for $f\in\hom RMN$, and similarly, define    $\mathfrak c(M,N)$ as the sum of  all $\ann R{\op{coker}(f)}$. In particular, if $N$ is a homomorphic image of $M$, then $\mathfrak c(M,N)=1$, and if $M$ is (isomorphic to) a submodule of $N$, then $\mathfrak k(M,N)=0$.

%\begin{corollary}\label{C:cancunit}
%Given finitely generated modules $M$, $N$, and $H$, if $M\oplus H\iso N\oplus H$ and $\mathfrak c(M,N)+\mathfrak c(N,M)=1$, then $M$ and $N$ are locally isomorphic.
%\end{corollary}
%\begin{proof}
%Let $\pr$ be any prime ideal. Since $\mathfrak c(M,N)+\mathfrak c(N,M)$ does not contain $\pr$, we may  choose, upon possibly switching the role of $M$ and $N$, a morphism $M\to N$ with cokernel $Q$ such that $Q_\pr=0$. Let $K$ be its kernel, so that we have an exact sequence 
%\begin{equation}\label{eq:locpr}
%\Exactseq{K_\pr}{M_\pr}{N_\pr}.
%\end{equation}
% After localizing at $\pr$, \Cor{C:cancel} yields $\len{M_\pr}=\len{N_\pr}$.  It then follows from \Rem{R:Vasc} that $K_\pr=0$. 
%\end{proof}

\begin{lemma}\label{L:cancunit}
 $M$ and $N$ are locally isomorphic \iff\ $\mathfrak c(M,N) \mathfrak c(N,M)=1$.
\end{lemma}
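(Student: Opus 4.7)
The plan is to observe that for ideals $I, J \subseteq R$, the product $IJ$ equals the unit ideal if and only if $I = J = R$ (since $IJ \subseteq I \cap J$). Thus the condition $\mathfrak k(M,N)\mathfrak k(N,M) = 1$ is really two separate conditions $\mathfrak c(M,N) = R$ and $\mathfrak c(N,M) = R$, and I would prove the two implications by localizing at an arbitrary prime $\pr$ and using an ideal-membership criterion: an ideal is the unit ideal iff it is not contained in any prime.

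For the $(\Leftarrow)$ direction, fix any prime $\pr$. Since $\mathfrak c(M,N)$ is a sum of annihilators of cokernels and equals $R$, at least one summand $\ann R{\op{coker}(f)}$ is not contained in $\pr$, which means there exists $f\colon M\to N$ with $\op{coker}(f)_\pr = 0$. In other words $f_\pr\colon M_\pr\to N_\pr$ is surjective. By the same reasoning on $\mathfrak c(N,M)$ there exists $g\colon N\to M$ with $g_\pr$ surjective. By semi-additivity (\Thm{T:semadd}) these surjections give $\lenmod{R_\pr}{N_\pr}\leq\lenmod{R_\pr}{M_\pr}\leq\lenmod{R_\pr}{N_\pr}$, so they are equal, and then \Rem{R:Vasc} promotes $f_\pr$ to an isomorphism $M_\pr\iso N_\pr$.

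For the $(\Rightarrow)$ direction, suppose $M_\pr\iso N_\pr$ for every prime $\pr$. Since $R$ is Noetherian and $M$ is finitely generated (hence finitely presented), the natural map $\hom RMN_\pr\to\hom{R_\pr}{M_\pr}{N_\pr}$ is an isomorphism. Thus any local isomorphism $\phi\colon M_\pr\to N_\pr$ can be written as $f/s$ with $f\in\hom RMN$ and $s\notin\pr$, and then $f_\pr=s\phi$ is still an isomorphism of $R_\pr$-modules. In particular $\op{coker}(f)_\pr=0$, so $\ann R{\op{coker}(f)}\not\sub\pr$, and a fortiori $\mathfrak c(M,N)\not\sub\pr$. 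Since $\pr$ was arbitrary, $\mathfrak c(M,N)=R$; by symmetry $\mathfrak c(N,M)=R$, whence the product is $1$.

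The only non-routine point is the $(\Rightarrow)$ direction, where one must invoke the standard identification $\hom RMN_\pr=\hom{R_\pr}{M_\pr}{N_\pr}$ to lift a local isomorphism to a global homomorphism whose localization at $\pr$ stays surjective. Everything else is a direct application of semi-additivity together with \Rem{R:Vasc}, so I do not expect any serious obstacle.
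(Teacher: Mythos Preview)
Your proof is correct and follows essentially the same route as the paper. For $(\Leftarrow)$ the paper composes the two local surjections and applies \Cor{C:Vasc} to the resulting surjective endomorphism, whereas you compare lengths and invoke \Rem{R:Vasc}; these are equivalent uses of the same idea. For $(\Rightarrow)$ the paper simply declares the converse ``immediate,'' while you spell out the lifting via $\hom RMN_\pr\iso\hom{R_\pr}{M_\pr}{N_\pr}$ (valid since $M$ is finitely presented over the Noetherian ring $R$), which is exactly the content that makes it immediate. (Minor typo: in your first paragraph you wrote $\mathfrak k(M,N)\mathfrak k(N,M)=1$ where you meant $\mathfrak c(M,N)\mathfrak c(N,M)=1$.)
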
 
\begin{proof}
For each prime ideal $\pr$, we can find surjective morphisms $M_\pr\to N_\pr$ and $N_\pr\to M_\pr$. The composition must then be an isomorphism by \Cor{C:Vasc}. The converse is also immediate.
\end{proof}

Given an $R$-module $M$, let $\assid RM$ be the sum of all its associated primes.  

 \begin{lemma}\label{L:submod}
%Let $M$ and $N$ be   $R$-modules over a finite-dimensional Noetherian ring $R$. 
%\begin{enumerate}
%\item\label{i:c} 
If $\mathfrak c(M,N)$ is not contained in $\assid RN$, then $\lenmod RN\leq \lenmod RM$.
%\item\label{i:k} 
If $R$, moreover, is a complete local ring and $\mathfrak k(M,N)$ is not contained in $\assid RM$, then $\lenmod RM\leq \lenmod RN$.
%\end{enumerate}
\end{lemma}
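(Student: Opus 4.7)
The plan is, in each part, to use the non-containment hypothesis to extract a single ring element $x$ that is regular on the relevant module and annihilates either the cokernel or the kernel of some morphism $M\to N$; then to feed this into semi-additivity (\Thm{T:semadd}).

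For the first part, I would observe that $\assid RN$ is an ideal containing every $\pr\in\op{Ass}(N)$ as a summand, so the hypothesis $\mathfrak c(M,N)\not\sub\assid RN$ descends first to a single summand $\ann R{\op{coker}(f)}\not\sub\assid RN$, and then to an element $x\in\ann R{\op{coker}(f)}$ with $x\notin\assid RN$. Such an $x$ lies outside every associated prime of $N$, hence is $N$-regular. The annihilation condition gives $xN\sub f(M)$, and $N$-regularity turns multiplication by $x$ into an isomorphism $N\iso xN$. Two applications of \Thm{T:semadd}, one to $\Exactseq{\op{ker}(f)}{M}{f(M)}$ and one to the embedding $xN\into f(M)$, then chain up to $\lenmod RN\leq\lenmod R{f(M)}\leq\lenmod RM$.

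For the second part I would run the identical extraction to obtain $f\in\hom RMN$ and $x\in\ann R{\op{ker}(f)}$ with $x$ outside every associated prime of $M$, so $M$-regular. But then $x$ simultaneously annihilates and acts injectively on $\op{ker}(f)\sub M$, forcing $\op{ker}(f)=0$, so $f$ embeds $M$ into $N$, and \Thm{T:semadd} applied to $\Exactseq{M}{N}{\op{coker}(f)}$ delivers $\lenmod RM\leq\lenmod RN$.

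The only substantive step is the extraction argument: non-containment in a \emph{sum} of primes automatically refines to avoidance of every individual prime, which is what gives the regularity of $x$. As my argument uses neither completeness nor locality, I suspect the completeness hypothesis in the second part is placed there only so that the author may alternatively carry out a Matlis-dual proof, identifying $\hom RMN$ with $\hom R{N^\dagger}{M^\dagger}$, interchanging $\op{ker}$ and $\op{coker}$ while preserving annihilators and associated primes, and thereby reducing the second statement to the first.
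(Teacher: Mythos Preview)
Your proof is correct. For the first assertion, your argument is essentially identical to the paper's: extract a single $f$ and an $N$-regular $x\in\ann R{\op{coker}(f)}$, then use $N\iso xN\sub f(M)$ and semi-additivity.

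For the second assertion, your argument is genuinely different and in fact more elementary. The paper carries out precisely the Matlis-duality argument you anticipate: it dualizes $\Exactseq{K}{M}{g(M)}$ to obtain $\matlis N\to\matlis M\to\matlis K\to 0$, observes that $x\matlis K=0$ forces $x\matlis M\sub\matlis g(\matlis N)$, and then uses surjectivity of multiplication by $x$ on $\matlis M$ (dual to $M$-regularity) together with double-dualization to conclude. This is where the completeness and locality hypotheses enter. Your observation that an $M$-regular element annihilating the submodule $K\sub M$ forces $K=0$ outright short-circuits all of this and shows, as you suspected, that the completeness hypothesis in the second part is superfluous. What the paper's route buys is a structural explanation of the symmetry between the two statements via duality; what yours buys is a strictly stronger result with a one-line proof.
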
 
\begin{proof}
%We start with proving \eqref{i:c}.
Write $\mathfrak c(M,N)$ as a finite sum $\sum_i \ann R{\op{coker}(f_i)}$, for some   $f_i\colon M\to N$. By assumption this is not contained in the sum of all associated primes of $N$. Therefore, there must be some $i$,  such that $\ann R{\op{coker}(f_i)}$ contains an $N$-regular element $x$. Let $f:=f_i$. Since then  $xN\sub f(M)$, and since $x$ is $N$-regular, the morphism $N\to f(M)$ sending $a\in N$ to $xa$ is an injection. In particular, $\len N\leq \len{f(M)}$ by semi-additivity. Since $f(M)$ is a  homomorphic image of $M$, its length is at most $\len M$.

To prove the last assertion, under the additional assumption that $R$ is complete and local, with residue field $k$, we will use Matlis duality, where we write $\matlis M:=\hom RME$ for the Matlis dual  of a module $M$, with $E$ the injective hull of $k$.  By the same argument, there exists $g\colon M\to N$ and an $M$-regular element $x$ such that $xK=0$, where $K$ is the kernel of $g$. Taking Matlis duals, we get an exact sequence
$$
\matlis N\map{\matlis g}\matlis M\to \matlis K\to 0.
$$
Since $x\matlis K=0$, we get $x\matlis M\sub W:=\matlis g(\matlis N)$. On the other hand, since $0\to M\map x M$ is injective, the dual map $\matlis M\map x\matlis M\to 0$ is surjective, so that $x\matlis M=\matlis M$. Taking again duals then yields an epimorphism  $\matlis W\onto M$, so that $\len M\leq \len{\matlis W}$. Since we also have an epimorphism $\matlis N\onto W$, we get an embedding $\matlis W\sub N$, so that $\len{\matlis W}\leq\len N$.
%
%Let $H\sub M$ be the submodule annihilated by some power of $x$. Since $M$ is Noetherian, there is some power $n$ such that $s^nH=0$. Let $p\colon M\to M/H$ be the canonical projection and consider the morphism $\tilde g\colon M/H\to N$ sending an element $b\in M/H$ to   $\tilde g(b):=s^ng(a)$, where $b=p(a)$ for some $a\in M$. Since $s^nH=0$, this does not depend on the choice of $a$, and hence $\tilde g$ is well-defined. Moreover, suppose $b=p(a)$ lies in the kernel of $\tilde g$, so that $s^ng(a)=0$. Therefore,  $s^na\in\op{ker}(g)$, whence $s(s^na)=0$, and hence $a\in H$, that is to say $b=0$. Hence $\len {M/H}\leq \len N$  
%
%Write $\mathfrak c(M,N)$ as a finite sum $\sum_i \ann R{\op{coker}(\varphi_i)}$, for some morphism{s} $\varphi_i\in\hom RMN$. By assumption this is not contained in the sum of all associated primes of $N$. Therefore, there must be some $i$, say $i=1$, such that $\ann R{\op{coker}(\varphi_1)}$ contains an $N$-regular element $x$. Since then  $xN\sub \varphi_1(M)$, and since $x$ is $N$-regular, the morphism $N\to \varphi_1(M)$ sending $a\in N$ to $xa$ is an injection. In particular, $\len N\leq \len{\varphi_1(M)}$ by semi-additivity. Since $\varphi_1(M)$ is a  homomorphic image of $M$, its length is at most $\len M$.
\end{proof} 

\begin{theorem}\label{T:lociso}
Let  $M$ and $N$ be finitely generated $R$-modules. If  
\begin{enumerate}
\item\label{i:MN}  $\mathfrak c(M,N)$ is not contained in $\assid RN$;
\item\label{i:NM} $\mathfrak c(N,M)$ is not contained in $\assid RM$;
\item\label{i:1} $\mathfrak c(M,N)+\mathfrak c(N,M)=1$,
\end{enumerate}
 then $M$ and $N$ are locally isomorphic. 
\end{theorem}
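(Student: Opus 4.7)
The plan is to show directly that $M_\pr\cong N_\pr$ at every prime $\pr$, which is what ``locally isomorphic'' means here (cf.\ the proof of \Lem{L:cancunit}). The strategy splits in two: first I extract from (a) and (b) the stalkwise length equality $\lenmod R{M_\pr}=\lenmod R{N_\pr}$ at every $\pr$, and then I use (c) to produce a surjection that must be an isomorphism by \Rem{R:Vasc}.

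To carry out the first step, I would unpack (a) exactly as in the proof of \Lem{L:submod}. Writing $\mathfrak c(M,N)=\sum_i\ann R{\op{coker}(f_i)}$ as a finite sum, condition (a) forces this sum to escape $\assid RN=\sum_j\pr_j$ (where the $\pr_j$ are the associated primes of $N$), so some single $\ann R{\op{coker}(f_i)}$ is not contained in $\sum_j\pr_j$ and hence, by prime avoidance, contains an $N$-regular element $x$. This yields $f\colon M\to N$ with $xN\sub f(M)$, and the formula $a\mapsto xa$ defines an injection $N\into f(M)$. The key observation is that $N$-regularity of $x$ localizes: the associated primes of $N_\pr$ are exactly the $\pr_jR_\pr$ with $\pr_j\sub\pr$, and $x$ avoids all of these. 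Hence the same formula gives an injection $N_\pr\into f_\pr(M_\pr)\sub N_\pr$ for \emph{every} prime $\pr$, and \Thm{T:semadd}, together with the fact that $f_\pr(M_\pr)$ is a homomorphic image of $M_\pr$, yields
\[\lenmod R{N_\pr}\leq\lenmod R{f_\pr(M_\pr)}\leq\lenmod R{M_\pr}.\]
Symmetrically, (b) provides an $M$-regular $y$ and $g\colon N\to M$ with $yM\sub g(N)$, yielding $\lenmod R{M_\pr}\leq\lenmod R{N_\pr}$ at every $\pr$. Together, $\lenmod R{M_\pr}=\lenmod R{N_\pr}$ at every prime $\pr$.

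For the second step, I fix an arbitrary prime $\pr$ and invoke (c). Since $\mathfrak c(M,N)+\mathfrak c(N,M)=1$, at least one of the two summands must escape $\pr$; by the evident symmetry in $M$ and $N$, assume $\mathfrak c(M,N)\not\sub\pr$. Then some $h\colon M\to N$ satisfies $\ann R{\op{coker}(h)}\not\sub\pr$, so $\op{coker}(h_\pr)=0$ and $h_\pr\colon M_\pr\to N_\pr$ is surjective. Combined with $\lenmod R{M_\pr}\leq\lenmod R{N_\pr}$ from the previous step, \Rem{R:Vasc} forces $h_\pr$ to be an isomorphism, whence $M_\pr\cong N_\pr$.

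The one delicate point is the stalkwise refinement of \Lem{L:submod}: the lemma is stated globally, but reproducing its construction of $x$ and observing that $N$-regularity of $x$ transfers to $N_\pr$-regularity gives the needed inequality at every prime. Apart from this, the argument is a routine assembly of \Thm{T:semadd}, \Rem{R:Vasc}, and standard prime avoidance, with no further obstacles foreseen.
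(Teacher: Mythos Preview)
Your proof is correct and follows the same strategy as the paper's: use the construction from \Lem{L:submod} to compare lengths, extract a local surjection from condition~\eqref{i:1}, and conclude with \Rem{R:Vasc}. The only difference is organizational---you produce the regular elements $x,y$ globally from \eqref{i:MN}--\eqref{i:NM} and then localize them (using that regularity of a fixed element passes to stalks), whereas the paper localizes first and then invokes \Lem{L:submod} over $R_\pr$; your ordering makes the passage to stalks a bit more transparent.
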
 
\begin{proof}
Assume first that \eqref{i:MN}--\eqref{i:1} hold, and let $\pr\sub R$ be an arbitrary prime ideal. From \eqref{i:1}, it follows that $\pr$ does not contain some annihilator of a cokernel of a morphism (in either direction) between $M$ and $N$. Since the conditions are symmetric in $M$ and $N$, we may assume without loss of generality that we have a morphism  $f\colon M\to N$ such that $\ann R{\op{coker}(f)}$ does not contain $\pr$. This means that $f_\pr\colon M_\pr\to N_\pr$ is surjective. It is not hard to see that $\mathfrak c(N_\pr,M_\pr)$ contains $\mathfrak c(N,M)$ and $\assid{R_\pr}{M_\pr}$ is contained in $\assid RMR_\pr$. Hence by \eqref{i:NM}, we see that $\mathfrak c(N_\pr,M_\pr)$ is not contained in $\assid{R_\pr}{M_\pr}$, and so 
\begin{equation}\label{eq:MNpr}
\lenmod{R_\pr}{M_\pr}\leq\lenmod {R_\pr}{N_\pr}
\end{equation} 
 by \Lem{L:submod}. It now follows from \Rem{R:Vasc} that $f_\pr$ is an isomorphism.
%
%Conversely, if $M$ and $N$ are locally isomorphic, then $\assid{}M=\assid{}N$. Moreover, $\mathfrak c(M,N)$ and $\mathfrak c(N,M)$ are then both the unit ideal. 
\end{proof} 

\begin{remark}\label{R:lociso}
We may replace in \eqref{i:MN} and \eqref{i:NM} the ideals $\mathfrak c$ by the ideals $\mathfrak d$ defined on a pair of modules as the sum $\mathfrak d(M,N):=\mathfrak c(M,N)+\mathfrak k(N,M)$. Indeed, since the problem is local, we may localize $R$ so that $\pr$ is its maximal ideal.   If $\mathfrak k(M,N)$ is not contained in $\assid {}M$, then the second part of  \Lem{L:submod}  gives \eqref{eq:MNpr}, at least over the completion $\complet R$ of $R$. Hence $\complet f$ is an isomorphism, whence so is $f$   by faithfully flat descent.
\end{remark} 

\begin{corollary}\label{C:canc}
Let $M$ and $N$ be such that $\mathfrak c(M,N)+\mathfrak c(N,M)=1$. If there exists some $H$ such that $M\oplus H$ and $N\oplus H$ are locally isomorphic, then $M$ and $N$ are already locally isomorphic.
\end{corollary}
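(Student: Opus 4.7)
The plan is to work locally and show, for each prime ideal $\pr\sub R$, that $M_\pr\iso N_\pr$. By hypothesis~(iii) of \Thm{T:lociso} type (i.e., $\mathfrak c(M,N)+\mathfrak c(N,M)=1$), this sum ideal is not contained in $\pr$, hence at least one of $\mathfrak c(M,N)$ or $\mathfrak c(N,M)$ escapes $\pr$. By the symmetry of the hypothesis in $M$ and $N$, we may assume $\mathfrak c(M,N)\not\sub\pr$. Since $\mathfrak c(M,N)$ is a sum of ideals of the form $\ann R{\op{coker}(f)}$ for $f\colon M\to N$, and since $\pr$ is an ideal (so a sum lies in $\pr$ exactly when every element does), some single summand $\ann R{\op{coker}(f_0)}$ must already fail to lie in $\pr$. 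Therefore $\op{coker}(f_0)_\pr=0$, i.e.\ $(f_0)_\pr\colon M_\pr\onto N_\pr$ is surjective.

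To promote this surjection to an isomorphism I would invoke \Rem{R:Vasc}, which requires $\lenmod {R_\pr}{M_\pr}\leq\lenmod {R_\pr}{N_\pr}$. Here the assumption that $M\oplus H$ and $N\oplus H$ are locally isomorphic enters: it yields $M_\pr\oplus H_\pr\iso N_\pr\oplus H_\pr$. Applying the split-case equality in \Thm{T:semadd} to each side gives
\begin{equation*}
\lenmod {R_\pr}{M_\pr}\ssum\lenmod {R_\pr}{H_\pr}=\lenmod {R_\pr}{N_\pr}\ssum\lenmod {R_\pr}{H_\pr}.
\end{equation*}
Because all ordinals involved are less than $\omega^\omega$, their Cantor normal forms are finite and the shuffle sum $\ssum$ acts by coefficient-wise addition (this is the content of \Thm{T:ssum} in the appendix). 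Consequently $\ssum$ is cancellative, and we can strip $\lenmod {R_\pr}{H_\pr}$ from both sides to obtain $\lenmod {R_\pr}{M_\pr}=\lenmod {R_\pr}{N_\pr}$. Applying \Rem{R:Vasc} to the surjection $(f_0)_\pr$ then yields $M_\pr\iso N_\pr$, completing the argument at $\pr$; since $\pr$ was arbitrary, $M$ and $N$ are locally isomorphic.

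The only conceptually delicate point, and what I view as the main obstacle, is the cancellation step for $\ssum$. It would be tempting to try to deduce the result from \Thm{T:lociso} directly by checking conditions (a) and (b) about associated primes, but there is no obvious reason $\mathfrak c(M,N)$ should avoid $\assid RN$ after tensoring away $H$. Working prime-by-prime sidesteps this entirely, at the cost of needing the coefficient-wise description of the shuffle sum; once one admits that description, the proof reduces to the short chain of observations above.
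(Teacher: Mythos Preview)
Your proof is correct and follows essentially the same route as the paper's: localize at an arbitrary prime $\pr$, use $\mathfrak c(M,N)+\mathfrak c(N,M)=1$ to produce a surjection in one direction, use the split-case of semi-additivity on $M_\pr\oplus H_\pr\iso N_\pr\oplus H_\pr$ together with cancellativity of $\ssum$ to equate lengths, and then invoke \Rem{R:Vasc}. The paper's proof is terser but identical in substance; your explicit justification of the $\ssum$-cancellation via the Cantor normal form is a welcome clarification of a step the paper leaves implicit.
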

\begin{proof}
By assumption, any prime ideal $\pr$ does not contain either $\mathfrak c(M,N)$ or $\mathfrak c(N,M)$. Let us say the former holds. Localizing at $\pr$, we may assume $R$ is local and $N$ is a homomorphic image of $M$. Taking lengths, we get $\len M\ssum\len H=\len N\ssum \len H$ by semi-additivity, and hence $\len M=\len N$, whence $M\iso N$ by \Rem{R:Vasc}.
\end{proof}

\section{Appendix: shuffle sums}\label{s:ssum}

 Recall that neither addition nor multiplication  of ordinals is commutative. We  will give three different but equivalent ways of defining a different, commutative addition
operation on $\ord$, which we temporarily will denote as $\ssum$, $\bsum$ and
$\tsum$. The sum $\ssum$ is also known as the \emph{natural (Hessenberg)
sum} and is often denoted $\#$. Recall our convention of writing multiplication from left-to-right (see \S\ref{s:Ord}). Every ordinal $\alpha$ can be written as a sum
\begin{equation}\label{eq:CNF}
\alpha=a_n\omega^{\nu_n}+\dots+a_1\omega^{\nu_1}
\end{equation}
where the $\nu_i$ (called the \emph{exponents}) form a strictly ascending
chain of ordinals, that is to say, $\nu_1<\dots<\nu_n$, and the $a_i$ (called the
\emph{coefficients})
are non-negative integers. This decomposition (in base $\omega$) is unique if 
we moreover require that all coefficients $a_i$ are non-zero, called
the \emph{Cantor normal form} (\emph{in base $\omega$}) of $\alpha$. 
If \eqref{eq:CNF} is in Cantor normal form, then we call the highest (respectively, lowest) occurring exponent, the \emph{degree} (respectively,   the \emph{order}) of $\alpha$ and we denote these respectively by $\op{deg}(\alpha):=\nu_n$ and  $\low\alpha:=\nu_1$.
Note that $\alpha$ is a successor ordinal \iff\
$\low\alpha=0$.  
%We define the \emph{norm} of $\alpha$, denoted $\norm \alpha$, as the sum of all $a_i$ in the Cantor normal form~\eqref{eq:CNF}. 

Given a second
ordinal $\beta$, we may assume that
after possibly adding some more exponents, that it can also be written in the
form~\eqref{eq:CNF}, with coefficients $b_i\geq 0$ instead of the $a_i$. We now
define 
$$
\alpha\ssum\beta:=
(a_n+b_n)\omega^{\nu_n}+\dots+(a_1+b_1)\omega^{\nu_1}.
$$
It follows that $\alpha\ssum\beta$ is equal to $\beta\ssum\alpha$ and is
greater than or equal to both $\alpha+\beta$ and   $\beta+\alpha$. For
instance if $\alpha=\omega+1$ then $\alpha\ssum\alpha=2\omega+2$
whereas $\alpha+\alpha=2\omega+1$. In case both
ordinals are finite, $\alpha\ssum\beta=\alpha+\beta$. It is easy to check
that we have the following \emph{finite distributivity property}:
\begin{equation}\label{eq:findist}
(\alpha\ssum\beta)+1=(\alpha+1)\ssum\beta=\alpha\ssum(\beta+1).
\end{equation}
In fact, this follows from the more general property that
$(\alpha\ssum\beta)+\theta=(\alpha+\theta)\ssum\beta=
\alpha\ssum(\beta+\theta)$ for all $\theta<
\omega^{o+1}$, where $o$ is the minimum of $\low\alpha$ and
$\low\beta$.

For the second definition, we use transfinite induction on the pairs 
$(\alpha,\beta)$ ordered
lexicographically, that is to say, induction on the ordinal $\alpha\beta$.
Define $\alpha \bsum0:=\alpha$ and $0\bsum\beta:=\beta$ so that
we may assume $\alpha,\beta>0$.  If $\alpha$ is a
successor ordinal (recall that its predecessor is then denoted $\alpha-1$), then we define $\alpha\bsum\beta$ as
$((\alpha-1)\bsum\beta)+1$. Similarly, if $\beta$ is a successor
ordinal, then we define $\alpha\bsum\beta$ as $(\alpha\bsum(\beta-1))+1$.
Note that by transfinite induction,  both definitions agree when both $\alpha$ and $\beta$ are successor
ordinals, so that we have no ambiguity in defining this sum operation when at
least one of the components is a successor ordinal. So remains the case that
both are limit ordinals. If  $\low\alpha\leq\low\beta$, then we  let $\alpha \bsum \beta$
be equal to the supremum of the $\delta \bsum \beta$ for all
$\delta<\alpha$. In the remaining case, when  $\low\alpha>\low\beta$,   we  let $\alpha \bsum \beta$
be equal to the supremum of the $\alpha \bsum \delta$ for all
$\delta<\beta$. This concludes the definition of $\bsum$.
 
 Finally, define
$\alpha \tsum \beta$ as
the supremum of all sums $\alpha_1+\beta_1+\dots+\alpha_n+\beta_n$, where the
supremum is taken  over all $n$ and all decompositions
$\alpha=\alpha_1+\dots+\alpha_n$ and $\beta=\beta_1+\dots+\beta_n$. Loosely speaking,
$\alpha \tsum \beta$ is the
largest possible ordering one can obtain by   \emph{shuffling} pieces of 
$\alpha$ and $\beta$. Since we may take
$\alpha_1=0=\beta_n$, one checks that
$\alpha \tsum \beta=\beta \tsum \alpha$.

\begin{theorem}\label{T:ssum}
For all ordinals $\alpha,\beta$ we have
$\alpha\ssum\beta=\alpha\bsum\beta=\alpha\tsum\beta$.
\end{theorem}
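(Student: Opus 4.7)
\textbf{Proof plan for \Thm{T:ssum}.}

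I would prove the theorem by establishing three separate comparisons: $\alpha \ssum \beta \leq \alpha \tsum \beta$, $\alpha \tsum \beta \leq \alpha \ssum \beta$, and $\alpha \ssum \beta = \alpha \bsum \beta$.

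For $\alpha \ssum \beta \leq \alpha \tsum \beta$, the plan is to exhibit an explicit shuffle realizing the natural sum. Writing $\alpha$ and $\beta$ in Cantor normal form over a common list of exponents $\nu_k > \nu_{k-1} > \dots > \nu_1$ (padding with zero coefficients where needed), say $\alpha = a_k \omega^{\nu_k} + \dots + a_1 \omega^{\nu_1}$ and $\beta = b_k \omega^{\nu_k} + \dots + b_1 \omega^{\nu_1}$, I decompose $\alpha = \alpha_1 + \dots + \alpha_k$ with $\alpha_i := a_{k-i+1}\omega^{\nu_{k-i+1}}$, and similarly $\beta = \beta_1 + \dots + \beta_k$. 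The shuffle sum $\alpha_1 + \beta_1 + \alpha_2 + \beta_2 + \dots + \alpha_k + \beta_k$ then collapses, using $\omega^\nu m + \omega^\nu n = \omega^\nu(m+n)$ for finite $m,n$ together with the absorption of lower-order terms by larger ones, to $\sum_i (a_i + b_i)\omega^{\nu_i} = \alpha \ssum \beta$, which is witnessed by $\tsum$.

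The equality $\alpha \ssum \beta = \alpha \bsum \beta$ I would prove by transfinite induction on the lexicographic pair $(\alpha, \beta)$. The base case is immediate. If either coordinate is a successor, the recursive definition of $\bsum$ gives $(\alpha \bsum (\beta-1)) + 1$, which by the induction hypothesis equals $(\alpha \ssum (\beta - 1)) + 1$, and this is $\alpha \ssum \beta$ by the finite-distributivity identity~\eqref{eq:findist}. For the remaining case, both $\alpha,\beta$ are limit ordinals; after possibly exchanging them (noting that symmetry of $\ssum$ is built in, while symmetry of $\bsum$ will follow a posteriori from the identification), we assume $\low\alpha \leq \low\beta$, so $\alpha \bsum \beta = \sup_{\delta < \alpha}(\delta \bsum \beta) = \sup_{\delta<\alpha}(\delta \ssum \beta)$. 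From the CNF formula for $\ssum$ one verifies directly that this supremum equals $\alpha \ssum \beta$: the hypothesis $\low\alpha \leq \low\beta$ ensures that the coefficients of $\beta$ at exponents below $\deg\alpha$ are already absorbed in the CNF of $\delta\ssum\beta$ for $\delta$ close to $\alpha$, so raising $\delta$ to $\alpha$ continuously fills out the remaining high-exponent terms.

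For $\alpha \tsum \beta \leq \alpha \ssum \beta$, I would first prove the basic estimate $u + v \leq u \ssum v$ by transfinite induction on $v$ (successor step via \eqref{eq:findist}, limit step by monotonicity and passage to supremum). I then establish the sliding lemma $a + b + c \leq (a+c) \ssum b$ for all ordinals, by fixing $a$ and inducting on $c$ using \eqref{eq:findist} and the basic estimate. Iterating this lemma, any shuffle $S = \alpha_1 + \beta_1 + \alpha_2 + \beta_2 + \dots + \alpha_n + \beta_n$ can be rewritten by pushing each $\beta_i$ past all subsequent $\alpha_j$'s, yielding $S \leq (\alpha_1 + \alpha_2 + \dots + \alpha_n) \ssum \beta_1 \ssum \beta_2 \ssum \dots \ssum \beta_n$. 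Since $\alpha_1 + \dots + \alpha_n = \alpha$ by hypothesis and $\beta_1 \ssum \dots \ssum \beta_n$ can be evaluated using the already-established equality $\ssum = \bsum$ together with the CNF of the $\beta_i$'s — or more directly, using only $u + v \leq u \ssum v$ and commutativity of $\ssum$, can be bounded by $\beta$'s coefficient-wise sum — we conclude $S \leq \alpha \ssum \beta$.

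The main obstacle is the sliding lemma and the continuity argument at limits. The sliding lemma requires a careful transfinite induction that interacts delicately with the CNF of $b$, since pushing $b$ past $c$ adds coefficients rather than appending. The limit continuity for $\bsum = \ssum$ is subtle because it depends on which operand has the smaller order $\low{}$, and one must check that the case treated by the recursion ($\low\alpha \leq \low\beta$) is the one that actually realizes $\alpha \ssum \beta$ as a supremum; the complementary case is handled by the analogous argument for $\beta$-suprema.
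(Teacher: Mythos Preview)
Your treatments of $\alpha\ssum\beta\leq\alpha\tsum\beta$ and of $\alpha\ssum\beta=\alpha\bsum\beta$ are essentially the paper's own arguments (the explicit Cantor-normal-form shuffle, and the transfinite induction using \eqref{eq:findist} together with the cofinal family $\alpha'+\theta$ in the limit case).

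The gap is in your argument for $\alpha\tsum\beta\leq\alpha\ssum\beta$. Your sliding lemma $a+b+c\leq(a+c)\ssum b$ is correct, and iterating it does give
\[
S=\alpha_1+\beta_1+\dots+\alpha_n+\beta_n\ \leq\ (\alpha_1+\dots+\alpha_n)\ssum\beta_1\ssum\dots\ssum\beta_n\ =\ \alpha\ssum(\beta_1\ssum\dots\ssum\beta_n).
\]
But this does \emph{not} yield $S\leq\alpha\ssum\beta$, because in general $\beta_1\ssum\dots\ssum\beta_n>\beta$. The ordinary-sum decomposition $\beta=\beta_1+\dots+\beta_n$ may involve absorption, and the shuffle sum undoes that absorption. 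Concretely, take $\beta=\omega$ with $\beta_1=1$, $\beta_2=\omega$ (so $\beta_1+\beta_2=1+\omega=\omega$); then $\beta_1\ssum\beta_2=\omega+1>\omega$. With $\alpha=1$ your bound gives only $S\leq 1\ssum(\omega+1)=\omega+2$, whereas the target is $\alpha\ssum\beta=\omega+1$. Your final sentence (``can be bounded by $\beta$'s coefficient-wise sum'') is exactly the false step: the coefficients of the $\beta_i$ in Cantor normal form need not add up to those of $\beta$.

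The paper avoids this by not comparing with $\ssum$ at all in this step: it bounds each shuffle $S\leq\alpha\bsum\beta$ directly, by transfinite induction using the recursive definition of $\bsum$. One peels off the last nonzero piece (successor case: replace $\alpha_n$ by $\alpha_n-1$ and use $\bar\gamma=((\alpha-1)\bsum\beta)+1$; limit case: replace $\alpha_n$ by $\theta<\alpha_n$ and pass to the supremum). This works precisely because the recursion for $\bsum$ tracks $\alpha$ and $\beta$ themselves rather than an auxiliary quantity that can overshoot.
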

\begin{proof}
Let $\gamma:=\alpha\ssum\beta$, $\bar\gamma:=\alpha \bsum \beta$ and
$\tilde\gamma:=\alpha \tsum \beta$. 
We first prove $\gamma=\bar\gamma$ by induction on  
$\alpha\beta$. Since the case $\alpha=0$ or $\beta=0$ is trivial,  we may take
$\alpha,\beta>0$. If   $\alpha$ is a successor ordinal, then
\begin{equation*}%\label{eq:gbar}
\bar\gamma=((\alpha-1) \bsum \beta)+1=((\alpha-1)\ssum\beta)+1=\alpha\ssum\beta=\gamma,
\end{equation*} 
where the first equality is by definition, the second by
induction and the third by  the finite distributivity
property~\eqref{eq:findist}.  Replacing the role of $\alpha$ and $\beta$, the 
same argument can be used to treat the case when $\beta$ is a successor
ordinal. So we may assume that both are limit ordinals. There are again two 
cases, namely $\low\alpha\leq\low\beta$ and $\low\alpha>\low\beta$. By
symmetry, the
argument for the second case is similar as for the first, so we will only give
the details for the first case. Write $\alpha$ as $\alpha'+\omega^o$ where
$o:=\low\alpha$. 
By definition,
$\bar\gamma$ is the supremum of all $\delta \bsum \beta$ with
$\delta<\alpha$. A cofinal subset of such $\delta$
are the ones of the form $\alpha'+\theta$  
with $0<\theta<\omega^o$, so that $\bar\gamma$ is the
supremum of all $(\alpha'+\theta) \bsum \beta$ for $0<\theta<\omega^o$.
By induction, $\bar\gamma$ is the supremum of all 
\begin{equation}\label{eq:g'}
(\alpha'+\theta)\ssum\beta=(\alpha'\ssum\beta) +\theta,
\end{equation} 
where the equality holds because $o\leq \low\beta$.
Taking the supremum of the ordinals in \eqref{eq:g'} for $\theta<\omega^o$, we
get that $\bar\gamma=(\alpha'\ssum\beta)+\omega^o$. Using the remark following
\eqref{eq:findist} one checks that this is
just $(\alpha'+\omega^o)\ssum\beta=\alpha\ssum\beta=\gamma$. 

The inequality $\gamma\leq\tilde\gamma$ is clear using the shuffle of the
terms in the Cantor normal forms~\eqref{eq:CNF} for $\alpha$ and $\beta$.
To finish the proof,
we therefore need to show, by induction on $\alpha$, that
\begin{equation}\label{eq:g3}
\alpha_1+\beta_1+\dots+\alpha_n+\beta_n\leq\bar\gamma,
\end{equation}
for all decompositions $\alpha=\alpha_1+\dots+\alpha_n$ and
$\beta=\beta_1+\dots+\beta_n$. Since $\tsum$ is commutative, we may assume
$\low\alpha\leq\low\beta$ and, moreover,   that $\alpha_n>0$.  Suppose
first that $\alpha$ is a successor ordinal. In particular,
$\alpha_n$ is also a successor ordinal. By
definition, $\bar\gamma=((\alpha-1) \bsum \beta)+1$. Using
the decomposition $\alpha-1=\alpha_1+\dots+\alpha_{n-1}+(\alpha_n-1)$ and
induction, we get that 
$\alpha_1+\beta_1+\dots+\beta_n+(\alpha_n-1)\leq (\alpha-1) \bsum \beta$. Taking
successors of both ordinals then yields \eqref{eq:g3}. Hence suppose $\alpha$
is a limit ordinal. Let $\theta<\alpha_n$ and apply the induction to each 
$\delta:=\alpha_1+\dots+\alpha_{n-1}+\theta$, to get
$$
\alpha_1+\beta_1+\dots+\beta_{n-1}+\theta+\beta_n\leq \delta \bsum  \beta.
$$
Taking suprema of both sides then yields inequality  \eqref{eq:g3}. 
\end{proof}

We will denote this new sum simply by $\ssum$ and refer to
it as the \emph{shuffle sum} of two ordinals, in view of its third equivalent
form. 
 
\providecommand{\bysame}{\leavevmode\hbox to3em{\hrulefill}\thinspace}
\providecommand{\MR}{\relax\ifhmode\unskip\space\fi MR }
% \MRhref is called by the amsart/book/proc definition of \MR.
\providecommand{\MRhref}[2]{%
  \href{http://www.ams.org/mathscinet-getitem?mr=#1}{#2}
}
\providecommand{\href}[2]{#2}

%
%
%\bibliographystyle{amsplain}
%\bibliography{myabbrev,references}

\end{document}